\documentclass[11pt]{article}

\usepackage{amsmath,amssymb,amsthm,mathtools}
\usepackage{geometry}
\usepackage{graphicx}
\usepackage{hyperref}
\usepackage{enumitem}
\usepackage[numbers]{natbib}
\usepackage{notoccite}
\usepackage{todonotes}
\usepackage{tikz}
\usetikzlibrary{positioning,calc,arrows.meta}
\newtheorem{theorem}{Theorem}[section]
\newtheorem{proposition}[theorem]{Proposition}
\newtheorem{lemma}[theorem]{Lemma}
\newtheorem{corollary}[theorem]{Corollary}
\theoremstyle{definition}
\newtheorem{definition}[theorem]{Definition}
\newtheorem{example}[theorem]{Example}
\theoremstyle{remark}
\newtheorem{remark}[theorem]{Remark}
\newtheorem{claim}[theorem]{Claim}

\newcommand{\CP}{\mathbb{CP}}
\newcommand{\Z}{\mathbb{Z}}
\newcommand{\R}{\mathbb{R}}
\newcommand{\C}{\mathbb{C}}
\newcommand{\WCP}{\mathbb{CP}}
\newcommand{\st}{\mathcal{S}}
\newcommand{\e}{\mathfrak{e}}
\newcommand{\sto}{\mathfrak{o}}

\title{Symplectic log Kodaira dimension $-\infty$, Hirzebruch--Jung strings and weighted projective planes}
\author{Tian-Jun Li and Shengzhen Ning}
\AtEndDocument{\bigskip{\footnotesize
		\textsc{School of Mathematics, University of Minnesota, Minneapolis, MN, US} \par
		\textit{E-mail address}: \texttt{tjli@math.umn.edu} \par
		\addvspace{\medskipamount}
		\textsc{School of Mathematics, University of Minnesota, Minneapolis, MN, US} \par
		\textit{E-mail address}: \texttt{ning0040@umn.edu} \par
		
}}
\date{\today}

\begin{document}
\maketitle

\begin{abstract}
We study symplectic minimal resolutions of weighted projective planes $\mathbb{CP}(a,b,c)$ from the perspective of disconnected symplectic divisors with symplectic log Kodaira dimension $-\infty$. Building on the techniques developed in our previous work \cite{LN25} for connected divisors, we introduce the notion of exceptional gaps between distinct connected components of the divisor and use it to establish a Torelli-type theorem for certain configurations of three Hirzebruch--Jung strings. Motivated by Daigle--Russell's study of affine rulings on complete normal rational surfaces in algebraic context (\cite{DP1,DP2}), we also establish a weighted version of Gromov--McDuff's characterization of symplectic $\mathbb{CP}^2$ by showing the existence of symplectic affine rulings implies certain divisor configuration to arise from the minimal resolution of $\mathbb{CP}(a,b,c)$.
\end{abstract}

\tableofcontents

\section{Introduction}

\subsection{Background} 
Let $(X,\omega)$ be a closed symplectic $4$-manifold. A configuration $D=\cup C_i\subseteq (X,\omega)$ of closed embedded symplectic surfaces is called a \emph{symplectic divisor} if every intersection is positive, transverse, and $\omega$-orthogonal, and if there are no triple intersections. Symplectic divisors in dimension $4$ have been studied extensively, beginning with Gromov's characterization of symplectic $S^2\times S^2$ (\cite{Gromov}), where $D$ is the union of two spheres with self-intersection $0$, and McDuff's characterization of symplectic rational ruled manifolds (\cite{Mcduffrationalruled}), where $D$ is a single sphere of nonnegative self-intersection. They provide a flexible analogue of normal crossing divisors in the algebraic setting, making it possible to import ideas from algebraic geometry into a context where no specific integrable complex structure is fixed. Foundational work of Donaldson (\cite{Donaldson}) and Taubes (\cite{Taubes}) offers a rich source of symplectic divisors that exist in a wide range of settings. At the same time, they play a central role in symplectic surgery constructions such as Gompf's symplectic sum (\cite{Gompf}) and Fintushel--Stern's rational blowdown (\cite{FintushelStern}). Since they also arise naturally as resolutions of surface singularities with contact links, they are closely related to the study of symplectic fillings (\cite{Lisca,OhtaOno,GayStipsicz}). Moreover, the study of topology of symplectomorphism groups is also closely related to the understanding of symplectic divisors (\cite{Abreu,AbreuMcDuff}). For these reasons, symplectic divisors have become a basic object in the study of symplectic $4$-manifolds, and more intricate configurations have subsequently attracted considerable attention.

In the absolute case, the study of symplectic $4$--manifolds is organized by the symplectic Kodaira dimension, determined by the numerical invariants $K_\omega\cdot[\omega]$ and $K_\omega^2$ (\cite{MSsurvey,Likod1,Likod2}). Here, $K_\omega=-c_1(\omega)\in H^2(X;\mathbb{Z})$ denotes the symplectic canonical class and $[\omega]\in H^2(X;\mathbb{R})$ is the cohomology class of $\omega$. This invariant provides a coarse classification that guides the study within each class. It is therefore natural to seek an analogous framework in the presence of divisors. The first attempt towards defining symplectic log Kodaira dimension was made in \cite{Lizhanglogkod} by replacing $K_\omega$ in the absolute setting by the \emph{adjoint class} $K_\omega+[D]$:
\begin{equation}\label{equ:kod}\tag{\textdagger}
\kappa^s(X,\omega,D) = 
\begin{cases}
   -\infty & \text{if }[\omega]\cdot (K_\omega+[D])<0\text{ or }(K_\omega+[D])^2<0,\\
   0 & \text{if }[\omega]\cdot (K_\omega+[D])=0\text{ and }(K_\omega+[D])^2=0,\\
   1 & \text{if }[\omega]\cdot (K_\omega+[D])>0\text{ and }(K_\omega+[D])^2=0,\\
   2 & \text{if }[\omega]\cdot (K_\omega+[D])>0\text{ and }(K_\omega+[D])^2>0,
\end{cases}
\end{equation}
where $D$ is assumed to be a disjoint union of embedded symplectic surfaces with no spherical components and to be ``maximal'', in the sense that the complement $X\setminus D$ contains no exceptional spheres. When $D$ is allowed to have spherical components and intersecting components, following the philosophy of (\ref{equ:kod}), the cases $\kappa^s=0$ and $\kappa^s=-\infty$ were later studied in \cite{Limak,LMN22} and \cite{LN25}, respectively. In \cite{LN25}, we imposed the additional \emph{connectedness} assumption on the divisor, namely that the configuration $\cup C_i$ is connected as a subset of $X$. Our primary goal in this paper is to continue the study of symplectic divisors with $\kappa^s=-\infty$ which are possibly \emph{disconnected}, thereby taking step toward a more complete picture of classification. This focus is natural in view of the fact that symplectic $4$-manifolds within the class Kodaira dimension $-\infty$ are comparatively better understood than other classes in the absolute setting (\cite[Section 4.1]{Likod2}). Moreover, (\ref{equ:kod}) suggests that it is reasonable to begin with divisors of bounded symplectic area, namely those satisfying $[\omega]\cdot [D]\leq -[\omega]\cdot K_\omega$, before turning to the subtler case in which the symplectic area of $D$ is unrestricted.

\subsection{Symplectic weighted projective planes}

Fix pairwise coprime positive integers $(a,b,c)$.
The (complex) weighted projective plane is the projective  orbifold 
\[
 \CP(a,b,c)=\bigl(\C^{3}\setminus\{0\}\bigr)\big/\C^{*},
 \qquad
 \lambda\cdot(z_{0},z_{1},z_{2})=(\lambda^{a}z_{0},\lambda^{b}z_{1},\lambda^{c}z_{2}).
\]
From the symplectic viewpoint, we will think of $\CP(a,b,c)$ as a
symplectic toric orbifold obtained by symplectic reduction of $\C^{3}$ with its
standard symplectic form.
Let $\omega_{0}=\sum_{j=0}^{2} dx_{j}\wedge dy_{j}$ be the standard symplectic form on $\C^{3}$.
Consider the Hamiltonian $S^{1}$-action with weights $(a,b,c)$:
\[
 e^{it}\cdot(z_{0},z_{1},z_{2})=(e^{iat}z_{0},e^{ibt}z_{1},e^{ict}z_{2}).
\]
One choice of moment map is
\[
 \mu(z)=\frac12\bigl(a|z_{0}|^{2}+b|z_{1}|^{2}+c|z_{2}|^{2}\bigr)\in\R.
\]
For $\tau>0$, the reduced space
\(
 \mu^{-1}(\tau)\big/S^{1}
\)
is a symplectic orbifold which is naturally identified
with $\WCP(a,b,c)$ equipped with a symplectic form $\omega_{\tau}$. Different choices of $\tau$ change the symplectic form only by overall scaling. 
The coordinate points $[1:0:0]$, $[0:1:0]$ and $[0:0:1]$
have cyclic stabilizer groups of orders $a$, $b$, and $c$, respectively.
When $(a,b,c)$ are pairwise coprime, we can choose integers
$0<a_b,a_c<a$ such that
\[
a_b b\equiv c\pmod a,
\qquad
a_c c\equiv b\pmod a.
\]
 The cyclic quotient singularities at $[1:0:0]$  are said to be of type $\frac{1}{a}(1,a_b)$ or $\frac{1}{a}(1,a_c)$. By permutation of $(a,b,c)$, we similarly define
$b_a,b_c$ modulo $b$, $c_a,c_b$ modulo $c$ and the cyclic quotient singularity types at $[0:1:0]$, and $[0:0:1]$.

The residual $T^{2}$-action on the symplectic reduction makes $\WCP(a,b,c)$ a symplectic toric orbifold.
Its moment polytope is the labeled triangle
given by the subset of $(x,y,z)\in\R_{\ge 0}^{3}$ satisfying
\(
 a x+b y+c z=\tau,
\)
with vertices $(\frac{\tau}{a},0,0)$, $(0,\frac{\tau}{b},0)$ and $(0,0,\frac{\tau}{c})$ labeled by $a$, $b$ and $c$, respectively. We refer to \cite{lermantolman,Godinho,NP09} for more background on symplectic toric orbifolds.

\begin{remark}
  Throughout the paper, we assume that the weights $(a,b,c)$ are pairwise coprime, so that the complement of three coordinate points is a smooth symplectic manifold. If $\gcd(a,b,c)=d>1$, then one may simply replace $(a,b,c)$ by $(\frac{a}{d},\frac{b}{d},\frac{c}{d})$. When $\gcd(a,b,c)=1$ but the weights are not pairwise coprime, then $\WCP(a,b,c)$ is still well defined as an orbifold, but its orbifold locus is no longer isolated: along each coordinate line $\{z_i=0\}$, the generic stabilizer has order equal to the gcd of the other two weights. In this case, the $2$-form induced by symplectic reduction degenerates along three coordinate lines so that we only get a symplectic manifold by further removing these coordinate lines. Nevertheless, according to \cite[1.3.1]{Dolgachev}, if we set
  \[a':=\frac{a}{\gcd(a,b)\gcd(a,c)},\quad b':=\frac{b}{\gcd(b,a)\gcd(b,c)},\quad c':=\frac{c}{\gcd(c,a)\gcd(c,b)},\]
  then $(a',b',c')$ is pairwise coprime, and $\CP(a',b',c')$ is isomorphic to $\CP(a,b,c)$ as projective varieties, though not as orbifolds, induced by the branched covering map
  \[\C^3 \setminus \{0\}\rightarrow \C^3 \setminus \{0\},\qquad (z_0,z_1,z_2)\mapsto (z_0^{\gcd(b,c)},z_1^{\gcd(a,c)},z_2^{\gcd(a,b)}).\]
  In particular, even without the pairwise coprime assumption, the complement of the three coordinate points in $\CP(a,b,c)$ always carries a canonical complex structure.
\end{remark}

Given $\CP(a,b,c)$, one can resolve its orbifold singularities by replacing a neighborhood of each non-Delzant corner of its moment triangle with a collection of consecutive edges satisfying the Delzant condition (see Section \ref{section:WP}). This produces a Delzant polygon corresponding to a smooth rational manifold. The new edges determine a symplectic divisor consisting of three chains of embedded spheres. Such a divisor naturally lies in the class of symplectic divisors with log Kodaira dimension $-\infty$, because it sits inside a toric symplectic log Calabi--Yau divisor (see Section \ref{section:divisorintro}) and therefore satisfies the symplectic area condition in (\ref{equ:kod}).

Consider the simple case of the weighted projective plane $\CP(1,1,c)$, which has a unique orbifold singularity at $[0:0:1]$ of order $c$. A neighborhood of this singularity can be replaced by a neighborhood of the zero section of $\mathcal{O}(-c)$. In the toric picture (Figure \ref{fig:cp11c-truncation}), this amounts to truncating the non-Delzant corner of the triangle to obtain a trapezoid, thereby producing a section of self-intersection $-c$ in a symplectic Hirzebruch surface $(\mathbb{P}(\mathcal{O}\oplus\mathcal{O}(c)),\Omega)$. There is a stratification in the space $\mathcal{J}_{\Omega}$ of $\Omega$-tame almost complex structures whose stratum $U_k$ is a Fr\'{e}chet submanifold in $\mathcal{J}_{\Omega}$ characterized by the existence of an embedded $J$-holomorphic section of self-intersection $-k$. By \cite[Proposition 2.1, Corollary 2.8]{AbreuMcDuff}, the stratum $U_{c}$ is always nonempty and connected once the symplectic area of the $(-c)$-section class is positive. Consequently, any two embedded symplectic spheres of self-intersection $-c$ are symplectically isotopic, by choosing a family of almost complex structures in $U_{c}$. Therefore, by symplectic isotopy extension theorem (\cite[Proposition 4]{Auroux} or \cite[Proposition 0.2]{SiebertTian}), one obtains the following uniqueness result for symplectic divisors with the same configuration as the one arising from the resolution of $\CP(1,1,c)$.
\begin{proposition}[\cite{AbreuMcDuff}]\label{prop:AbreuMcDuff}
 Any two embedded symplectic spheres in the $(-c)$-section class of $(\mathbb{P}(\mathcal{O}\oplus\mathcal{O}(c)),\Omega)$ are symplectomorphic.
\end{proposition}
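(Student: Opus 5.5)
The plan is to follow the outline sketched just before the statement: first produce a symplectic isotopy between the two spheres, then upgrade it to a global symplectomorphism. Write $X=\mathbb{P}(\mathcal{O}\oplus\mathcal{O}(c))$ and let $S_0,S_1\subset (X,\Omega)$ be embedded symplectic spheres in the $(-c)$-section class $E$. Since they are symplectic, $\Omega(E)>0$.

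\textbf{Step 1: the spheres are $J$-holomorphic for structures in $U_c$.} For $i=0,1$ choose an $\Omega$-tame (even compatible) almost complex structure $J_i$ for which $S_i$ is $J_i$-holomorphic. This is standard: one extends a compatible structure on $TS_i$ and its symplectic normal bundle over a tubular neighborhood, then over all of $X$. By definition of the stratification, $J_i\in U_c$. I would also check that $S_i$ is the unique $J_i$-curve in class $E$. This follows from positivity of intersections, since $E\cdot E=-c<0$, so $J_i$ determines $S_i$.

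\textbf{Step 2: connect $J_0$ to $J_1$ inside $U_c$.} By \cite[Proposition 2.1, Corollary 2.8]{AbreuMcDuff}, the stratum $U_c$ is a nonempty, connected Fréchet submanifold once $\Omega(E)>0$. Take a smooth path $J_t\in U_c$, $t\in[0,1]$. For each $t$ let $S_t$ be the embedded $J_t$-holomorphic sphere in class $E$, which is unique by the same positivity argument.

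\textbf{Step 3: the spheres $S_t$ form a smooth isotopy.} Because $U_c$ is exactly the locus where the class $E$ is represented, no bubbling off into other classes occurs along the path. The curve $S_t$ is regular in the normal direction relative to $U_c$: $U_c$ has codimension $2(c-1)$, matching the cokernel of the linearized operator, whose normal bundle has degree $-c$. Moreover, the moduli space of unparametrized curves in class $E$ over the family $\{J_t\}$ is a smooth $1$-manifold mapping diffeomorphically onto $[0,1]$. Gromov compactness, together with uniqueness, gives continuity. The implicit function theorem in the transversal slice gives smoothness. Each $S_t$ is $J_t$-holomorphic, hence symplectic, so $\{S_t\}$ is a symplectic isotopy from $S_0$ to $S_1$.

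\textbf{Step 4: extend the isotopy.} Apply the symplectic isotopy extension theorem (\cite[Proposition 4]{Auroux} or \cite[Proposition 0.2]{SiebertTian}). This yields a Hamiltonian isotopy $\phi_t$ of $(X,\Omega)$ with $\phi_t(S_0)=S_t$. Then $\phi_1$ is a symplectomorphism taking $S_0$ to $S_1$.

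The main obstacle is Step 3, namely smoothness of the family $S_t$ when $c\ge 2$. For such $c$ the curves are not Fredholm-regular in the usual sense, so the argument must use the specific structure of $U_c$ from \cite{AbreuMcDuff}, which identifies $U_c$ as exactly the locus where such curves persist. If this becomes delicate, I would use a fallback: perturb the path slightly within $U_c$, and use compactness to get a continuous family of embedded spheres. A continuous symplectic isotopy can then be smoothed by a $C^0$-small adjustment, since being symplectic is an open condition.
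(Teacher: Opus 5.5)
Your proposal is correct and follows the paper's own route. You make each sphere $J$-holomorphic for some $J$ in the stratum $U_c$, join the two structures by a path in the connected stratum $U_c$ of Abreu--McDuff, follow the unique $J_t$-sphere in the $(-c)$-class to obtain a symplectic isotopy, and conclude with the symplectic isotopy extension theorem. Your Step 3 fills in details the paper leaves implicit, namely uniqueness via positivity of intersections, Gromov compactness, and the universal moduli space projecting diffeomorphically onto $U_c$. The fallback is not needed, but being symplectic is a $C^1$-open (not $C^0$-open) condition, so any smoothing there would have to be $C^1$-small.
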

 This naturally motivates the question of how such a uniqueness result extends to the resolution configuration for an arbitrary weighted projective plane $\CP(a,b,c)$.

\begin{figure}[htbp]
  \centering
  \begin{tikzpicture}[x=0.9cm,y=0.9cm,line cap=round,line join=round]
    \tikzset{
      poly/.style={draw=black!65, line width=0.9pt, fill=black!12},
      pt/.style={circle, fill=black!55, inner sep=1.7pt}
    }

    \begin{scope}
      \filldraw[poly] (0,0) -- (0,3.2) -- (6.4,0) -- cycle;
      \node[pt, label={[yshift=0.15cm]above:$c$}] at (0,3.2) {};
    \end{scope}

    \begin{scope}[xshift=9.1cm]
      \filldraw[poly] (0,0) -- (0,2.55) -- (1.25,2.55) -- (6.45,0) -- cycle;
      \draw[black!65, line width=0.9pt] (0,2.55) -- (1.25,2.55);
      \node at (0.62,3.0) {$-c$};
    \end{scope}
  \end{tikzpicture}
  \caption{In the toric picture, the moment triangle of $\CP(1,1,c)$ is truncated at the non-Delzant corner, labeled by its  singularity weight $c$, to produce the trapezoid corresponding to the Hirzebruch surface $\mathbb{P}(\mathcal{O}\oplus\mathcal{O}(c))$, and the new top edge represents the section of self-intersection $-c$.}
  \label{fig:cp11c-truncation}
\end{figure}

\subsection{Main Results}

In this paper, we focus on symplectic divisors that are disjoint unions of three strings.

\begin{definition}
  A \emph{symplectic string} is a connected symplectic divisor
  \(
    \mathcal{S}
  \)
  whose irreducible components are symplectic spheres arranged in a linear chain. An \emph{orientation} $\mathfrak{o}$ of $\st$ is a labeling of its components, written as $\st=\bigcup_{i=1}^{k} S_i$, such that $[S_i]\cdot[S_j]=1$ when $|i-j|=1$ and $[S_i]\cdot[S_j]=0$ otherwise. A \emph{symplectic Hirzebruch--Jung string} (HJ string) is a symplectic string with
  \(
    [S_i]^2\le -2 \text{ for all } i.
  \) 
\end{definition}

Given a string $\st=\bigcup_{i=1}^k S_i$, we call $S_1$ and $S_k$ its \emph{endpoints}, denote its length by $l(\st):=k$, and write $[\st]:=\sum_{i=1}^k [S_i]$ for its homology class. More generally, we will also use $l(D)$ and $[D]$ to denote the number of components and total homology class for any divisor $D$. Once an orientation $\mathfrak{o}$ of the string is chosen, we can assign a rational number by taking the (negative) continued fraction
\[
  r(\st,\sto):=[b_1,\dots,b_k]=b_1-\frac{1}{b_2-\frac{1}{\ddots-\frac{1}{b_k}}}>1,
\]
where $b_i=-[S_i]^2\geq 2$. The sequence $(b_1,\cdots,b_k)$ will be called the \emph{negative self-intersection sequence} of the string $\st$. If we write $r(\st,\sto)$ as $\frac{p}{q}$ for coprime positive integers $p>q$, then the assigned continued fraction when reversing the orientation will be given by $r(\st,-\sto)=\frac{p}{q'}$ for $0<q'<p$ with $qq'\equiv 1\pmod p$.

\begin{remark}
 When $r(\st,\sto)=\frac{p^2}{pq-1}$, the HJ string $\st$ is usually called a T-string. Such a string arises as the resolution of a Wahl singularity, namely a cyclic quotient singularity of type $\frac{1}{p^2}(pq-1,1)$, which appears naturally in $\mathbb{Q}$-Gorenstein degenerations. For the weighted projective plane $\CP(a^2,b^2,c^2)$, where $(a,b,c)$ satisfies the Markov equation $a^2+b^2+c^2=3abc$, the minimal resolution consists of three T-strings. Evans--Smith (\cite{evanssmithwahl}) studied T-strings from a symplectic perspective in symplectic manifolds with $b_2^+>1$ and Kodaira dimension $2$. Using Seiberg--Witten theory for manifolds with $b_2^+>1$, they obtained a beautiful upper bound on the length of a T-string in terms of the canonical class, improving the previous bound from algebraic geometry. See also \cite{evanssmithpinwheel} for related connections with Lagrangian pinwheel embeddings. In this sense, our paper can be viewed as the $b_2^+=1$ and $\kappa^s=-\infty$ counterpart to the work of Evans--Smith: we apply Seiberg--Witten theory in the $b_2^+=1$ setting and treat the more general class of HJ strings.
\end{remark}

Let $(X,\omega)$ be a closed symplectic $4$-manifold. We use $\mathcal{E}_\omega$ to denote the set of its symplectic exceptional classes. Let $D\subseteq (X,\omega)$ be a possibly disconnected symplectic divisor with connected components $D_1,\cdots,D_n$. We introduce the set of \emph{$D$-log exceptional classes} to be 
\[\mathcal{E}^{\text{log}}_{\omega}(D):=\{E\in\mathcal{E}_\omega\,|\,E\cdot[D_\alpha]\geq 0\text{ for any }1\leq \alpha\leq n\},\]where $[D_\alpha]$ denotes the total homology class of the connected component $D_\alpha$. We do \emph{not} require $E\in \mathcal{E}_\omega^{\text{log}}(D)$ to pair non-negatively with each irreducible component of $D_\alpha$. For any two connected components $D_i,D_j$, consider the subset of their \emph{connecting log exceptional classes} defined as
\[\mathcal{E}^{\text{log}}_\omega(D_i,D_j;D):=\{E\in\mathcal{E}^{\text{log}}_{\omega}(D)\,|\, E\cdot[D_i]\geq1, E\cdot[D_j]\geq 1\}.\]
 We introduce the notion of
\emph{exceptional gap}\footnote{Conceptually, the exceptional gap can be thought as a ``naive" analogue of certain SFT-type spectral numbers defined on the divisor complement using asymptotically cylindrical punctured pseudoholomorphic curves (\cite{MScapacity}).} between $D_i$ and $D_j$ by 
\[
\mathfrak{e}_\omega(D_i,D_j;D):=
\begin{cases}
\sup\{\omega(E)\,|\,E\in\mathcal{E}^{\text{log}}_\omega(D_i,D_j;D)\}, & \mathcal{E}^{\text{log}}_\omega(D_i,D_j;D)\neq\varnothing,\\
0, & \mathcal{E}^{\text{log}}_\omega(D_i,D_j;D)=\varnothing.
\end{cases}
\]
When the symplectic form is clear from context, we omit $\omega$ and write $\mathcal{E}^{\text{log}}(D),\mathcal{E}^{\text{log}}(D_i,D_j;D)$ and $\mathfrak{e}(D_i,D_j;D)$. 

\begin{figure}[htbp]
  \centering
  \begin{tikzpicture}[x=1cm,y=1cm, line cap=round, line join=round]
    \tikzset{
      dcomp/.style={draw=black, line width=1.1pt},
      elog/.style={draw=red!80!black, line width=1.1pt}
    }

    \coordinate (Aint) at (1.35,-0.02);
    \coordinate (Aintb) at (1.00,0.31);
    \draw[dcomp] (0,0.3) -- (3,0.3);
    \draw[dcomp] (0.3,0.7) -- (2,-1.1);
    \draw[dcomp] (1.2,-1.1) -- (2.5,0.7);

    \coordinate (Bint) at (4.24,0.03);
    \coordinate (Bintb) at (4.93,-0.36);
    \coordinate (Bintc) at (5.60,-0.36);
    \coordinate (Bintd) at (4.91,0.03);
    \draw[dcomp] (4.30,-1.2) -- (4.30,0.5);
    \draw[dcomp] (4.9,-1.2) -- (4.90,0.5);
    \draw[dcomp] (5.5,-1.2) -- (5.5,0.5);
    \draw[dcomp] (3.5,0) -- (6.2,0);
    \draw[dcomp] (3.5,-0.4) -- (6.2,-0.4);
    \draw[dcomp] (5.2,-0.6) -- (6.2,-0.6);
    \draw[dcomp] (5.2,-0.8) -- (6.2,-0.8);

    \coordinate (Cint) at (7.74,0.55);
    \coordinate (Cintb) at (9.67,0.57);
    \coordinate (Cintc) at (9.66,-0.29);
    \coordinate (Cintd) at (8.58,0.56);
    \coordinate (Cinte) at (9.66,-0.08);
    \draw[dcomp] (7.8,-1.2) -- (7.8,1);
    \draw[dcomp] (9.6,-1.2) -- (9.6,1);
    \draw[dcomp] (8.6,0.9) -- (8.6,0.4);
    \draw[dcomp] (7.30,0.6) -- (10.15,0.6);
    \draw[dcomp] (7.30,-0.8) -- (10.15,-0.8);
    \draw[dcomp] (9.35,0.1) -- (10.5,0.1);

    \coordinate (Dint) at (12.53,0.10);
    \draw[dcomp] (12,0.5) -- (13,1);
    \draw[dcomp] (12,0) -- (13,0.5);
    \draw[dcomp] (12,-0.5) -- (13,0);
    \draw[dcomp] (12,0.7) -- (13,0.2);
    \draw[dcomp] (12,0.2) -- (13,-0.3);
    \draw[dcomp] (12,-0.3) -- (13,-0.8);

    \node[below] at (1.65,-1.1) {$D_1$};
    \node[below] at (5.1,-1.1) {$D_2$};
    \node[below] at (8.8,-1) {$D_3$};
    \node[below] at (12.75,-0.88) {$D_4$};

    \draw[elog]
       (1,0.20)
      .. controls (4.8,1.05) and (7.0,0.55) .. (8.50,0.20);

    
    \node[red!80!black, above] at (4.8,0.9) {$C_E$};
  
  \end{tikzpicture}
  \caption{A schematic curve configuration: the divisor $D=D_1\cup D_2\cup D_3\cup D_4$ (black) and a connecting $D$-log exceptional curve $C_E$ (red) intersecting exactly the two components $D_1$ and $D_3$. Here $E\in\mathcal{E}_{\omega}^{\mathrm{log}}(D_1,D_3;D)$, and the exceptional gap $\e_\omega(D_1,D_3;D)$ is the supremum of the symplectic areas of all such classes.}
  \label{fig:log-exceptional-configuration}
\end{figure}

\begin{definition}\label{def:3assumptions}
The disjoint union of three symplectic HJ strings $D=\st_1\cup\st_2\cup\st_3\subseteq (X,\omega)$ is called
   \begin{itemize}[nosep]
  \item \emph{full}, if $l(\st_1)+l(\st_2)+l(\st_3)=b_2^-(X)$;
  \item \emph{of $(a,b,c)$-type}, if there are pairwise coprime $(a,b,c)$ such that $r(\st_1,\sto_1)=\frac{a}{a_b}$, $r(\st_2,\sto_2)=\frac{b}{b_c}$, $r(\st_3,\sto_3)=\frac{c}{c_a}$ for some orientations $\sto_1,\sto_2,\sto_3$ on $\st_1,\st_2,\st_3$;
    \item \emph{gap-admissible}, if $[\omega]\cdot (K_\omega+[D])<-\e(\st_i,\st_j;D)-\e(\st_i,\st_k;D)$ for any $\{i,j,k\}=\{1,2,3\}$; 
    \item \emph{sub-toric}, if there exists a toric structure on $(X,\omega)$ with moment map $\mu\colon X\to \mathcal{P}$ such that $D\subseteq \mu^{-1}(\partial\mathcal{P})$.
  \end{itemize}
\end{definition}

The gap-admissible condition implies, according to (\ref{equ:kod}), that $D$ lies within the class $\kappa^s=-\infty$. The following is our main result.

\begin{theorem}\label{thm:main}
  Let $D=\st_1\cup\st_2\cup\st_3\subseteq (X,\omega)$ be a symplectic divisor consisting of three disjoint HJ strings which is full and of $(a,b,c)$-type. Then $D$ is sub-toric if and only if $D$ is gap-admissible. In particular, $D$ must arise from the symplectic minimal resolutions of weighted projective plane $\CP(a,b,c)$ under the gap-admissible assumption.
\end{theorem}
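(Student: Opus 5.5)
I would prove the two implications separately. The direction ``sub-toric $\Rightarrow$ gap-admissible'' is a computation. The converse is the substantive part, and I would model it on the Torelli-type strategy of \cite{LN25} for connected divisors.

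\textbf{Sub-toric $\Rightarrow$ gap-admissible.} Suppose $D\subseteq\mu^{-1}(\partial\mathcal P)$. Fullness and the toric Euler characteristic count $b_2(X)+2=\#\text{edges}$. Together with the fact that $D$ consists of three disjoint chains of edges, this forces the complement $\partial\mathcal P\setminus D$ to consist of exactly three edges $F_1,F_2,F_3$, each joining two of the strings. By the $(a,b,c)$-type assumption, collapsing the strings recovers the weighted triangle of $\CP(a,b,c)$.

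Since $[\partial\mathcal P]=-K_\omega$, we have $K_\omega+[D]=-[F_1]-[F_2]-[F_3]$. Hence
\[
[\omega]\cdot(K_\omega+[D])=-\omega(F_1)-\omega(F_2)-\omega(F_3).
\]
It then suffices to show that $\e(\st_i,\st_j;D)\le\omega(F_{ij})$, where $F_{ij}$ is the edge joining $\st_i$ and $\st_j$. The strict inequality is then supplied by the third edge, which has positive area.

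To bound $\e(\st_i,\st_j;D)$, take $E\in\mathcal E^{\log}(\st_i,\st_j;D)$. I would realize $E$ by a $J$-holomorphic curve for a generic $J$ that makes the toric boundary holomorphic (exceptional classes always have embedded representatives), and pair it against the toric divisors. The relation $E\cdot[\st_k]\ge0$, together with $E\cdot(-K)=1$, gives
\[
E\cdot[F_1]+E\cdot[F_2]+E\cdot[F_3]=1-\sum_\alpha E\cdot[\st_\alpha]\le -1.
\]
So $E$ pairs negatively with some $F$, and positivity of intersections forces $E$ to contain a boundary component. Iterating, or more cleanly decomposing $E$ in terms of the edge classes via the toric basis, I would bound $\omega(E)\le\omega(F_{ij})$. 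I expect this to follow from writing $\omega(E)$ through the moment-polygon affine structure.

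\textbf{Gap-admissible $\Rightarrow$ sub-toric.} This is the main direction, and I would proceed in four steps.

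\begin{enumerate}
\item Show that $X$ is rational. The inequality $[\omega]\cdot(K_\omega+[D])<0$ gives $\kappa^s=-\infty$, and the results of \cite{LN25} (Seiberg--Witten wall-crossing for $b_2^+=1$) then imply that $X$ is rational or ruled. A ruled irrational $X$ is excluded because the HJ spheres would have to lie in fibers, contradicting fullness.
\item Construct three exceptional curves $C_{12},C_{13},C_{23}$, with $C_{ij}$ meeting only $\st_i$ and $\st_j$, each at an endpoint and once. I would use SW/Gromov-invariance of $K+[D]+$(suitable classes) in $b_2^+=1$ to obtain a $J$-holomorphic curve in the class $-(K_\omega+[D])$ for $J$ making $D$ holomorphic. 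The gap-admissibility inequality is designed exactly so that, when this curve decomposes, each piece must connect two strings. A component of area at most $\e(\st_i,\st_j;D)$ cannot absorb the whole area of $-(K_\omega+[D])$, so the third connection must also appear. Adjunction and the $(a,b,c)$-type numerics (the continued fractions $a/a_b$ etc.) then pin down the intersection pattern.
\item Observe that $D\cup C_{12}\cup C_{13}\cup C_{23}$ is a cycle of spheres in class $-K$, so it is a symplectic log Calabi--Yau divisor whose self-intersection sequence is that of the toric resolution of $\CP(a,b,c)$.
\item Apply the toric-realization result for log Calabi--Yau cycles (Section \ref{section:divisorintro}, from \cite{LMN22}), together with symplectic isotopy extension as in Proposition \ref{prop:AbreuMcDuff}, to conclude that $D$ is sub-toric.
\end{enumerate}

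The main obstacle is step 2: controlling the degeneration of the curve in class $-(K+[D])$ so that no component is multiply covered or contained in $D$, and converting the gap inequality into the existence of all three connecting exceptional curves.
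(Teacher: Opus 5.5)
Your sub-toric $\Rightarrow$ gap-admissible direction follows the paper (Proposition~\ref{prop:onlyif}), and $E\cdot([F_1]+[F_2]+[F_3])\le -1$ is exactly the key inequality. To finish it, you do not need ``iterating'' or polygon area computations. Use instead the pairing fact $E\cdot[F_k]\ge 0$ for every $E\in\mathcal{E}_\omega\setminus\{[F_k]\}$. When $[F_k]^2=-1$ this is \cite[Lemma 2.1]{Pinsonnault}. When $[F_k]^2\ge 0$, take $J$ making $F_k$ holomorphic and write a $J$-subvariety in class $E$ as $mF_k+R$; then $E\cdot[F_k]=m[F_k]^2+R\cdot[F_k]\ge 0$. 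Hence $E=[F_k]$ for some $k$, and the connecting condition forces $F_k=F_{ij}$. Note that for $J$ making the whole toric boundary holomorphic, $E$ need not have an irreducible representative, so your parenthetical remark does not apply there.

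The converse has a genuine gap: Step 2 aims at something false in general, because the third connecting curve need not be exceptional. The number $(K_\omega+[D])^2=3-3b_2^-(X)-\sum_{C\subseteq D}[C]^2$ is fixed by the string data. A cycle $D\cup C_{12}\cup C_{13}\cup C_{23}$ in class $-K_\omega$ with all $C_{ij}$ exceptional would force it to equal $-3$, whereas Corollary~\ref{cor:3nminus6} only gives $\ge -3$. Take the resolution of $\CP(2,3,5)$: strings $(-2)$, $(-3)$, $(-2,-2,-2,-2)$ in $\CP^2\#6\overline{\CP}^2$. There the number is $-2$, because the edge $N_c$ joining $\st_a$ and $\st_b$ has $[N_c]^2=0$. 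In fact $\mathcal{E}^{\text{log}}_\omega(\st_a,\st_b;D)=\varnothing$: any $E\neq[N_a],[N_b]$ has $E\cdot[D]\le 1$ from $1=E\cdot([D]+[N_a]+[N_b]+[N_c])$, while $[N_a]\cdot[\st_a]=[N_b]\cdot[\st_b]=0$. This $D$ is sub-toric, hence gap-admissible, so no decomposition of a curve in class $-(K_\omega+[D])$ can produce three exceptional $C_{ij}$. This is also why the gap-admissible inequality involves only two gaps sharing an index. The correct structure is two exceptional spheres sharing a string ($C_{E_{\min}}$ and $C_e$), after which $[\omega]\cdot(K_\omega+[\overline{\overline{D}}])<0$ holds precisely by gap-admissibility. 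The last piece is then an embedded sphere in class $-K_\omega-[\overline{\overline{D}}]$ of square $\ge -1$. Its existence needs a separate argument: reduce to a quasi-minimal pair or a Hirzebruch surface, represent the class there, transport it back via \cite{DLW}, and apply McDuff--Opshtein.

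Even after that correction, your Step 2 gives no mechanism for the facts the proof actually rests on:
\begin{itemize}
\item there are no exterior or non-toric exceptional spheres relative to $D$ (fullness, plus the fiber-class degeneration argument of Proposition~\ref{prop:nonontoric});
\item $E_{\min}$ meets exactly two strings, once each (the quasi-minimal theory of \cite{LN25}, Lemma~\ref{lem:-2chains}, fullness, and gap-admissibility to exclude meeting all three);
\item a second connection exists (the $\Xi$-count of Proposition~\ref{prop:secondexceptional});
\item the connecting spheres attach at string endpoints, so $\overline{\overline{D}}$ is a string rather than a tree with a branching component.
\end{itemize}
The last point is the crux. Adjunction and the continued-fraction numerics do not by themselves stop a connecting sphere from meeting an interior component of a string. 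The paper rules this out in Proposition~\ref{prop:isstring}, using Lemma~\ref{lem:-2chains} and the classification of sphere classes in Hirzebruch surfaces (Claim~\ref{claim:sphereinHirzebruch}).
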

We refer to Section \ref{section:WP} for the notion of minimal resolution in the symplectic sense. Moreover, by Remark \ref{rmk:chen}, there is no need to specify the symplectic structure on $\CP(a,b,c)$ up to overall rescaling. Thus, this theorem characterizes those disconnected symplectic divisors with symplectic log Kodaira dimension $-\infty$ that arise as minimal resolutions of weighted projective planes.
Since every symplectic toric manifold admits a compatible K\"ahler structure, Theorem \ref{thm:main} also immediately implies that any divisor satisfying its hypotheses is K\"ahler; namely, there exists an $\omega$-compatible integrable complex structure $J$ on $X$ for which $D$ is a complex divisor in $(X,J)$. It is also worth emphasizing that the characterization above is entirely (co)homological.
It turns out that these symplectic divisors satisfy a Torelli-type statement: the associated pairs are determined up to symplectomorphism by their (co)homological data. The following can be viewed as a generalization of Proposition \ref{prop:AbreuMcDuff}.

\begin{corollary}[Torelli Theorem]\label{cor:torelli}
  Let $D=\st_1\cup\st_2\cup\st_3\subseteq (X,\omega)$ and $D'=\st'_1\cup\st'_2\cup\st'_3\subseteq (X',\omega')$ be symplectic divisors, each consisting of three disjoint HJ strings that are both full and of $(a,b,c)$-type. Suppose that $[\omega']\cdot K_{\omega'}<0$ and $D$ is sub-toric. Then
  \begin{itemize}[nosep]
    \item if there is an integral isometry $\gamma$ between the second (co)homologies of $X$ and $X'$ that sends $[\omega]$ to $[\omega']$ and $[\st_i]$ to $[\st_i']$ for each $i$, then $D'$ is also sub-toric;
    \item if, in addition, $\gamma$ preserves the class of each component of $D$ and $D'$, then $(X',\omega',D')$ is symplectomorphic to $(X,\omega,D)$.
  \end{itemize}
\end{corollary}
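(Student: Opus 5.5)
The plan is to derive both parts from Theorem \ref{thm:main}. The key point is that every quantity entering the gap-admissible condition is determined by data that $\gamma$ preserves.

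\emph{First part.} Since $D$ is sub-toric, Theorem \ref{thm:main} shows that $D$ is gap-admissible. I would transport this inequality to $D'$ through $\gamma$. The condition involves three kinds of data.
\begin{itemize}[nosep]
\item The classes $[\omega]$ and $[D]$: $\gamma$ sends these to $[\omega']$ and $[D']$ by hypothesis.
\item The canonical class: I must show $\gamma(K_\omega)=K_{\omega'}$.
\item The exceptional sets: I must show $\gamma(\mathcal{E}_\omega)=\mathcal{E}_{\omega'}$.
\end{itemize}

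For the canonical class, I would first note that $[\omega']\cdot K_{\omega'}<0$ forces $(X',\omega')$ to be rational or ruled, by Liu and Ohta--Ono. Fullness together with $b_2^+(X')=b_2^+(X)=1$ and the presence of three disjoint HJ strings should rule out irrational ruled surfaces, since these lie in $X$, which is toric and hence rational. So both manifolds are rational.

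For rational manifolds, Li--Liu's uniqueness of the symplectic canonical class up to diffeomorphism then gives the following. Any isometry $\gamma$ with $\gamma[\omega]=[\omega']$ can be composed with a reflection in the stabilizer of $[\omega']$ so as to send $K_\omega$ to $K_{\omega'}$. Preferably I would argue that this holds directly: $K$ is characterized among characteristic classes $\kappa$ with $\kappa^2=K^2$ and $\kappa\cdot[\omega]<0$ whose associated exceptional-type classes all have positive area. This characterization is the delicate step.

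Granting $\gamma(K_\omega)=K_{\omega'}$, the exceptional sets correspond. Indeed, $\mathcal{E}_\omega$ consists of classes $E$ with $E^2=-1$, $K\cdot E=-1$ and $E$ represented by an embedded $\omega$-sphere. For rational manifolds this is equivalent to positivity of $\omega(E)$ together with membership in the $K$-exceptional orbit (Li--Liu), so $\gamma$ maps $\mathcal{E}_\omega$ onto $\mathcal{E}_{\omega'}$. Because $\gamma$ also sends each $[\st_i]$ to $[\st_i']$, it maps $\mathcal{E}^{\text{log}}_\omega(\st_i,\st_j;D)$ onto $\mathcal{E}^{\text{log}}_{\omega'}(\st_i',\st_j';D')$ and preserves areas. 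Hence all exceptional gaps agree, and the adjoint pairing $[\omega]\cdot(K_\omega+[D])$ is preserved. Therefore $D'$ is gap-admissible, and Theorem \ref{thm:main} gives that $D'$ is sub-toric.

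\emph{Second part.} Both divisors are now sub-toric, and by Theorem \ref{thm:main} each arises from a symplectic minimal resolution of $\CP(a,b,c)$. Since $\gamma$ preserves each component class, the self-intersections agree, and so do the areas $\omega(S_i)=\omega'(S_i')$. In the toric pictures, the moment polygons are obtained from the labeled triangle by truncating its corners, and the lengths of the new edges equal these areas. A Delzant polygon is determined up to $GL(2,\Z)$ and translation by its edge normals and edge lengths. The resolution strings fix the normals of the new edges. The three remaining edges, namely the proper transforms of the coordinate lines, have areas determined by $[\omega]$ paired with their classes, which are expressible in terms of $[D]$ and $K$. Hence the two polygons coincide, and Delzant's theorem provides an equivariant symplectomorphism $(X,\omega)\to(X',\omega')$ taking $D$ to $D'$ componentwise.

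\emph{Main obstacle.} I expect the hardest step to be showing that $\gamma$ intertwines the canonical classes, so that exceptional classes correspond. If the direct characterization fails, I would fall back on the following. The Torelli theorem for rational surfaces says that the stabilizer of $[\omega']$ acts transitively on possible canonical classes. After composing $\gamma$ with such an element, I would check that the modified map still fixes each $[\st_i']$; this holds because the correcting reflections are in $K$-orthogonal $(-2)$-classes of area zero, which pair trivially with the string classes since the divisor is full.
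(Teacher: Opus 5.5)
Your first part has the same structure as the paper's proof: transport gap-admissibility through $\gamma$, then apply Theorem \ref{thm:main}. The gap is the step you flag yourself, $\gamma(K_\omega)=K_{\omega'}$. It is not established, and your fallback cannot work. A reflection $R_w(x)=x+(x\cdot w)w$ in a $(-2)$-class $w$ with $K_{\omega'}\cdot w=0$ satisfies $R_w(K_{\omega'})=K_{\omega'}$. So composing $\gamma$ with such reflections never changes the image of $K_\omega$, and if $\gamma(K_\omega)\neq K_{\omega'}$ they cannot repair it. The further claim that they pair trivially with the $[\mathcal{S}_i']$ ``since the divisor is full'' is also unsupported. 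Li--Liu's uniqueness of the canonical class \emph{up to diffeomorphism} is too weak here, because the diffeomorphism it produces need not respect $[\omega]$ or the string classes. Your ``direct characterization'' of $K$ is left unproved.

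The missing idea is that no correction is needed. Realize $\gamma$ by an orientation-preserving diffeomorphism $\phi$ (Wall's theorem, as in \cite[Theorem 2.8]{LiLiAsian}). Then $\phi^*\omega'$ is a symplectic form on $X$ cohomologous to $\omega$. Cohomologous symplectic forms have the same canonical class \cite[Corollary A]{Salamon}, so $\gamma$ itself sends $K_\omega$ to $K_{\omega'}$. Since $\gamma$ is now induced by a diffeomorphism, smooth representability by spheres is preserved as well. Your appeal to \cite[Theorem A]{LiLiu96} to match $\mathcal{E}_\omega$ with $\mathcal{E}_{\omega'}$, and the transfer of gap-admissibility, then go through as in the paper.

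Two smaller points. First, your argument that $X'$ is rational is circular: the strings $\mathcal{S}_i'$ lie in $X'$, not in $X$. Fullness does the job instead. In an irrational ruled manifold every embedded sphere pairs trivially with the fiber class $F$. The lattice $F^\perp$ has rank $b_2^-$ with $F$ in its radical, so it contains no negative definite sublattice of rank $b_2^-$, as in Lemma \ref{lem:fibercomplement}.

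Second, in the second item, using Delzant's classification directly instead of Theorem \ref{thm:LCYtorelli} is a legitimate alternative route; it is essentially the toric case of that theorem. However, matching the lengths of the three inherited edges requires $\gamma[N_\bullet]=[N_\bullet']$, and ``expressible in terms of $[D]$ and $K$'' is not a justification. The correct reason is Proposition \ref{prop:onlyif}. The classes $[N_a]$ and $[N_b]$ are the unique elements of their connecting log exceptional sets, and $\gamma$ matches these sets once $\gamma(\mathcal{E}_\omega)=\mathcal{E}_{\omega'}$. The third class is then $[N_c]=-K_\omega-[D]-[N_a]-[N_b]$. So this step also rests on $\gamma(K_\omega)=K_{\omega'}$.
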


This corollary will be deduced from the Torelli theorem (Theorem \ref{thm:LCYtorelli}) for symplectic log Calabi--Yau pairs, introduced in Section \ref{section:divisorintro}. The novelty of the first item in Corollary \ref{cor:torelli} is that it requires only the total homology classes of the three HJ strings to agree, rather than the homology classes of their individual irreducible components as the second item.



Other than imposing the area conditions on $D$ discussed above, one can characterize the minimal resolutions of symplectic weighted projective planes in terms of the existence of symplectic rulings. On the algebraic side, a theorem of Daigle--Russell \cite{DP2} (see also \cite{DP1}) asserts that if a complete normal affine-ruled rational surface $X$ has smooth locus $X_s$ of Picard rank one and has the same resolution graph as $\CP(a,b,c)$, then $X$ is isomorphic to $\CP(a,b,c)$. Here, affine-ruledness means that $X_s$ contains a Zariski-open dense subset isomorphic to the product of a curve and the affine line. On the symplectic side, we defined and verified the symplectic version of affine-ruledness in our previous work \cite{LN25} for divisor complement of any connected symplectic divisor with $[\omega]\cdot(K_\omega+[D])<0$ by searching for suitable singular symplectic curves foliating the divisor complement. Recall that $C\subseteq (X,\omega)$ is a \emph{symplectic $(p,q)$-unicuspidal curve} if there exists a unique singular point $x\in C$ locally modeled on the $(p,q)$-cusp singularity of the algebraic curve $\{x^p+y^q=0\}\subseteq \C^2$, which is otherwise an embedded symplectic submanifold (\cite[Definition 3.5.1]{McduffSiegel}). It is called \emph{rational} if it admits a parametrization by a smooth sphere. Given a symplectic divisor $D$ with two intersecting components $D_p$ and $D_q$, we say that $C$ \emph{has a $(p,q)$-cusp} at $D_p\cap D_q$ if its unique singular point is $x=D_p\cap D_q$ and if $C$ has contact orders $p$ and $q$ with $D_p$ and $D_q$, respectively (\cite[Section 2.2]{MScounting}). The following theorem may be viewed as a symplectic analogue of the Daigle--Russell's result.

\begin{theorem}\label{thm:ruling}
Let $D=\st_1\cup\st_2\cup\st_3$ be a symplectic divisor consisting of three disjoint HJ strings in a symplectic $4$-manifold $(X,\omega)$. Suppose $D$ is full and of $(a,b,c)$-type. Then $D$ is sub-toric if and only if there exists $C\subseteq (X,\omega)$ which is
\begin{itemize}[nosep]
  \item either a symplectic $(p,q)$-unicuspidal rational curve with $[C]^2=pq$, whose $(p,q)$-cusp is at the intersection of two components of one string $\st_i$ and disjoint from other components in $D$;
  \item or an embedded symplectic sphere of self-intersection $0$ intersecting exactly one component in $D$ transversely.
\end{itemize} 
\end{theorem}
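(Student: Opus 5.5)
We prove Theorem \ref{thm:ruling} by passing through Theorem \ref{thm:main}: show that sub-toric $\Rightarrow$ existence of $C$ by an explicit toric construction, and existence of $C$ $\Rightarrow$ gap-admissible, which by Theorem \ref{thm:main} gives sub-toric.

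\emph{Sub-toric implies existence of $C$.} Assume $D$ is sub-toric. By Theorem \ref{thm:main}, $D$ is then the minimal resolution configuration of $\CP(a,b,c)$. Up to permuting the weights, we may assume the relevant structure sits at the corner with weight $a$ (if some weight equals $1$, the corresponding string is empty and the second alternative is expected to occur).

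In $\CP(a,b,c)$, take the pencil of curves $\{z_1^c = t\, z_2^b\}$. Its generic member is a rational curve passing through $[1:0:0]$ and avoiding the other two orbifold points. In the resolution picture, the proper transform $C$ of a generic member should meet the $a$-string $\st_1$ only at a node $S_i\cap S_{i+1}$. There it has a $(p,q)$-cusp, with $p,q$ read off from the continued fraction of $a/a_b$: they are the multiplicities of the pullbacks of the coordinate functions along $S_i$ and $S_{i+1}$.

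Alternatively, this can be seen torically. The curve is the closure of a translate of the one-parameter subgroup in the direction $(b,-c)$-type. In the resolved toric surface it limits to a torus-fixed point, namely the vertex between the two edges whose normals bound that direction. Symplectic smoothness of $C$ away from the cusp comes from Kähler-ness of the toric structure.

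We then compute $[C]^2=pq$ from the projection formula. The $\CP(a,b,c)$ degree of $\{z_1^c = t z_2^b\}$ is $bc$, so its self-intersection there is $(bc)^2/(abc)=bc/a$. Local cusp corrections at the $a$-point then reduce this to $pq$; this is a routine continued fraction identity. When $p=1$ or $q=1$, $C$ degenerates to a smooth sphere meeting one component transversally, giving the second alternative.

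\emph{Existence of $C$ implies gap-admissible.} The idea, following \cite{LN25}, is to foliate $X\setminus D$ by deformations of $C$. Here are the steps.

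\begin{enumerate}
\item Choose $J$ making $D\cup C$ holomorphic.
\item Use the Gromov--Witten / Seiberg--Witten count (as in \cite{MScounting,McduffSiegel}) of rational curves in $[C]$ with the prescribed cusp and contact orders. Since $[C]^2=pq$ and the index computation gives dimension $2$, curves in this class sweep out $X$.
\item Apply adjunction: $K\cdot[C] = -2 + pq - (p-1)(q-1) - \ldots$. Combined with $C\cdot[\st_2]=C\cdot[\st_3]=0$, this expresses $[\omega]\cdot (K+[D])$ in terms of $\omega(C)$.
\item For any connecting log exceptional class $E$, positivity of intersection with generic members of the family gives $E\cdot [C]\geq 1$ when $E$ meets $\st_j,\st_k$ outside the cusp string. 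This bounds $\omega(E)$, hence the gaps $\e$, in terms of $\omega(C)$ minus the areas of $D$-components.
\end{enumerate}

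Summing these bounds should yield the strict inequality $[\omega]\cdot(K_\omega+[D])<-\e(\st_i,\st_j;D)-\e(\st_i,\st_k;D)$ for each $i$.

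An alternative route avoids the gap inequality altogether. Blow up at the cusp (toric-type weighted blowups) to turn $C$ into a smooth $0$-sphere fibre of a ruling. This reduces to McDuff's ruled-surface classification, after which the strings become sections plus fibre components, which is manifestly sub-toric.

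The main obstacle is this last reduction. We must control all degenerate fibres of the induced ruling, showing that every singular fibre lies in $D$ except at most one chain of exceptional curves. For this I would use fullness: $l(D)=b_2^-$ leaves no room for extra fibre components, combined with the $(a,b,c)$-type continued fraction constraints to pin down the configuration.
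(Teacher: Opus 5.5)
\paragraph{Main gap: existence of $C$ $\Rightarrow$ sub-toric.} This direction is not established.

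Your main route derives gap-admissibility and then quotes Theorem \ref{thm:main}. Steps 3--4 have no mechanism behind them. Adjunction and $[C]\cdot[\st_2]=[C]\cdot[\st_3]=0$ only constrain intersection numbers, and say nothing about $[\omega]\cdot(K_\omega+[D])$. Likewise $E\cdot[C]\geq 1$ gives no upper bound on $\omega(E)$. A ruling bounds the area of an exceptional class only when that class is a component of a degenerate fibre, i.e.\ $E\cdot[\tilde C]=0$, which is the opposite situation. Bounding $\e(\st_i,\st_j;D)$ requires controlling all of $\mathcal{E}^{\text{log}}_\omega(D)$, and the paper can only do that once the toric structure is known (Proposition \ref{prop:onlyif}). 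The paper never proves gap-admissibility in this direction; it proves sub-toricity directly (Proposition \ref{prop:recoverfromruling}).

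Your alternative route starts correctly: resolve the cusp by toric blowups to get a fibre class $\tilde C$. But it stops where the work begins, and ``manifestly sub-toric'' is not justified, since a description of singular fibres does not produce a Hamiltonian $T^2$-action on $(X,\omega)$. The missing steps are:
\begin{enumerate}[nosep]
\item[(i)] The fibre-class subvarieties through $\st_1$ and $\st_2$ are distinct. Each contains an exceptional sphere $C_i$ with $[C_i]\cdot[\st_i]=[C_i]\cdot[\st_3]=1$, and there are no non-toric exceptional spheres. This uses fullness together with Proposition \ref{prop:fiberclass} and Lemma \ref{lem:fibercomplement}, as in Lemma \ref{lemma:connecting}. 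In particular the picture is two degenerate fibres, each containing a connecting sphere outside $D$.
\item[(ii)] $[\tilde\omega]\cdot U>0$ for $U=-K_{\tilde\omega}-[\tilde D]-[C_1]-[C_2]$. This comes from $U^2=(K_{\tilde\omega}+[\tilde D])^2+2\geq -1$ (via Corollary \ref{cor:3nminus6}). If $U^2\geq 0$, use $U\cdot[\tilde C]=1$ and the light cone lemma; if $U^2=-1$, use \cite[Theorem A]{LiLiu96}. This is precisely where the ruling replaces the area hypothesis.
\item[(iii)] $\tilde D\cup C_1\cup C_2$ has no branching component. This is where the $(a,b,c)$-type hypothesis really enters, through Lemma \ref{lem:-2chains}, Claim \ref{claim:sphereinHirzebruch}, and the $\Xi$-count in the cuspidal case.
\item[(iv)] $U$ is represented by a sphere closing the string into a toric log Calabi--Yau cycle, and \cite[Theorem 1.5]{LMN22} then supplies the toric structure.
\end{enumerate}

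\paragraph{Forward direction: sub-toric $\Rightarrow$ existence of $C$.} Using the K\"ahler toric structure and closures of translates of a one-parameter subgroup is a legitimate, more explicit alternative to Proposition \ref{prop:affineruling}. The paper instead builds $F_a=F_b$ from determinants and represents it via Lemma \ref{lem:resolutionfiberclass} and McDuff--Siegel. In your approach $[C]^2=pq$ is immediate, since two translates meet only at the torus-fixed point, with multiplicity $pq$.

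However, the choice of corner is not a harmless permutation. At a vertex of non-maximal weight, the limiting direction can fall into the cone spanned by an end ray of the string and an inherited edge. For example, in $\CP(2,3,5)$ take the proper transform of a generic member of your pencil $\{z_1^5=tz_2^3\}$. It is a smooth sphere of square $3$ through the point $\st_a\cap N_c$, tangent to order $3$ to the $(-2)$-sphere $\st_a$. That is neither alternative of the theorem. You must use the vertex of largest weight (the paper's $c$-vertex with $a<b<c$), where a short lattice computation excludes this. Also, $p=1$ at a node gives a smooth curve of square $q$ tangent to one component, not the second alternative. The transverse square-zero sphere arises exactly when the direction lies on a ray of the resolved fan.
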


\begin{remark}
  From the symplectic point of view, the theorem above may also be regarded as a ``weighted'' version of the classical Gromov--McDuff's theorem characterizing the symplectic projective plane by the existence of an embedded symplectic sphere of self-intersection $1$. Note that we do not use the notion of orbifold curves, but we do need to allow for singular curves on smooth manifolds.
\end{remark}

\begin{remark}
  The symplectic area of the class $[C]$ in Theorem \ref{thm:ruling} is expected to provide an efficient upper bound for the Gromov width of the smooth locus of $\CP(a,b,c)$. Combined with the enumeration result of \cite{DP2}, this suggests investigating the infimum of the symplectic areas of all affine ruling classes and comparing it with the actual Gromov width, or with the lattice width of the moment polygon of $\CP(a,b,c)$.
\end{remark}

\subsection{Structure of the paper}

In Section \ref{section:divisorintro}, we introduce the basic notions and background on symplectic divisors used throughout the paper. Since our focus is on symplectic (log) Kodaira dimension $-\infty$, the notion of a fiber class plays an important role. Subsections \ref{section:degfiberclass} and \ref{section:resfiberclass} investigate the behavior of degenerations of fiber classes and explain how to obtain them from non-negative definite strings, respectively. In Subsection \ref{sec:LN25}, we then review the main technique from our previous paper \cite{LN25} for dealing with connected symplectic divisors with $\kappa^s=-\infty$.

\medskip
\noindent In Section \ref{section:WP}, we study weighted projective planes and their minimal resolutions in the symplectic sense. There we prove the ``only if'' direction of Theorem \ref{thm:main}.

\medskip
\noindent In Section \ref{Section:HJstrings}, we prove the ``if'' direction. Subsection \ref{section:HJnontoric} investigates the restrictions imposed by the full, $(a,b,c)$-type, and gap-admissible conditions in Definition \ref{def:3assumptions} on the intersection patterns between exceptional spheres and HJ strings. In Subsections \ref{section:connecting} and \ref{section:complete}, we show how to connect three strings into a single string and then embed it into some toric boundary divisor.

\medskip
\noindent In Section \ref{section:ruling}, we study the interaction between the existence of symplectic affine rulings and the sub-toric configurations arising from minimal resolutions of weighted projective planes. There we prove Theorem \ref{thm:ruling} by adapting the strategies developed in the previous sections.

\medskip
\noindent Throughout the paper, we will sometimes write $C_A$ for a symplectic submanifold of homology class $[C_A]=A$, so as to make a distinction between configurations and their classes.

\medskip
\noindent{\bf Acknowledgement:}  The authors  are supported by Simons Travel Fund SFI-MPS-TSM-00013390.

\section{Preliminaries on symplectic divisors}\label{section:divisorintro}

In this section, we review several birational operations on symplectic divisors. Let $D$ be a symplectic divisor in $(X,\omega)$.
 A symplectic embedding $I$ of the standard ball $B(\delta)\subseteq (\C^2,\omega_{\mathrm{std}})$ into $(X,\omega)$ is said to be \emph{relative to $D$} if $I^{-1}(D)$ is either empty or the union of one or two coordinate planes in $\C^2$, intersected with $B(\delta)$. Since $D$ is $\omega$-orthogonal, for sufficiently small $\delta$ there exists a $D$-relative symplectic embedding around every point of $X$. The symplectic blowup construction (\cite[Section 7.1]{MSbook}) removes $I(B(\delta))$ and collapses its boundary along the Hopf fibration to produce a symplectic exceptional sphere $C_E$ of symplectic area $\delta$. When the embedding is relative to the divisor $D$, the intersection of $D$ with the boundary of $I(B(\delta))$ consists precisely of circle fibers of the Hopf fibration. The \emph{proper transform} $\tilde{D}_i$ of a component $D_i\subseteq D$ is then the symplectic submanifold in the blowup manifold $(\tilde{X},\tilde{\omega})$ obtained by removing the interior of the disk $I(B(\delta))\cap D_i$ and collapsing its boundary circle. The proper transform $\tilde{D}_i$ intersects the exceptional sphere $C_E$ $\tilde{\omega}$-orthogonally. Therefore, the union of all proper transforms $\tilde{D}_i$ together with $C_E$ is a symplectic divisor in the blowup symplectic manifold $(\tilde{X},\tilde{\omega})$, called the \emph{total transform} of $D$.

Depending on the position of the symplectic embedding $I$ relative to the divisor $D$, we say that $(\tilde{X},\tilde{\omega},\tilde{D})$ is obtained from $(X,\omega,D)$ by an \emph{exterior blowup} if $I(B(\delta))$ is disjoint from $D$ and $\tilde{D}$ is either the total transform of $D$ or the union of all proper transforms $\tilde{D}_i$; by a \emph{toric blowup} if $I(B(\delta))$ is centered at an intersection point of $D$ and $\tilde{D}$ is the total transform of $D$; by a \emph{non-toric blowup} if $I(B(\delta))$ intersects only one component of $D$ and $\tilde{D}$ is the union of all proper transforms $\tilde{D}_i$; and by a \emph{half-toric blowup} if $I(B(\delta))$ intersects only one component of $D$ and $\tilde{D}$ is the total transform of $D$.
Accordingly, the symplectic exceptional sphere $C_E$ is called exterior, toric, non-toric, or half-toric with respect to the pair $(D,\tilde{D})$, and the inverse operation is called the corresponding blowdown.

\begin{figure}[t]
\centering
\begin{tikzpicture}[x=0.99cm,y=0.99cm, line cap=round, line join=round, >=Latex,
    divisor/.style={line width=1.1pt, draw=black},
    branch/.style={line width=1.1pt, draw=black},
    ghost/.style={line width=1.1pt, draw=black},
    newcurve/.style={line width=1.1pt, draw=blue!70},
    exc/.style={line width=0.8pt, draw=black!65},
    labelstyle/.style={font=\bfseries\Large}]

  \draw[divisor] (6.4,2.35) .. controls (7.6,2.7) and (9.1,2.7) .. (10.4,2.25);
  \draw[ghost] (7.1,1.7) .. controls (7.15,2.35) and (6.95,3.15) .. (6.75,3.6);
  \draw[branch] (7.55,1.7) .. controls (7.7,2.3) and (7.65,3.1) .. (7.4,3.8);
 
  \draw[branch] (9.1,1.5) .. controls (9.15,2.2) and (9.1,3.0) .. (9.45,3.95);
   \draw[branch] (8.8,3.5) .. controls (9.15,3.5) and (9.1,3.4) .. (10.15,3.55);
  
  \draw[exc] (7,2.95) circle (0.34);
  \draw[exc] (8.38,3.63) circle (0.32);
  \draw[exc] (9.12,2.52) circle (0.3);
  \node at (8.1,2.08) {$D$};
  \node[font=\small] at (6.08,2.92) {$I(B(\delta))$};
  \node[font=\small] at (8.43,4.16) {$I(B(\delta))$};
  \node[font=\small] at (10.05,2.7) {$I(B(\delta))$};

  \draw[divisor] (0.9,4.55) .. controls (2.1,4.9) and (3.5,4.9) .. (4.75,4.48);
  \draw[ghost] (1.45,4.0) .. controls (1.55,4.6) and (1.45,5.35) .. (1.25,5.85);
  \draw[branch] (1.95,4.0) .. controls (2.1,4.55) and (2.05,5.3) .. (1.82,6.02);
  \draw[branch] (4.08,3.88) .. controls (4.15,4.58) and (4.1,5.28) .. (4.48,6.12);
    \draw[branch] (3.8,5.8) .. controls (4.45,5.7) and (4.45,5.7) .. (5.2,5.8);
  \draw[newcurve] (0.8,5.38) .. controls (1.25,5.3) and (1.6,5.3) .. (1.7,5.46);
  \node at (2.95,4.18) {$\tilde{D}$};

  \draw[divisor] (0.85,0.82) .. controls (2.0,1.12) and (3.45,1.12) .. (4.7,0.78);
  \draw[ghost] (1.48,0.36) .. controls (1.56,0.96) and (1.46,1.72) .. (1.22,2.2);
  \draw[branch] (1.96,0.35) .. controls (2.1,0.92) and (2.06,1.7) .. (1.84,2.5);
 
  \draw[branch] (4.08,0.3) .. controls (4.12,0.95) and (4.08,1.75) .. (4.5,2.42);
  \draw[branch] (3.8,1.8) .. controls (4.45,1.7) and (4.45,1.7) .. (5.2,1.8);
 
  \draw[newcurve,dashed] (0.8,1.58) .. controls (1.25,1.52) and (1.6,1.55) .. (1.7,1.72);
  \node at (2.95,0.42) {$\tilde{D}$};

  \draw[divisor] (12.9,4.95) .. controls (14.25,5.35) and (15.65,5.35) .. (16.95,4.85);
  \draw[ghost] (13.35,4.55) .. controls (13.45,5.15) and (13.42,5.95) .. (13.2,6.55);
  \draw[branch] (13.82,4.52) .. controls (13.98,5.15) and (13.97,5.98) .. (13.75,6.75);
  
  \draw[branch] (16.35,4.38) .. controls (16.35,5.05) and (16.3,5.85) .. (16.85,6.85);
   \draw[branch] (16.25,6.4) .. controls (16.35,6.4) and (16.4,6.3) .. (17.35,6.6);
  
  \draw[newcurve] (14.55,6.62) .. controls (15.05,6.15) and (15.62,5.95) .. (16.02,5.98);
  \draw[newcurve,dashed] (14.38,7.15) .. controls (14.95,6.86) and (15.4,6.55) .. (16.1,6.52);
  \node at (15.0,4.53) {$\tilde{D}$};
  \node at (14.18,6.78) {or};

  \draw[divisor] (13.25,1.12) .. controls (14.45,1.45) and (15.52,1.45) .. (16.75,0.95);
  \draw[ghost] (13.78,0.72) .. controls (13.88,1.25) and (13.82,1.95) .. (13.6,2.45);
  \draw[branch] (14.25,0.7) .. controls (14.38,1.26) and (14.35,2.0) .. (14.14,2.6);
  \draw[branch] (16.88,1.45) .. controls (16.74,1.45) and (16.7,2.02) .. (17.22,3.12);
  \draw[ghost] (16.64,2.78) .. controls (16.88,2.55) and (17.02,2.42) .. (17.38,2.42);
  \draw[newcurve] (16.1,0.8) .. controls (16.18,1.55) and (16.42,2.0) .. (16.95,2.05);
  \node at (15.0,0.7) {$\tilde{D}$};

  \draw[->, draw=black!65, line width=0.6pt] (6.5,3.65) -- (4.85,5.35);
  \node[labelstyle] at (5.95,5.0) {half-toric};

  \draw[->, draw=black!65, line width=0.6pt] (6.25,2.55) -- (4.85,1.45);
  \node[labelstyle] at (5.55,2.25) {non-toric};

  \draw[->, draw=black!65, line width=0.6pt] (8.95,4.05) -- (12.55,6.15);
  \node[labelstyle] at (10.7,5.55) {exterior};

  \draw[->, draw=black!65, line width=0.6pt] (9.45,2.2) -- (12.95,1.25);
  \node[labelstyle] at (11.6,1.98) {toric};

\end{tikzpicture}
\caption{Four types of blowups, where the dashed curve indicates a component not contained in $\tilde{D}$. To distinguish the types, one must consider both $(X,\omega,D)$ and $(\tilde{X},\tilde{\omega},\tilde{D})$. For example, comparing $(X,\omega)$ with $(\tilde{X},\tilde{\omega})$ determines the exceptional class, while comparing $D$ with $\tilde{D}$ distinguishes the half-toric and non-toric cases.}
\label{fig:blowup-types}
\end{figure}

Recall that given a pair $(X,\omega,D)$, its adjoint class is defined to be $K_\omega+[D]$. We have the following simple observation.

\begin{lemma}\label{lem:blowdownarea}
Under any blowdown (toric, non-toric, half-toric, or exterior), the symplectic area of the adjoint class is non-increasing.
\end{lemma}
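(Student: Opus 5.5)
Since the assertion is homological, the plan is to compare the adjoint class of $(\tilde X,\tilde\omega,\tilde D)$ with that of $(X,\omega,D)$ after pulling back along the blowup. Identify $H^2(\tilde X)=H^2(X)\oplus\Z E$, where $E$ is the exceptional class with $\tilde\omega(E)=\delta>0$. Two standard facts will be used: $K_{\tilde\omega}=K_\omega+E$ and $[\tilde\omega]=[\omega]-\delta\,\mathrm{PD}(E)$, meaning $[\tilde\omega]\cdot A=[\omega]\cdot A$ for $A\in H_2(X)$, $[\tilde\omega]\cdot E=\delta$, and $E^2=-1$. A proper transform of a component $D_i$ through whose blowup ball $I(B(\delta))$ passes (locally a coordinate disk) has class $[\tilde D_i]=[D_i]-E$. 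Components whose preimage misses the ball keep their class.

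I will then compute $[\tilde D]$ in each of the four cases. In the exterior case there are two sub-cases: $[\tilde D]=[D]$ (only proper transforms) or $[\tilde D]=[D]+E$ (total transform). In the toric case two components are hit and $C_E$ is added, so $[\tilde D]=[D]-2E+E=[D]-E$. In the non-toric case $[\tilde D]=[D]-E$. In the half-toric case $[\tilde D]=[D]-E+E=[D]$.

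Adding $K_{\tilde\omega}=K_\omega+E$ gives $K_{\tilde\omega}+[\tilde D]=(K_\omega+[D])+\epsilon E$. Here $\epsilon=0$ in the toric and non-toric cases, $\epsilon=1$ in the half-toric case and in the exterior case with only proper transforms, and $\epsilon=2$ in the exterior case with the total transform. Pairing with $[\tilde\omega]$ yields
\[
[\tilde\omega]\cdot(K_{\tilde\omega}+[\tilde D])=[\omega]\cdot(K_\omega+[D])+\epsilon\delta\ \ge\ [\omega]\cdot(K_\omega+[D]),
\]
since $\epsilon\ge0$ and $\delta>0$. So passing from $\tilde X$ down to $X$ never increases the adjoint area, which is the statement for blowdowns.

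The only step needing care is the bookkeeping of $[\tilde D]$. In particular, one must check that the coefficient of $E$ in $[\tilde D]$ is the number of times $C_E$ is included, minus the number of components through the blowup point, and that the two exterior sub-cases are both covered. Beyond that the argument is routine, and no analytic input is required.
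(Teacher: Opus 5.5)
Your proof is correct and takes the same route as the paper: write $K_{\tilde\omega}=K_\omega+E$, track $[\tilde D]-[D]$ in each of the four cases, and pair with $[\tilde\omega]$. Your bookkeeping is slightly more complete than the paper's. The paper asserts $[\tilde D]-[D]\in\{-E,0\}$, which overlooks the exterior total-transform case $[\tilde D]=[D]+E$. As you note, in that case the adjoint area drops by $2\tilde\omega(E)$ under blowdown, and the conclusion still holds.
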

\begin{proof}
 Let $(X,\omega,D)$ be the blowdown of $(\tilde{X},\tilde{\omega},\tilde{D})$ along the exceptional sphere $C_E$. Then $K_{\tilde{\omega}}-K_{\omega}=E$, while $[\tilde{D}]-[D]$ is either $-E$ or $0$ depending on the type of blowdown. Therefore, the symplectic area of the adjoint class either decreases by $\omega(E)>0$ or remains unchanged under the blowdown.
\end{proof}

Following \cite{MO15}, we introduce the almost complex structures which we will work with for the symplectic divisor $D=\cup D_i\subseteq (X,\omega)$.
\begin{definition}
    An $\omega$-tame almost complex structure $J$ is said to be \emph{$D$-adpated} if there exists some plumbed closed fibered neighborhood $\overline{\mathcal{N}}(D)=\cup\overline{\mathcal{N}}(D_i)$, where each $\overline{\mathcal{N}}(D_i)$ is a closed neighborhood of $D_i$ modeled on a closed neighborhood of the zero section in a holomorphic line bundle over $D_i$ with Chern number $[D_i]^2$, such that $J$ is integrable in $\overline{\mathcal{N}}(D)$ and makes both $D_i$ and the projection $\overline{\mathcal{N}}(D_i)\rightarrow D_i$ be $J$-holomorphic. The collection of all such $J$'s will be denoted by $\mathcal{J}(D)$.
\end{definition}

\begin{definition}\label{def:Dgood}
    Let $D\subseteq (X,\omega)$ be a symplectic divisor. A nonzero class $A\in H_2(X;\mathbb{Z})$ is said to be \emph{$D$-good} if the following conditions hold:
    \begin{enumerate}[nosep]
        \item $SW(A)\neq 0$;
        \item if $A^2=0$, then $A$ is primitive;
        \item $A\cdot E\geq 0$ for any exceptional class $E\in\mathcal{E}_\omega\setminus \{A\}$;
        \item $A\cdot[D_i]\geq 0$ for all $i$.
    \end{enumerate}
\end{definition}
Here, the first condition $SW(A)\neq 0$ means that the Seiberg--Witten invariant of the spin$^{c}$ structure associated to $A$ is nonzero; when $b_2^+(X)=1$, this is understood to be the invariant defined within the symplectic chamber (see \cite[Section 2.2]{LN25} for more discussions), since $K_\omega$ determines a canonical bijection between $H_2(X;\mathbb{Z})$ and the set of spin$^c$ structures. In particular, this implies that its \emph{SW-index}
\[I_{SW}(A):=A^2-K_\omega\cdot A\geq 0.\] 

The following theorem of McDuff--Opshtein provides an effective criterion for the existence of embedded symplectic surfaces in the relative setting.

\begin{theorem}[\cite{MO15}, Theorem 1.2.7]\label{thm:MO15}
    If $A\in H_2(X;\mathbb{Z})$ is a $D$-good class such that $A^2+K_{\omega}\cdot A=-2$, then there is a residual set $\mathcal{J}_{\text{emb}}(D,A)\subseteq \mathcal{J}(D)$ such that for any $J\in \mathcal{J}_{\text{emb}}(D,A)$, $A$ can be represented by an embedded $J$-holomorphic sphere. 
\end{theorem}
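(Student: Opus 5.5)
Since this is a cited result of McDuff--Opshtein, I would reconstruct it along the lines of the standard Taubes--McDuff argument, adapted to the non-generic class $\mathcal{J}(D)$. The first step is existence of some representative for every $J\in\mathcal{J}(D)$. Since $SW(A)\neq0$ (computed in the symplectic chamber when $b_2^+=1$), Taubes' $SW\Rightarrow Gr$ gives, for generic $\omega$-tame $J$, an embedded $J$-holomorphic representative of $A$. By Gromov compactness, for every tame $J$, and in particular every $J\in\mathcal{J}(D)$, there is then a $J$-holomorphic stable curve (nodal cusp-curve) in class $A$. The adjunction hypothesis $A^2+K_\omega\cdot A=-2$ says that an irreducible, somewhere injective representative has arithmetic genus $0$, so it is an embedded sphere. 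The whole task is therefore to rule out degenerate representatives for generic $J\in\mathcal{J}(D)$.

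The second step is a transversality statement inside $\mathcal{J}(D)$. I would show that a closed $J$-curve contained in the integrable neighbourhood $\overline{\mathcal{N}}(D)$ must lie in $D$: compose with the holomorphic projection to $D_i$ and apply the maximum principle to a plurisubharmonic fibre norm. Hence every simple component not contained in $D$ has an injective point outside $\overline{\mathcal{N}}(D)$, where $J$ may be perturbed freely. For $J$ in a residual subset $\mathcal{J}_{\text{emb}}(D,A)$, every such component $C$ is then Fredholm regular, so its index satisfies $2c_1(C)-2\ge0$ for spheres, and more generally $I_{SW}\ge0$-type inequalities hold. The only non-regular pieces of a limit curve are thus the components $D_i$ of $D$ (with multiplicities) and multiple covers of regular curves.

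The third step, which I expect to be the main obstacle, is an index and intersection bookkeeping argument showing that the limit is irreducible. Write the stable curve as $A=\sum m_jC_j+\sum n_i[D_i]$. The hypotheses are used as follows.
\begin{itemize}[nosep]
\item $A\cdot[D_i]\ge0$ controls the $D$-components through positivity of intersections.
\item $A\cdot E\ge0$ for all $E\in\mathcal{E}_\omega\setminus\{A\}$ rules out splitting off exceptional spheres.
\item Primitivity when $A^2=0$ excludes multiple covers of a fibre class.
\end{itemize}
The goal is to show that the sum of indices of the components is strictly smaller than the index $0$ of $A$ unless the curve is a single embedded sphere. The delicate case is when components $D_i$ of negative self-intersection appear. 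To handle it, I would first try pairing $A$ with the $D$-part $\sum n_iD_i$ and using $A\cdot D_i\ge0$ together with negativity of $D_i^2$ to bound the $n_i$. Failing a direct bound, I would follow McDuff's inductive argument for rational and ruled surfaces, blowing down exceptional components and inducting on $\omega(A)$. Once reducibility is excluded, adjunction gives the embedded sphere for $J\in\mathcal{J}_{\text{emb}}(D,A)$.
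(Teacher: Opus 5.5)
\textbf{Verdict: the outline is reasonable, but there is a genuine gap (Step 3 is not carried out), and Step 2 contains a false claim.}

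The paper does not prove this statement. It quotes it from \cite{MO15} and uses it as a black box, so your proposal has to stand on its own. Its architecture is the expected one: Taubes' $SW\Rightarrow Gr$ for generic $J$, Gromov compactness to reach $J\in\mathcal{J}(D)$, regularity of the components not lying in $D$, and then bookkeeping on the limit. However, Step 3, which carries the whole content of the theorem, is only announced.

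\emph{Point constraints are missing.} The bookkeeping only has force once you impose $k(A)=A^2+1$ generic point constraints, chosen off $D$, on the approximating curves. The unconstrained index of $A$ is $2A^2+2$, not $0$, and a lower-index degenerate limit is not contradictory by itself. For example, $A=2H$ in $\CP^2$ is $D$-good for a line $D$, and double-line limits are excluded only by the point count. So multiple covers are not just a fibre-class issue.

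\emph{Pairing with the $D$-part is not enough.} With the constraints, write $A=B+N$, where $B=\sum m_j[C_j]$ is the part off $D$ and $N=\sum n_i[D_i]$. Regularity gives $k(A)\le\sum_j(c_1(C_j)-1)\le\sum_j k(C_j)$, where $k(X)=\tfrac12(X^2-K_\omega\cdot X)$. One also has exactly
\[
k(A)=k(B)+A\cdot N+1-p_a(N),\qquad p_a(N)=1+\tfrac12\bigl(N^2+K_\omega\cdot N\bigr).
\]
So pairing $A$ with the $D$-part, as you suggest, excludes $N\neq 0$ only when $p_a(N)\le 0$ and $k(B)\ge\sum_j k(C_j)$. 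Neither holds automatically:
\begin{itemize}
\item $p_a(N)=1$ for a reduced cycle of spheres. It also equals $1$ for the null cycle $Z$ of an $\tilde{E}_8$-shaped tree of $(-2)$-spheres once the end of its long arm is changed to square $-3$; this tree is negative definite. In such cases with $A\cdot N=0$, the count only gives $k(A)=k(B)$.
\item Multiply covered $(-1)$-components make $k(B)<\sum_j k(C_j)$.
\end{itemize}
Ruling these out needs an actual argument combining connectedness of the limit, adjunction for the non-$D$ part, the hypothesis $A\cdot E\ge 0$, and primitivity. That is what \cite{MO15} has to supply, and your text only lists what you would try. The fallback of McDuff's blow-down induction is not available without further work either: blowing down exceptional curves changes $D$, the class $\mathcal{J}(D)$, and the $D$-goodness hypotheses.

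\emph{Step 2 rests on a false claim.} A closed $J$-curve inside $\overline{\mathcal{N}}(D_i)$ need not lie in $D_i$ when $[D_i]^2\ge 0$. The model is a disk bundle of a holomorphic line bundle of degree $[D_i]^2$. Graphs of small holomorphic sections, or parallel copies of $D_i$ in degree $0$, are closed holomorphic curves in it. Your maximum-principle argument for $\log\|v\|$ works only for negative normal bundles. Curves lying entirely in the integrable region cannot be made regular by perturbing $J$ outside it, so they need a separate argument; the theorem is stated for arbitrary divisors, so this case cannot be ignored.

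\emph{Connectedness of the generic representative.} Taubes' theorem gives, for generic $J$, a possibly disconnected embedded representative. Before talking about a connected stable limit and its irreducibility, you need either to show that this representative is a single embedded sphere, or to handle disconnected limits as well.
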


The lemma below is an immediate consequence of McDuff--Opshtein's criterion.

\begin{lemma}\label{lem:blowupexceptional}
  Let $(X,\omega,D)$ be a pair, and let $(\tilde{X},\tilde{\omega},\tilde{D})$ be a toric, non-toric, half-toric, or exterior blowup of $(X,\omega,D)$. Assume that $C\subseteq (X,\omega)$ is a symplectic exceptional sphere such that $C\cup D$ is also a symplectic divisor. Then there exists a symplectic exceptional sphere $\tilde{C}\subseteq (\tilde{X},\tilde{\omega})$ such that $[\tilde{C}]=[C]$ under the natural inclusion $H_2(X;\mathbb{Z})\subseteq H_2(\tilde{X};\mathbb{Z})$, and $\tilde{C}\cup \tilde{D}$ is also a symplectic divisor.
\end{lemma}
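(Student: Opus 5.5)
The plan is to verify that $E=[C]$ is a $\tilde D$-good class in $(\tilde X,\tilde\omega)$ and then to apply Theorem \ref{thm:MO15}. Write $E_0$ for the class of the new exceptional sphere $C_{E_0}$ created by the blowup. Under the inclusion $H_2(X;\Z)\subseteq H_2(\tilde X;\Z)$ we have $E\cdot E_0=0$ and $K_{\tilde\omega}=K_\omega+E_0$. Hence $E^2=-1$ and $K_{\tilde\omega}\cdot E=K_\omega\cdot E=-1$, which gives the adjunction condition $E^2+K_{\tilde\omega}\cdot E=-2$.

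\emph{Conditions (1) and (2).} Condition (2) is vacuous since $E^2=-1\neq 0$. For condition (1), $E$ remains a symplectic exceptional class in $\tilde X$. Indeed, the blowup ball can be chosen disjoint from $C$ after a small isotopy of the ball, or more formally $E\in\mathcal E_{\tilde\omega}$ because $\omega(E)=\tilde\omega(E)>0$ and exceptional classes are characterized by SW/adjunction. Consequently $SW(E)\neq 0$ in the symplectic chamber.

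\emph{Condition (3).} For any $E'\in\mathcal E_{\tilde\omega}\setminus\{E\}$, positivity of intersections between distinct exceptional classes gives $E\cdot E'\geq 0$. One way to see this is to choose a generic $J$ for which both classes are represented by embedded $J$-spheres, which happens because each is represented for generic $J$ and they are distinct irreducible curves.

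\emph{Condition (4), the main check.} We need $E\cdot[\tilde D_i]\geq 0$ for every component of $\tilde D$. The components of $\tilde D$ are the proper transforms $\tilde D_i$, with classes $[D_i]-m_iE_0$ where $m_i\in\{0,1\}$, and possibly $C_{E_0}$ itself. Since $E\cdot E_0=0$, we get $E\cdot[\tilde D_i]=E\cdot[D_i]=[C]\cdot[D_i]$, and also $E\cdot E_0=0$. It therefore suffices to know $[C]\cdot[D_i]\geq 0$ for all $i$. This holds when $C$ is not a component of $D$, because $C\cup D$ is a divisor and so intersections are positive. The remaining case is when $C$ is itself a component $D_j$ of $D$. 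In that case $E\cdot[D_j]=-1<0$, so $E$ fails to be $\tilde D$-good. This is the main obstacle.

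I would handle this case separately and directly. Choose the blowup ball away from $C$, which is possible after isotoping the ball within its type, since the type only depends on which strata of $D$ it meets. If the ball does meet $C=D_j$, I would instead work with $D\setminus C$: apply the argument to the divisor $\tilde D\setminus\tilde C$ to obtain a sphere in class $E$, or simply take $\tilde C$ to be the proper transform when $[\tilde C]$ is allowed to change. Since the statement requires $[\tilde C]=[C]$, I would realize this by ensuring the ball misses $C$, using that $D$-relative embedded balls can be moved along the stratum they are centered on. When the ball misses $C$, the proper transform of $C$ is an embedded exceptional sphere with the same class that meets $\tilde D$ $\tilde\omega$-orthogonally, so the blowup construction gives $\tilde C$ directly.

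With $E$ established as $\tilde D$-good, Theorem \ref{thm:MO15} provides, for generic $J\in\mathcal J(\tilde D)$, an embedded $J$-sphere $\tilde C$ in class $E$. The last step is to show that $\tilde C\cup\tilde D$ is a symplectic divisor. Because $J$ is $\tilde D$-adapted, the intersections of $\tilde C$ with $\tilde D$ are positive. Genericity ensures they are transverse and that there are no triple points. A local perturbation, as in \cite{MO15}, makes the intersections $\omega$-orthogonal.
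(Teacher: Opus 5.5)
Your main argument is the paper's. You use $E\cdot E_0=0$ to get $E\cdot[\tilde D_i]=[C]\cdot[D_i]\ge 0$, and $E\cdot E_0=0$ for the new exceptional component. Conditions (1)--(3) hold for any symplectic exceptional class. So $E$ is $\tilde D$-good and Theorem~\ref{thm:MO15} applies. One small correction in the last step: Theorem~\ref{thm:MO15} as stated only produces an embedded $J$-sphere. Transversality, avoiding the nodes of $\tilde D$, and $\tilde\omega$-orthogonality come from a small perturbation (the paper cites \cite{LiUsher} and \cite{Gompf}), not from genericity of $J$ alone.

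Your paragraph on the case where $C$ is itself a component $D_j$ of $D$ should be deleted, not repaired. The hypothesis that $C\cup D$ is a divisor is meant with $C$ as an additional component. This is how the paper's proof reads it (``$[C]\cdot[D_i]\ge 0$ for every component $D_i$'') and how it is applied later (``if $C'$ is not a component of $D'$'').

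Your proposed fix also does not work. The blowup $(\tilde X,\tilde\omega,\tilde D)$ is given, and its type is defined by which components of $D$ the ball meets. If the ball meets $D_j=C$, you cannot isotope it off $C$ ``within its type''.

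Worse, the conclusion is false in that case. Consider a toric, non-toric or half-toric blowup whose ball meets $D_j=C$. Then $[\tilde D_j]=[C]-E_0$. No component of $\tilde D$ has class $[C]$, since another component in class $[C]$ would have pairing $-1$ with $D_j$ in $D$. Any sphere $\tilde C$ in class $[C]$ outside $\tilde D$ would satisfy $[\tilde C]\cdot[\tilde D_j]=-1<0$, which is impossible in a symplectic divisor. With that case excluded by hypothesis, your proof is complete.
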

\begin{proof}
  Any symplectic exceptional class automatically satisfies conditions (1), (2), and (3) of Definition \ref{def:Dgood}. Since $C\cup D$ is a symplectic divisor, we have $[C]\cdot[D_i]\geq 0$ for every component $D_i$ of $D$. After the blowup, we still have $[C]\cdot[\tilde{D}_i]\geq 0$, because $[\tilde{D}_i]-[D_i]$ is either $0$ or $-E$, where the exceptional class $E$ satisfies $E\cdot[C]=0$. Therefore $[C]$ is $\tilde{D}$-good, and Theorem \ref{thm:MO15} yields an embedded $J$-holomorphic representative $\tilde{C}$ for some $J\in\mathcal{J}(\tilde{D})$. If necessary, we may perturb $\tilde{C}$ slightly so that each intersection between $\tilde{C}$ and $\tilde{D}$ is positively transverse and $\tilde{\omega}$-orthogonal, by \cite[Corollary 3.4]{LiUsher} and \cite[Lemma 2.3]{Gompf}, respectively. Hence $\tilde{C}\cup\tilde{D}$ is also a symplectic divisor.
\end{proof}

A symplectic divisor $D\subseteq (X,\omega)$ is called \emph{log Calabi--Yau} if $[D]=\operatorname{PD}(c_1(\omega))$. By \cite[Lemma 3.1]{Limak}, if $D$ is not a single torus, then $D$ must be a cycle of spheres and $X$ must be rational. It is known (\cite{ATF}) that a symplectic log Calabi--Yau divisor $D$ can be viewed as the boundary divisor of an almost toric fibration on $(X,\omega)$. In the extreme case where the almost toric fibration is in fact toric, induced by a Hamiltonian $T^2$-action, $D$ attains the maximal number of components,
\[
  l(D)=b_2(X)+2.
\]
Such divisors will be called \emph{toric log Calabi--Yau}. The following symplectic Torelli theorem may be viewed as a generalization of the classical Delzant theorem (\cite{Delzant}) from the toric setting to the almost toric setting.

\begin{theorem}[\cite{Limak,LMN22}]\label{thm:LCYtorelli}
  Two symplectic log Calabi--Yau pairs $(X,\omega,D=\cup D_i)$ and $(X',\omega',D'=\cup D_i')$ are symplectomorphic if and only if there is an integral isometry between the second (co)homologies of $X$ and $X'$ that sends $[\omega]$ to $[\omega']$ and $[D_i]$ to $[D_i']$ for each $i$.
\end{theorem}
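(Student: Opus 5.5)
The ``only if'' direction is immediate: a symplectomorphism $\Phi\colon (X,\omega,D)\to(X',\omega',D')$ induces $\gamma=(\Phi^{-1})^*$ on $H^2$, which is an integral isometry sending $[\omega]$ to $[\omega']$ and $\operatorname{PD}[D_i]$ to $\operatorname{PD}[D_i']$. For the ``if'' direction I would argue in three stages: realize $\gamma$ by a diffeomorphism, produce a path of log Calabi--Yau pairs joining $(X,\omega,D)$ to the pulled-back pair $(X,\Phi^*\omega',\Phi^{-1}(D'))$ with constant cohomology class, and then run a relative Moser argument along that path.

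\textbf{Reduction to a common manifold.} By \cite[Lemma 3.1]{Limak}, if $D$ is a single torus the statement reduces to the classical uniqueness of symplectic structures on $T^2$-bundles and elliptic-type cases, which I would treat separately. Otherwise $D$ and $D'$ are cycles of spheres and $X,X'$ are rational. For rational surfaces, every integral isometry preserving the canonical class and the symplectic cone is realized by a diffeomorphism; this follows from Wall's results together with reflections in $(-2)$- and $(-1)$-classes. Here $\gamma$ preserves $K$ because $\gamma(-[D])=-[D']$ and $[D]=\operatorname{PD}(c_1)$. So I take $\Phi\colon X\to X'$ with $\Phi^*=\gamma$ and reduce to a single $X$ carrying two symplectic forms $\omega_0,\omega_1$ in the same class with $K_{\omega_0}=K_{\omega_1}$. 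These two forms carry divisors $D^0,D^1$ whose components are pairwise homologous. By the uniqueness of symplectic structures on rational surfaces in a fixed class and canonical class (Lalonde--McDuff, Li--Liu), $\omega_0$ and $\omega_1$ are isotopic, so after a further diffeomorphism I may assume $\omega_0=\omega_1=\omega$.

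\textbf{Isotopy of the divisors.} It remains to show that two log Calabi--Yau cycles $D^0,D^1\subseteq(X,\omega)$ with $[D^0_i]=[D^1_i]$ for all $i$ are symplectically isotopic through divisors. The boundary case is then handled by the symplectic isotopy extension theorem (\cite[Proposition 4]{Auroux}, \cite[Proposition 0.2]{SiebertTian}), upgraded to respect $\omega$-orthogonal plumbings by Gompf's neighborhood normalization.

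I would prove the isotopy by induction on $b_2$, using toric and non-toric blowdowns. If some component has self-intersection $-1$, or if there is an exceptional class $E$ with $E\cdot[D_j]=1$ for a single $j$ and $E\cdot[D_i]=0$ otherwise, then Theorem \ref{thm:MO15} provides, for generic $J\in\mathcal J(D^0)$ and for generic $J\in\mathcal J(D^1)$, embedded representatives of the same class $E$. Blowing down (Lemma \ref{lem:blowdownarea} keeps the setting consistent) yields pairs with smaller $b_2$ and matching homological data, to which induction applies. I then blow back up, using Lemma \ref{lem:blowupexceptional}-type arguments to keep the ball sizes and positions isotopic. The base cases are the minimal log Calabi--Yau pairs: $\mathbb{CP}^2$ with a line and a conic, or with a triangle of lines; $S^2\times S^2$ with a $(0,0,0,0)$ square or its variants; and Hirzebruch-type cycles. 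In these cases isotopy follows from Gromov's and McDuff's results and from connectedness of the relevant $J$-strata, in the spirit of Proposition \ref{prop:AbreuMcDuff}.

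\textbf{Main obstacle.} I expect the hardest step to be the existence, in the inductive step, of a reducing exceptional class that works simultaneously for $D^0$ and $D^1$. The class $E$ is chosen homologically, so it is common to both. The real difficulty is ensuring that its $J$-holomorphic representative does not degenerate into curves containing components of $D$, and that the resulting choices are connected in the space of adapted $J$. To address this, I would first try to pick $E$ of minimal area among the classes satisfying the intersection conditions, so that any nodal degeneration would have to contain a component of $D$ of negative square, contradicting the intersection numbers. Connectedness of the relevant residual subset of $\mathcal J(D)$, which has codimension at least $2$ wall-crossing, would then give the required path.
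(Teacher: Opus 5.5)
Your reductions are fine as far as they go: realizing $\gamma$ by a diffeomorphism, using absolute uniqueness of cohomologous forms on rational surfaces to fix $\omega$, and finding a reducing class. You should justify the last point. For $b_2\ge 3$, a minimal-area $E$ satisfies $E\cdot[D]=-K_\omega\cdot E=1$ and has an embedded $J$-representative for $J\in\mathcal J(D)$, so it is either a $(-1)$-component or a non-toric class.

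The genuine gap is the sentence ``blow back up, keeping the ball sizes and positions isotopic.'' Induction gives you a symplectomorphism $\phi'$ of the blown-down pairs. The pairs $(X,\omega,D^0)$ and $(X,\omega,D^1)$ are then the blowups along $D'$-relative balls $\phi'(B^0)$ and $B^1$ of the same capacity $\omega(E)$. To finish, you need a symplectomorphism of the blown-down manifold that preserves $D'$ componentwise and carries $\phi'(B^0)$ to $B^1$. That is, you need uniqueness of $D'$-relative ball embeddings of a fixed, possibly large, size up to symplectomorphisms of the pair.

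Nothing you cite gives this. Lemma \ref{lem:blowupexceptional} only produces an exceptional sphere in the blowup, and Lemma \ref{lem:blowdownarea} concerns the adjoint area, which is zero here. This is not a formality: it is the relative analogue of McDuff's connectedness of ball embeddings, and it is essentially as strong as the theorem itself. Tiny balls can be slid along $D'_j$ (or are pinned at a node). But restoring the size changes $[\omega]$ on the blowup, and that is exactly the deformation-versus-isotopy problem.

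Your proposed remedy does not touch this. Choosing $E$ simultaneously is not an issue: it is a homological class, and Theorem \ref{thm:MO15} applies to $D^0$ and $D^1$ separately. Connectedness of a residual subset of $\mathcal J(D)$ also cannot join $D^0$ to $D^1$. The spaces $\mathcal J(D^0)$ and $\mathcal J(D^1)$ are different. Along a generic path of tame $J$ between them, a component of square $-k\le -2$ stops being $J$-holomorphic, because its existence is a codimension $2k-2$ condition. This is the same Abreu--McDuff stratification you invoke for the base cases. So no path of divisors is produced.

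The paper does not reprove the statement; it quotes \cite{Limak,LMN22}, which split the problem differently. Li--Mak show that the homological data determine the symplectic deformation class of the pair. There ball sizes may be shrunk freely, so a blowdown/blowup induction like yours does work. The upgrade from deformation equivalence with fixed $[\omega]$ to an actual symplectomorphism is where inflation relative to the divisor, in the spirit of \cite{MO15}, is needed. That relative inflation step is the missing idea.

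Separately, your torus case needs an isotopy statement for anticanonical symplectic tori. Uniqueness of the ambient symplectic structure alone does not identify the divisors.
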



\subsection{Degeneration of fiber class}\label{section:degfiberclass}

Given a closed symplectic $4$-manifold $(X,\omega)$, if $C$ is an embedded symplectic sphere of self-intersection $0$, we will call $[C]$ a \emph{fiber class}. When such a fiber class exists, McDuff's theorem (\cite{Mcduffrationalruled}) implies that $(X,\omega)$ must be either rational or ruled which in particular implies $b_2^+(X)=1$. We will need the following elementary observation about the complement of fiber classes later.

\begin{lemma}\label{lem:fibercomplement}
  Assume $[C]$ is a fiber class of $(X,\omega)$ represented by an embedded symplectic sphere $C$. Then there cannot exist a negative definite divisor configuration with $b_2^-(X)$ components in the complement  $X\setminus C$. 
\end{lemma}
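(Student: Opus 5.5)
Suppose for contradiction that $D'=\cup_{i=1}^{m} D'_i$ is a divisor in $X\setminus C$ with $m=b_2^-(X)$ components whose intersection matrix $Q=([D'_i]\cdot[D'_j])$ is negative definite. The argument is lattice-theoretic and uses only the intersection form on $H_2(X;\R)$. We combine the $m$ classes $[D'_i]$ with the fiber class $[C]$ and derive a contradiction with signature $(1,b_2^-(X))$. By McDuff's theorem quoted above, $b_2^+(X)=1$.

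\emph{Step 1.} Let $V\subseteq H_2(X;\R)$ be the span of the $[D'_i]$. Since $Q$ is negative definite, it is nondegenerate, so the $[D'_i]$ are linearly independent. Hence $V$ is a negative definite subspace of dimension $b_2^-(X)$. Its orthogonal complement $V^\perp$ is therefore positive definite of dimension $b_2^+(X)=1$, and $H_2(X;\R)=V\oplus V^\perp$.

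\emph{Step 2.} Because $D'$ is disjoint from $C$, we have $[C]\cdot[D'_i]=0$ for every $i$, so $[C]\in V^\perp$. Since $V^\perp$ is positive definite and $[C]^2=0$, this forces $[C]=0$ in real homology.

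\emph{Step 3.} The class $[C]$ cannot be zero, because $\omega(C)>0$ for the symplectic sphere $C$. This contradiction proves the lemma.

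The only point that needs care is Step 1: we need the $[D'_i]$ to be independent so that $V$ really has full negative dimension. This follows directly from the definiteness of $Q$, so I do not expect any real obstacle.
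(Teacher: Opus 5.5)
Your proposal is correct and is essentially the paper's argument. The divisor classes span a negative definite subspace of dimension $b_2^-(X)$ orthogonal to the nonzero isotropic class $[C]$, which the signature forbids. The paper phrases this by placing the span inside the degenerate rank-$b_2^-(X)$ lattice $\langle [C]\rangle^\perp$; you phrase it dually, placing $[C]$ inside the positive definite complement $V^\perp$, which also makes explicit why $[C]$ must vanish and does not actually need $b_2^+(X)=1$.
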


\begin{proof}
  Otherwise, the homology classes of the divisor components span a negative definite sublattice in $\langle [C]\rangle^\perp\subseteq H_2(X;\mathbb{Z})$. Because $b_2^+(X)=1$, $\langle [C]\rangle^\perp$ has rank $b_2^-(X)$. Since $[C]\in \langle [C]\rangle^\perp$, $\langle [C]\rangle^\perp$ cannot contain a negative definite sublattice of rank $b_2^-(X)$.
\end{proof} 
For any $\omega$-tame almost complex structure $J$, a \emph{$J$-holomorphic subvariety $\Theta$}, in the sense of Taubes (\cite{Taubessubvariety}), is a finite collection $\{(C_i,m_i)\}$ where each $m_i$ is a positive integer and $C_i$ is a closed subset of $X$ such
\begin{itemize}[nosep]
 \item its $2$-dimensional Hausdorff measure is finite and non-zero;
        \item it has no isolated points;
        \item away from finitely many singular points, $C_i$ is a connected smooth submanifold with $J$-invariant tangent space.
\end{itemize} 
Each $C_i$ is called an irreducible component of $\Theta$, and the homology class of $\Theta$ is defined to be $\sum m_i[C_i]$. For a fiber class $[C]$, the Seiberg--Witten invariant is nontrivial, with index
\[I_{SW}([C])=[C]^2-K_\omega\cdot[C]=2[C]^2+2=2.\]
It follows that, for any point $x\in X$, there exists a $J$-holomorphic subvariety in class $[C]$ passing through $x$. Now, we investigate the possibly reducible $J$-holomorphic representatives of a fiber class when $[C]$ is $J$-nef. Related results, formulated in terms of ``broken rulings'', also appear in \cite[Section 4]{Markovstaircase}.

\begin{proposition}\label{prop:fiberclass}
  Suppose $F$ is a fiber class of $(X,\omega)$ and $J$ is an $\omega$-tame almost complex structure such that $F$ has an embedded $J$-holomorphic representative. Let $\Theta=\{(C_i,m_i)\}$ be a reducible $J$-holomorphic subvariety in class $F$. Then we have the following.
  \begin{enumerate}[nosep]
  \item The configuration $\Gamma=\cup C_i$ is a conntected tree of embedded spheres.
   \item The intersection matrix of any proper subtree $\Gamma'\subsetneq\Gamma$ is negative definite.
  \item In the configuration $\Gamma$, there must be either at least two $(-1)$-spheres, or one $(-1)$-sphere with multiplicity $m_i\geq 2$. 
\end{enumerate}
\end{proposition}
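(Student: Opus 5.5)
The plan is to use positivity of intersections with the embedded representative together with the adjunction formula. Let $C_F$ be the embedded $J$-holomorphic sphere in class $F$. Since $F^2=0$ and $X$ is rational or ruled (McDuff), the fibers of the $J$-holomorphic ruling through $C_F$ foliate a neighborhood of $C_F$. The class $F$ is then $J$-nef: $F\cdot[C]\ge 0$ for every $J$-curve $C$. Indeed, if $C\neq C_F$ this follows from positivity of intersections, and $F\cdot F=0$.

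For $\Theta$ reducible, $0=F\cdot F=\sum m_i F\cdot[C_i]$ with each term nonnegative, so $F\cdot[C_i]=0$ for all $i$. Hence no $C_i$ equals $C_F$, and each $C_i$ lies in $F^\perp$.

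\textbf{Part (2).} We work in $F^\perp$. Since $b_2^+=1$ and $F^2=0$, the lattice $F^\perp/\langle F\rangle$ is negative definite, and $F^\perp$ is negative semidefinite with radical $\langle F\rangle$. A proper subcollection of the $[C_i]$ spans a lattice whose intersection form is negative semidefinite. Now suppose some nonzero combination $v=\sum_{i\in \Gamma'} x_i[C_i]$ had $v^2=0$. Then $v$ lies in the radical, so $v=tF$. If $t\neq 0$, rescale $v$ and compare with $F=\sum m_i[C_i]$, which involves every component with positive coefficient. Pairing with $[\omega]$, or applying a Zariski-lemma type argument (a semidefinite form on the span of irreducible curves whose null vector has full support), forces the null vector to be supported on all components. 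This contradicts $\Gamma'\subsetneq\Gamma$. If $t=0$ then $v=0$. To rule out that the $[C_i]$ for $i\in\Gamma'$ are linearly dependent, I will apply the same Zariski argument: off-diagonal entries are nonnegative and the full-support vector $(m_i)$ is null. Standard linear algebra for such matrices (a Perron--Frobenius style argument, using connectedness) then gives that every proper principal submatrix is negative definite. Connectedness of $\Gamma$ itself comes first: if $\Gamma$ split into two groups with zero mutual intersections, write $F=A+B$ with $A,B$ supported on the two groups. Then $A\cdot B=0$, $A^2+B^2=0$, and $A^2,B^2\le 0$, so $A^2=B^2=0$. This makes $A$ and $B$ both multiples of $F$ in the radical, and pairing with $[\omega]$ then contradicts primitivity of the fiber class.

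\textbf{Part (1).} Each component must be a sphere. Since $[C_i]^2<0$ by (2) and $K\cdot F=-2$, the adjunction formula gives $\sum m_i(K\cdot[C_i])=-2$. Every $C_i$ with $[C_i]^2\le -1$ has adjunction genus $g_i\ge0$, with $K\cdot[C_i]=2g_i-2-[C_i]^2+2\delta_i$. A cleaner route is to compute the arithmetic genus of $\Theta$, which is $0$. Combined with the connectedness already shown, this forces every component to be a smooth rational curve (no singularities, $g_i=0$), with pairwise intersections of multiplicity at most one and no cycles in the dual graph. That the dual graph is a tree is the standard Zariski/graph argument: a cycle of components would have a null or positive vector on a proper subconfiguration, contradicting (2). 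Transversality and the absence of triple points follow similarly from the genus count. I expect this to be the main obstacle. For nonintegrable $J$ the genus computation must rely on McDuff's adjunction inequality for $J$-curves and on Taubes' subvarieties, and the bookkeeping of singularities and tangencies needs care.

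\textbf{Part (3).} Use $\sum m_i K\cdot[C_i]=-2$ with $K\cdot[C_i]=-2-[C_i]^2$. Spheres with $[C_i]^2\le-2$ contribute nonnegatively, and $[C_i]^2\ge0$ is excluded by (2) when $\Gamma$ has at least two components. Hence the $(-1)$-spheres, each with $K\cdot[C_i]=-1$, must carry total multiplicity at least $2$. So there are either two $(-1)$-spheres, or a single $(-1)$-sphere of multiplicity at least $2$.
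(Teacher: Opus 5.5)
The gap is in part (1). Your ``arithmetic genus $0$ plus connectedness'' argument only works when $\Theta$ is reduced. In that case $0=\sum_i p_i+\sum_{i<j}[C_i]\cdot[C_j]-(n-1)$ with $p_i=g_i+\delta_i\ge 0$, and this does force embedded spheres forming a simple tree. With multiplicities, adjunction only gives
\[
0=1+\sum_i m_i(p_i-1)+\tfrac12\sum_i m_i(m_i-1)[C_i]^2+\sum_{i<j}m_im_j\,[C_i]\cdot[C_j].
\]
The middle term is negative, and it is already nonzero for the fiber $(-2),(-1),(-2)$ with multiplicities $(1,2,1)$. So this identity alone does not force $p_i=0$ or simple edges. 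Your fallback for the tree property is false as stated: a cycle of components need not carry a null or positive vector. For example, a triangle of $(-3)$-spheres is negative definite, so (2) cannot exclude cycles. Thus (1) is not proved, as you suspected. Part (3) does not actually need (1): $K_\omega\cdot[C_i]=2p_i-2-[C_i]^2\ge -1$, with equality only for embedded $(-1)$-spheres.

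The paper does not prove (1) by hand either. It notes that $F$ is $J$-nef and cites \cite[Theorem 1.5]{Jnef} for subvarieties in a $J$-nef class of $J$-genus $0$. You have already shown $J$-nefness, so you can do the same. The paper then derives (2) from the tree structure and the light cone lemma. Your Zariski-lemma argument needs only connectedness, which is a genuine simplification. Two repairs there: drop the claim that pairing with $[\omega]$ forces full support, and justify primitivity of $F$ (if $F=2F'$ then $F'^2+K_\omega\cdot F'=-1$, contradicting Wu).

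If you want (1) self-contained, induct on the number of components:
\begin{itemize}[nosep]
\item By (2) and adjunction, $\sum_i m_i(2-2p_i+[C_i]^2)=2$ with $[C_i]^2\le -1$. So some component $E$ is an embedded $(-1)$-sphere.
\item Work in $E^\perp$ with canonical class $K_\omega-E$, and replace each remaining $[C_i]$ by $[C_i]+k_iE$, where $k_i=E\cdot[C_i]$.
\item Then $F=\sum_{C_i\neq E} m_i([C_i]+k_iE)$. Off-diagonal entries stay nonnegative, connectedness and primitivity persist, $K\cdot F=-2$ is unchanged, and each $p_i$ grows by $k_i(k_i-1)/2$.
\item Continue down to one component, which must be $F$ itself with $p=0$.
\end{itemize}
Unwinding gives $p_i=0$ and $k_i\le 1$ at every step, with $E$ meeting at most two components. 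Hence the configuration is a simple tree of embedded spheres meeting transversally, with no triple points.
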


\begin{proof}
  The first statement follows from \cite[Theorem 1.5]{Jnef} since the assumption guarantees the $J$-nefness of the fiber class $F$. 

  To prove the second statement, suppose there was a nonzero class
\[
A=\sum_{C_i\subseteq \Gamma'} a_i [C_i],
\]
supported on a proper subtree $\Gamma'$, such that $A^2\ge 0$, where all $a_i\neq 0$. Since $F$ admits an embedded representative, we have $F\cdot [C_i]=0$ for all $i$, and hence $F\cdot A=0$. As $b_2^+(X)=1$, the light cone lemma (\cite[Lemma 3.7]{McDufflectures}) implies that $A$ is proportional to $F$.

Now suppose $\Gamma'$ has $m$ connected components, and write
\[
A=A_1+\cdots +A_m,
\]
where each $A_j$ is supported on a single connected component of $\Gamma'$. Since the components are disjoint, we have
\[
A^2=\sum_{j=1}^m A_j^2=0.
\]
Moreover, each $A_j$ satisfies $F\cdot A_j=0$ so that again by light cone lemma each $A_j$ is proportional to $F$, and hence $A_j^2=0$ for all $j$.

Now let $\Gamma''$ be any connected component of $\Gamma\setminus \Gamma'$, and define
\[
B:=\sum_{C_i\subseteq \Gamma''} [C_i].
\]
Since $\Gamma$ is connected and contains no loops by the first statement, there exists some component of $\Gamma'$ that meets $\Gamma''$ in exactly one point. In particular, there is some $A_j$ whose support intersects $\Gamma''$ at a unique component $C_\ell\subseteq \Gamma'$. It follows that
\[
B\cdot A_j = a_\ell \neq 0.
\]
On the other hand, since $A_j$ is proportional to $F$ and $F\cdot B=0$, we must have
\[
B\cdot A_j=0,
\]
which is a contradiction. This shows that no such class $A$ can exist, and hence every subtree $\Gamma'$ must be negative definite.

For the third statement, applying the adjunction formula to the fiber class $F=\sum m_i[C_i]$, we obtain
\[
-2 = K_\omega \cdot F = \sum m_i(-2 - [C_i]^2).
\]

We first claim that $F$ is primitive. Indeed, if $F=2F'$, then
\[
(F')^2 + K_\omega \cdot F' = -1,
\]
which contradicts the Wu formula (\cite[Proposition 1.4.18]{GSbook}), since $K_\omega$ is an integral lifting of $w_2$.

Next, we observe that no component $C_i$ can satisfy $[C_i]^2 \ge 0$. Otherwise, since $F\cdot [C_i]=0$, the light cone lemma would force $[C_i]$ to be proportional to $F$, contradicting the fact that $F$ is primitive and supported on multiple components.

Therefore, all components satisfy $[C_i]^2 \le -1$. Returning to the identity
\[
-2 = \sum m_i(-2 - [C_i]^2),
\]
we note that each term $-2 - [C_i]^2 \ge -1$, with equality if and only if $[C_i]^2=-1$. It follows that in order for the sum to equal $-2$, there must be either at least two components with $[C_i]^2=-1$, or one component with $[C_i]^2=-1$ appearing with multiplicity $m_i \ge 2$.
\end{proof}

\subsection{Building fiber classes from non-negative definite strings}\label{section:resfiberclass}
In this section, we explain how to extract a fiber class from a non-negative definite symplectic string, possibly after a sequence of toric blowups. Given an oriented string $\st=\cup_{i=1}^n S_i\subseteq (X,\omega)$ with negative self-intersection sequence $(b_1,\cdots,b_n)$, one can consider its \emph{negative intersection matrix} \[M(\st):=\begin{pmatrix}
b_1 & -1    & 0    & \cdots & 0 \\
-1    & b_2 & -1    & \cdots & 0 \\
0    & -1    & b_3 & \cdots & 0 \\
\vdots & \vdots & \vdots & \ddots & -1 \\
0    & 0    & 0    & -1      & b_n
\end{pmatrix}.\]
For $1 \le k \le n$, let $M_k(\st)$ denote the $k\times k$ leading principal submatrix of $M(\st)$, and define integers \[\Delta_1:=1,\quad \Delta_l:=\det M_{l-1}(\st), \quad 2\leq l\leq n+1,\] which will satisfy the recurrence relation \[\Delta_{l}=b_{l-1}\Delta_{l-1}-\Delta_{l-2},\quad 3\leq l\leq n+1.\]
One can see that $\gcd(\Delta_l,\Delta_{l+1})=1$ and the continued fraction $[b_l,b_{l-1},\cdots,b_1]=\frac{\Delta_{l+1}}{\Delta_l}$ by induction. Moreover, one checks directly that a toric blowdown or half-toric blowdown leaves the determinant $\det M(\st)$ of the negative intersection matrix of the entire string unchanged. If $\st$ is in addition assumed to be Hirzebruch--Jung, it then follows from the above recurrence relation that $0<\Delta_1<\Delta_2<\cdots<\Delta_{n+1}$ since all $b_i\geq 2$. In particular, this implies that $M(\st)$ is positive definite for HJ string $\st$ by Sylvester's criterion. Equivalently, the intersection matrix $-M(\st)$ (and by abuse of notation, the string $\st$ itself) is negative definite. Conversely, if $\st$ is known to be not negative definite, then there must be some $\Delta_l\leq 0$. The following facts will also be needed later.

\begin{lemma}\label{lem:-1endpoint}
  If $b_1=1$ and $b_i\geq 2$ for any $2\leq i\leq n$, then $\mathcal{S}$ is negative definite.
\end{lemma}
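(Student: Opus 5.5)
Write $M=M(\st)$ for the negative intersection matrix of the string with $b_1=1$ and $b_i\geq 2$ for $2\leq i\leq n$. The plan is to show $M$ is positive definite with Sylvester's criterion. The convenient choice is to work with the trailing principal minors rather than the leading ones, i.e.\ to read the string in the reversed orientation.

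For $1\leq k\leq n$, let $\Delta'_k$ be the determinant of the principal submatrix of $M$ on the last $k-1$ indices $n-k+2,\dots,n$, with the conventions $\Delta'_1=1$ and $\Delta'_{n+1}=\det M$. Expanding along the first row gives the same three-term recurrence as in the text:
\[
\Delta'_{l}=b_{n-l+2}\Delta'_{l-1}-\Delta'_{l-2}, \qquad 3\leq l\leq n+1 .
\]
The coefficients used up to $\Delta'_n$ are $b_n,\dots,b_2$, all at least $2$. The argument already given for HJ strings therefore yields
\[
0<\Delta'_1<\Delta'_2<\cdots<\Delta'_n .
\]
This says that the substring $S_2\cup\dots\cup S_n$ is an HJ string, hence negative definite, and all trailing minors of $M$ up to size $n-1$ are positive.

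It remains to show $\det M>0$. The last step of the recurrence has coefficient $b_1=1$, so
\[
\det M=\Delta'_{n+1}=\Delta'_n-\Delta'_{n-1}>0
\]
by the strict monotonicity above. When $n=1$ the matrix is simply $(1)$, which is positive definite.

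Sylvester's criterion applies equally to the nested family of trailing principal minors; equivalently, one can apply the usual leading-minor criterion after conjugating $M$ by the reversal permutation. Hence $M$ is positive definite and $\st$ is negative definite.

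There is no real obstacle here. The only point requiring care is the choice of nested minors: the leading minors of $M$ start with $b_1=1$ and give no monotonicity, so one must expand from the $S_n$ end.
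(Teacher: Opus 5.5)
Your proof is correct, but it takes a different route from the paper's. The paper performs a half-toric blowdown of the $(-1)$-endpoint $S_1$, which produces the string with negative self-intersection sequence $(b_2-1,b_3,\dots,b_n)$. It observes that negative definiteness is unchanged: the span of the components splits orthogonally as $\langle [S_1]\rangle$, with $[S_1]^2=-1$, plus the span of the blown-down string. It then inducts, stopping either at an HJ string (when $b_2-1\ge 2$) or at a single $(-1)$-sphere. You instead check Sylvester's criterion directly, reading the string from the $S_n$ end. This lets the HJ monotonicity apply to $S_2\cup\dots\cup S_n$, and the last recurrence step gives $\det M=\Delta'_n-\Delta'_{n-1}>0$.

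Your version is purely linear-algebraic and gives an explicit positive formula for $\det M$. The paper's version is shorter and explains the phenomenon structurally: a $(-1)$-endpoint just splits off an orthogonal $(-1)$ summand. This is the same blowdown mechanism the paper reuses in Lemma \ref{lem:adjacent-1} and in the later fiber-class arguments.

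Your closing remark needs a small correction: the leading minors also work. Since $\Delta_1=\Delta_2=1$, the recurrence gives
$\Delta_l-\Delta_{l-1}=(b_{l-1}-1)\Delta_{l-1}-\Delta_{l-2}\ge \Delta_{l-1}-\Delta_{l-2}\ge 0$ for $l\ge 3$.
So the leading minors are non-decreasing and all at least $1$. Reversing the orientation is convenient, since it gives strict monotonicity, but it is not necessary.
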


\begin{proof}
  Let $\mathcal{S}'$ be the string obtained by performing a half-toric blowdown along the $S_1$-sphere. Then $\mathcal{S}$ is negative definite if and only if $\mathcal{S}'$ is. The claim therefore follows by induction.
\end{proof}

\begin{lemma}\label{lem:adjacent-1}
  Assume that for some $1\leq k\leq n$, we have $b_k=1$ and $b_i\neq 1$ for $i\neq k$. Let $\st'$ be the string obtained from $\st$ by successive toric or half-toric blowdowns, with negative self-intersection sequence $(b_1',\cdots,b_m')$ where $m<n$. Then at most two of the $b_i'$ are equal to $1$, and if there are two, they must be adjacent.
\end{lemma}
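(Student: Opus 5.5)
We argue by induction on the number of blowdowns $n-m$, tracking a single invariant of the sequence: \emph{the entries equal to $1$ form a block of at most two adjacent positions, and every other entry is $\ne 1$.} The hypothesis gives exactly one entry equal to $1$, namely $b_k$. So it suffices to show that one toric or half-toric blowdown preserves this invariant.

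First we record the local effect of a single blowdown on the sequence. A blowdown contracts a $(-1)$-sphere, i.e. an entry equal to $1$.
\begin{itemize}[nosep]
\item A \emph{toric} blowdown removes an interior entry $b_j=1$. Its two neighbors $b_{j-1},b_{j+1}$ each decrease by $1$, since each self-intersection increases by $1$. The two neighbors become adjacent.
\item A \emph{half-toric} blowdown removes an endpoint entry $b_1=1$ (or $b_n=1$). Only its unique neighbor decreases by $1$.
\end{itemize}
All other entries are unchanged.

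Now suppose the current sequence satisfies the invariant.

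\emph{Case: a single $1$, at position $j$.} If $j$ is interior, the new neighbors have values $b_{j\pm1}-1$. Since $b_{j\pm1}\ne 1$, these values lie in $\{-1,0\}\cup\{1,2,\dots\}$, and a value equals $1$ only when the old entry was $2$. The only possible new $1$'s are the two entries $b_{j-1}-1$ and $b_{j+1}-1$. These are adjacent after the contraction, while all other entries are untouched and stay $\ne 1$. Hence the invariant holds. If $j$ is an endpoint, at most one new $1$ appears, namely the new endpoint.

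\emph{Case: two adjacent $1$'s, at positions $j,j+1$.} We contract one of them, say $j$; the other choice is symmetric. Its neighbor $b_{j+1}=1$ drops to $0$. If $j$ is interior, the other neighbor $b_{j-1}\ne1$ becomes $b_{j-1}-1$, which may equal $1$. This possible new $1$ is now adjacent to the $0$ entry, and no other entry is $1$. So there is at most one $1$ and the invariant holds.

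Note that entries $\le 0$ are permitted: the statement places no restriction on them, and the hypothesis $b_i\ne1$ rather than $b_i\ge2$ already allows them. The induction only needs the fact that an entry $\ne 1$ can become $1$ only when it is adjacent to a contracted sphere.

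The step requiring care is the bookkeeping of the two-$1$'s case after further blowdowns. If the surviving $1$ is contracted next, it raises the $0$ entry back to $-1$ and possibly turns its other neighbor into a $1$. Again this yields at most two $1$'s, adjacent through the merge. I expect no genuine obstacle beyond the uniform observation that new $1$'s can arise only in the at most two slots flanking a contracted sphere, and that these slots are adjacent after contraction.
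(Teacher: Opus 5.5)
Your proof is correct and is essentially the paper's argument: an induction on the number of blowdowns, using that contracting a $(-1)$-component can only create new entries equal to $1$ in the (at most two) slots flanking it, and those slots become adjacent afterwards. Your explicit treatment of the two-adjacent-$1$'s case (the neighbouring $1$ drops to $0$, so at most one $1$ survives) spells out a step the paper only asserts. Your last paragraph is already covered by the single-$1$ case, and its phrase ``raises the $0$ entry back to $-1$'' is garbled: the negative self-intersection drops from $0$ to $-1$.
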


\begin{proof}
  After blowing down the sphere $S_k$, the resulting string has negative self-intersection sequence $(b_1,\cdots,b_{k-1}-1,b_{k+1}-1,\cdots,b_n)$, so the only entries that can become $1$ are $b_{k-1}-1$ and $b_{k+1}-1$. This proves the claim when $m=n-1$. If we continue by blowing down either the $(b_{k-1}-1)$-component or the $(b_{k+1}-1)$-component, the resulting string still has at most one exceptional component, or else two adjacent exceptional components. The general case $m<n$ therefore follows by induction.
\end{proof}

 One can further define the homology class 
\[F_k:=\sum_{i=1}^k\Delta_i[S_i]\]
and easily check that it enjoys the intersection property 
\[F_k^2=-\Delta_k\Delta_{k+1},\qquad K_\omega\cdot F_k=-\Delta_k+\Delta_{k+1}-1,\qquad F_k\cdot [S_i]=\begin{cases}
  -\Delta_{k+1},\qquad & i=k\\
  \Delta_k,\qquad & i=k+1\\
  0,\qquad & \text{otherwise.}
\end{cases}\]

Assume now that $\Delta_k>0$ and $\Delta_{k+1}\leq 0$; then we obtain a pair of coprime non-negative integers
\(
(p,q):=(-\Delta_{k+1},\Delta_k).
\) To the pair $(p,q)$, one can associate a \emph{weight sequence} $\mathcal{W}(p,q)=(m_1,m_2,\ldots,m_L)$ of positive integers, defined as follows. If $(p,q)=(0,1)$, set $\mathcal{W}(p,q)=\varnothing$. Otherwise, set $(p_1,q_1):=(p,q)$, and whenever $(p_i,q_i)$ is defined with $p_i\neq q_i$, define
$(p_{i+1},q_{i+1}):=(|p_i-q_i|,\min\{p_i,q_i\})$. Since $p$ and $q$ are coprime, there exists $L\in\mathbb{Z}_{+}$ such that $(p_L,q_L)=(1,1)$. For each $i$, define
$m_i:=\min\{p_i,q_i\}$. If we take a rectangular box of size $p\times q$, the weight sequence corresponds to a decomposition of this rectangle into squares of side lengths $m_1,\dots,m_L$ (see \cite[Section 4.1]{McduffSiegel}). By comparing the area and half-perimeter contributions of these squares (the last square of side length $m_L=1$ contributes twice to the half-perimeter), we have
\[pq=\sum_{i=1}^L m_i^2,\qquad p+q=\sum_{i=1}^L m_i+1.\]

There is also a pattern of successive toric blowups associated to the pair $(p,q)\neq (0,1)$. First perform toric blowup at the node $S_k\cap S_{k+1}$ and take the total transform $\st'$ of $\st$. The new string $\st'$ can be denoted by $S_1'\cup\cdots\cup S_k'\cup C_1\cup S_{k+1}'\cup\cdots\cup S_n'$, where $S_i'$ is the proper transform of $S_i$ and $C_1$ is the exceptional component. Next, depending on whether $p_1>q_1$ or $p_1<q_1$, we perform toric blowup at the node $S_k'\cap C_1$ or $C_1\cap S_{k+1}'$ to obtain the string $\st''$ containing a new exceptional component $C_2$. Depending on whether $p_2>q_2$ or $p_2<q_2$, one can further choose the `left' or `right' node to perform toric blowup. Repeat this procedure until we reach at $(p_L,q_L)=(1,1)$ after $L$ blowups. The result will be a symplectic string $\tilde{\st}\subseteq(X\# L\overline{\CP}^2,\tilde{\omega})$ satisfying $l(\tilde{\st})=l(\st)+L$. Denote by $\tilde{C}_i$ the component in $\tilde{\st}$ that comes from the proper transform of the exceptional sphere $C_i$ created by the $i$-th blowup. Let $e_i$ be the exceptional class of the $i$-th blowup. Then we see that except for $\tilde{C}_L=C_L$, all other $\tilde{C}_i$ components must have class $e_i-\sum_{j\in \Lambda_i} e_j$ for some non-empty index set $\Lambda_i\subseteq \{i+1,\cdots,L\}$. In particular, $[\tilde{C}_i]^2\leq -2$ for all $i\leq L-1$.

Motivated by the process of resolving a $(p,q)$-cusp curve singularity, one can introduce the \emph{resolution fiber class}
\[\tilde{F}_k:=F_k-\sum_{i=1}^L m_ie_i\in H_2(X\# L\overline{\CP}^2;\mathbb{Z}).\]
The following lemma was used in \cite{LN25} to construct affine rulings of divisor complements; for the reader's convenience, we sketch its proof below.

\begin{lemma}\label{lem:resolutionfiberclass}
 Suppose $X$ is a rational manifold and the toric blowup symplectic form $\tilde{\omega}$ satisfies $[\tilde{\omega}]\cdot(K_{\tilde{\omega}}-\tilde{F}_k)<0$. Assume $\Delta_1,\cdots,\Delta_k>0,\Delta_{k+1}\leq 0$ and $C_L$ is the only $(-1)$-component in $\tilde{\st}$. Then $\tilde{F}_k$ can be represented by an embedded symplectic sphere of self-intersection $0$ intersecting $\tilde{\st}$ exactly once at the $C_L$-component transversely. In particular, $\tilde{F}_k$ is a fiber class of $(X\# L\overline{\CP}^2,\tilde{\omega})$.
\end{lemma}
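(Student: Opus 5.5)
The plan is to compute the numerics of $\tilde{F}_k$, show it is $\tilde{\st}$-good in the sense of Definition \ref{def:Dgood}, obtain an embedded sphere from Theorem \ref{thm:MO15}, and then read off its intersection pattern with $\tilde{\st}$ from homology and positivity of intersections.

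\textbf{Numerics.} Using $F_k^2=-\Delta_k\Delta_{k+1}=pq$ and $\sum m_i^2=pq$, we get $\tilde F_k^2=0$. Using $K_{\tilde\omega}=K_\omega+\sum e_i$, $K_\omega\cdot F_k=-\Delta_k+\Delta_{k+1}-1=-(p+q)-1$ and $\sum m_i=p+q-1$, we get $K_{\tilde\omega}\cdot\tilde F_k=-2$. Hence $\tilde F_k^2+K\cdot\tilde F_k=-2$ and $I_{SW}(\tilde F_k)=2$. The class $\tilde F_k$ is primitive, since its pairing with $e_L$ is $m_L=1$. Next we compute pairings with $\tilde{\st}$, using the standard resolution-of-cusp bookkeeping. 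Each proper transform $\tilde C_i$ ($i<L$) has class $e_i-\sum_{j\in\Lambda_i}e_j$, with $\Lambda_i$ indexing the exceptional spheres created later at nodes on $C_i$. The recursion $m_i=\sum_{j\in\Lambda_i}m_j$ is precisely the Euclidean-algorithm relation defining the weight sequence, and it gives $\tilde F_k\cdot[\tilde C_i]=0$. The proper transforms $\tilde S_k$ and $\tilde S_{k+1}$ absorb $p$ and $q$ respectively through the sums of the $m_j$ over blowups on them, which kills $F_k\cdot[S_k]=-\Delta_{k+1}=p$ and $F_k\cdot[S_{k+1}]=\Delta_k=q$. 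All other $S_i$ pair to $0$, and $\tilde F_k\cdot[C_L]=m_L=1$. Thus $\tilde F_k$ pairs nonnegatively with every component of $\tilde{\st}$, and positively only with $C_L$, where the pairing is $1$.

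\textbf{SW and exceptional classes.} Since $X$ is rational, $b_2^+=1$. Wall crossing together with $[\tilde\omega]\cdot(K_{\tilde\omega}-\tilde F_k)<0$ shows $SW(\tilde F_k)\ne0$: the class $K-\tilde F_k$ has negative area, so $SW(K-\tilde F_k)=0$, and the rational wall-crossing formula (or the paper's Section 2.2 conventions from \cite{LN25}) with index $\ge0$ gives nonvanishing. The main obstacle is condition (3), namely $\tilde F_k\cdot E\ge0$ for every exceptional $E\neq\tilde F_k$. The first approach I would try is this: $\tilde F_k$ has $SW\ne0$, so for generic $J$ it has a $J$-holomorphic subvariety representative through a point. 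Each exceptional class $E$ also has an embedded $J$-sphere for generic $J$. Positivity of intersections then gives $E\cdot\tilde F_k\ge0$ unless $E$ is a component of the subvariety. In that case I would argue by the square-zero/index count, as in the proof of Proposition \ref{prop:fiberclass} via the light cone lemma and primitivity, that a reducible decomposition can be avoided for generic $J$, since its components have negative index. Alternatively, one can cite the standard fact (Li–Liu) that for $b_2^+=1$ a square-zero class with $K\cdot A=-2$ and $SW\ne0$ pairs nonnegatively with all exceptional classes.

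\textbf{Conclusion.} Theorem \ref{thm:MO15} now gives an embedded $J$-sphere $C$ in class $\tilde F_k$ for some $J\in\mathcal J(\tilde{\st})$. This sphere is not a component of $\tilde{\st}$, since every component of $\tilde{\st}$ has negative square or differs in homology. By positivity of intersections, $C$ is disjoint from every component with pairing $0$ and meets $C_L$ in exactly one transverse point. After perturbing as in Lemma \ref{lem:blowupexceptional} to make the intersection $\tilde\omega$-orthogonal, $C$ is the desired sphere, and $\tilde F_k$ is a fiber class.
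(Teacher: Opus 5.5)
Your numerics, the primitivity check, the pairings with $\tilde{\st}$, the wall-crossing argument for $SW(\tilde F_k)\neq 0$, and the final application of Theorem \ref{thm:MO15} all match the paper's proof. The gap is condition (3) of Definition \ref{def:Dgood}, and neither of your two routes closes it, because neither uses the string $\tilde{\st}$.

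The properties you invoke are $A^2=0$, $K\cdot A=-2$, $A$ primitive, and $[\omega]\cdot(K-A)<0$ (hence $SW(A)\neq 0$). These do not imply $A\cdot E\geq 0$. For example, take $\CP^2\#9\overline{\CP}^2$ with small blowup sizes and $A=3H-E_1-\cdots-E_8+E_9$. This class has all of those properties, yet $A\cdot E_9=-1$.

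So the ``standard fact'' you attribute to Li--Liu is false. Your genericity argument fails for the same reason. For generic $J$, every subvariety in class $A$ must contain the exceptional sphere $C_{E_9}$, which has index $0$, not negative. The remaining class $3H-E_1-\cdots-E_8$ has adjunction genus $1$ and index $2$, so it can carry the point constraint. Index counting alone never rules out exceptional-sphere components.

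The missing ingredient is that $\tilde F_k$ is a nonnegative integral combination of the classes of the components of $\tilde{\st}$. The paper gets this from \cite[Lemma 3.2]{LN25}.

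Combined with the pairings you already computed, this gives what you need. Fix $J\in\mathcal{J}(\tilde{\st})$ and any exceptional class $E$, and take a $J$-holomorphic subvariety in class $E$; it exists since $SW(E)\neq 0$. Each of its components falls into one of two cases:
\begin{itemize}
\item it is a component of $\tilde{\st}$, so it pairs nonnegatively with $\tilde F_k$ by your computation;
\item it is not, so by positivity of intersections it pairs nonnegatively with every component of $\tilde{\st}$, and therefore with $\tilde F_k$.
\end{itemize}
Summing over components gives $\tilde F_k\cdot E\geq 0$, which is condition (3). The rest of your argument then goes through unchanged.
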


\begin{proof}
 It follows from the equalities above that $\tilde{F}_k^2=0$ and $K_{\tilde{\omega}}\cdot \tilde{F}_k=-2$. By \cite[Lemma 4.1.3]{McduffSiegel}, we also know $\tilde{F}_k\cdot [C_L]=1$ and $\tilde{F}_k$ has intersection pairing $0$ with all other components in $\tilde{\st}$. By \cite[Corollary 2.8]{LN25} and the assumption $[\tilde{\omega}]\cdot(K_{\tilde{\omega}}-\tilde{F}_k)<0$, the class $\tilde{F}_k$ has non-trivial Seiberg--Witten invariant. Also, \cite[Lemma 3.2]{LN25} enables us to write $\tilde{F}_k$ as a positive linear combination of components in $\tilde{\st}$. Since the only $(-1)$-component in $\tilde{\st}$ is $C_L$, positivity of intersection implies that $\tilde{F}_k\cdot E\geq 0$ for any $E\in \mathcal{E}_{\tilde{\omega}}$. Hence, the resolution fiber class $\tilde{F}_k$ satisfies all the assumptions for being $\tilde{S}$-good (Definition \ref{def:Dgood}) and Theorem \ref{thm:MO15} implies that there exists an almost complex structure $J\in \mathcal{J}(\tilde{\st})$ such that $\tilde{F}_k$ can be represented by an embedded $J$-holomorphic sphere only intersecting the $C_L$-component once transversely.
\end{proof}

\begin{example}
  Consider a symplectic string $\st=S_1\cup \cdots\cup S_5\subseteq (X,\omega)$ with self-intersection sequence $(-3,-2,-1,-1,-2)$. Then we have $\Delta_1=1,\Delta_2=3,\Delta_3=5,\Delta_4=2,\Delta_5=-3$ and the pair $(p,q)=(-\Delta_5,\Delta_4)=(3,2)$ whose weight sequence $\mathcal{W}(p,q)=(2,1,1)$. Therefore, we can perform three toric blowups to obtain the string \[\tilde{S}_1\cup\tilde{S}_2\cup\tilde{S}_3\cup\tilde{S}_4\cup \tilde{C}_2\cup C_3\cup \tilde{C}_1 \cup \tilde{S}_5\subseteq (X\#3\overline{\CP}^2,\tilde{\omega})\] where $[\tilde{C}_1]=e_1-e_2-e_3,[\tilde{C}_2]=e_2-e_3,[C_3]=e_3$. By Lemma \ref{lem:resolutionfiberclass}, the resolution class \[F:=[S_1]+3[S_2]+5[S_3]+2[S_4]-2e_1-e_2-e_3\] will be a fiber class in $(X\#3\overline{\CP}^2,\tilde{\omega})$. See the figure below.
\end{example}

\begin{figure}[htbp]
  \centering
  \begin{tikzpicture}[x=1cm,y=1cm,line cap=round,line join=round]
    \tikzset{
      string/.style={draw=black!75, line width=0.9pt},
      blow/.style={draw=blue!70, line width=0.9pt},
      fiber/.style={draw=red!80, line width=0.9pt}
    }

    \draw[string] (0,1) -- (1,0);
    \draw[string] (0.8,0) -- (1.8,1);
    \draw[string] (1.6,1) -- (2.6,0);
    \draw[string] (2.4,0) -- (3.4,1);
    \draw[string] (3.1,1) -- (4.2,0);
    \filldraw[black] (3.25,0.85) circle (1.6pt);

    \node at (0.3,0.4) {$S_1$};
    \node at (1.1,0.4) {$S_2$};
    \node at (1.9,0.4) {$S_3$};
    \node at (2.8,0.4) {$S_4$};
    \node at (4,0.4) {$S_5$};
    \node at (1.95,-0.65) {$\st$};

    \begin{scope}[xshift=5.4cm]
     \draw[string] (0,1) -- (1,0);
    \draw[string] (0.8,0) -- (1.8,1);
    \draw[string] (1.6,1) -- (2.6,0);
    \draw[string] (2.4,0) -- (3.4,1);
      \draw[blow] (3.2,1) -- (4.2,0);
      \draw[blow] (4,0) -- (5,1);
      \draw[fiber] (4.5,1.15) -- (4.5,-0.15);
      \draw[blow] (4.8,1) -- (5.8,0);
      \draw[string] (5.6,0) -- (6.6,1);

      \node at (0.3,0.4) {$\tilde{S}_1$};
    \node at (1.1,0.4) {$\tilde{S}_2$};
    \node at (1.9,0.4) {$\tilde{S}_3$};
    \node at (2.8,0.4) {$\tilde{S}_4$};

      \node at (3.70,0.40) {$\tilde{C}_2$};
      \node at (4.25,0.88) {$F$};
      \node at (4.8,0.4) {$C_3$};
      \node at (5.55,0.4) {$\tilde{C}_1$};
      \node at (6.55,0.4) {$\tilde{S}_5$};
      \node at (3.00,-0.65) {$\tilde{\st}$};
    \end{scope}
  \end{tikzpicture}
  \caption{The toric blowups from the preceding example: the original string $\st$ on the left and the blown-up string $\tilde{\st}$ on the right, together with the fiber class $F$.}
  \label{fig:resolution-fiber-example}
\end{figure}

\subsection{Symplectic log Kodaira dimension \texorpdfstring{$-\infty$}{-infty}}\label{sec:LN25}

In this section, we review several key notions and results from \cite{LN25} concerning connected divisors $D$ in a symplectic rational manifold $(X,\omega)$ satisfying $[\omega]\cdot (K_\omega+[D])<0$. Recall first that if $(X,\omega)$ is non-minimal, then Gromov's compactness theorem implies that
\[
\inf\{\omega(E)\mid E\in\mathcal{E}_\omega\}>0,
\]
and this infimum is attained by classes in a nonempty subset $\mathcal{E}_{\mathrm{min}}\subseteq\mathcal{E}_\omega$. To analyze connected divisors with symplectic log Kodaira dimension $-\infty$, \cite{LN25} introduced the following notion, which helps reduce the complexity of the divisor configuration.

\begin{definition}\label{def:quasimin}
    A connected pair $(X,\omega,D)$ is said to be \emph{quasi-minimal} if $b_2(X)\geq 3$ and there exists some $E_{\text{min}}\in\mathcal{E}_{\text{min}}$ so that $[D]\cdot E_{\text{min}}\geq 2$.
\end{definition}

\begin{proposition}[\cite{LN25}]\label{prop:minimalreduction}
  Any pair $(X,\omega,D)$ where $D$ is connected and $[\omega]\cdot(K_{\omega}+[D])<0$ can be reduced to a pair $(\check{X},\check{\omega},\check{D})$, which is either quasi-minimal or satisfies $b_2(\check{X})\leq 2$, by a sequence of toric/non-toric/half-toric/exterior blowdowns.
\end{proposition}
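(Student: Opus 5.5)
We argue by induction on $b_2(X)$, reducing one exceptional sphere at a time. If $b_2(X)\le 2$, or if $(X,\omega,D)$ is already quasi-minimal, there is nothing to prove. Otherwise $b_2(X)\ge 3$, so $X$ is non-minimal: a rational manifold with $b_2\ge 2$ contains exceptional classes, and $[\omega]\cdot(K_\omega+[D])<0$ forces $[\omega]\cdot K_\omega<0$, hence rationality or ruledness. Moreover every $E_{\min}\in\mathcal{E}_{\min}$ satisfies $[D]\cdot E_{\min}\le 1$. The goal is to produce a blowdown of one of the four types (toric, non-toric, half-toric, exterior) to a pair $(X',\omega',D')$ with $D'$ connected and $[\omega']\cdot(K_{\omega'}+[D'])<0$. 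By Lemma \ref{lem:blowdownarea} the adjoint area does not increase under any blowdown, so the negativity is preserved automatically. Since $b_2$ drops by one at each step, the process terminates.

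\textbf{Step 1: a good representative of $E_{\min}$.} Fix $E=E_{\min}$ and a generic $J\in\mathcal{J}(D)$. Because $\omega(E)$ is minimal among exceptional classes, Gromov compactness shows that $E$ cannot degenerate into a nodal curve containing another exceptional component of smaller area. I plan to argue that, for generic $J\in\mathcal{J}(D)$, the class $E$ is represented either by an embedded $J$-sphere $C_E$ not contained in $D$, or by a component of $D$ itself.

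In the first case, positivity of intersections with the $J$-holomorphic components $D_i$ gives $E\cdot[D_i]\ge 0$ for each $i$. Combined with $[D]\cdot E\le 1$, this leaves two possibilities: $C_E$ is disjoint from $D$, or it meets exactly one component transversally once. After a perturbation as in the proof of Lemma \ref{lem:blowupexceptional}, the intersection is $\omega$-orthogonal.
\begin{itemize}[nosep]
\item If $C_E$ is disjoint from $D$, blow it down as an exterior blowdown; $D'$ is the image of $D$, still connected.
\item If $C_E$ meets one component once, perform a non-toric blowdown; the image of $D$ remains connected.
\end{itemize}

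\textbf{Step 2: $E$ is the class of a component of $D$.} Then some $(-1)$-sphere $D_j\subseteq D$ has area $\omega(E_{\min})$. Its valence is $[D_j]\cdot([D]-[D_j])=[D]\cdot E+1\le 2$.
\begin{itemize}[nosep]
\item If $D_j$ has valence $2$, blow it down by a toric blowdown. Its two neighbours become intersecting, so connectedness is kept.
\item If $D_j$ has valence $1$, perform a half-toric blowdown.
\item If $D_j$ has valence $0$, then $D=D_j$ and we use an exterior blowdown to the empty divisor. This case is degenerate and should be excluded or handled separately.
\end{itemize}
In every case the image divisor is a connected symplectic divisor: blowdown of an $\omega$-orthogonal configuration along an orthogonal exceptional sphere stays a divisor.

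\textbf{Main obstacle.} The crux is Step 1: showing that a minimal-area class $E$ not represented inside $D$ has an embedded representative $C_E$ whose intersections with $D$ are controlled by $[D]\cdot E$ alone. Multiple components of $D$ could a priori absorb parts of a degenerate representative. I would handle this by the McDuff--Opshtein criterion (Theorem \ref{thm:MO15}): check that $E$ is $D$-good. Condition (4) requires $E\cdot[D_i]\ge 0$ for each $i$. If this fails for some $D_i$ with $D_i\cdot E<0$, then $D_i$ must be $J$-holomorphic in any degeneration of $E$, and one shows that $E-[D_i]$ is effective of smaller area. This should force $D_i$ itself to be an exceptional sphere of area at most $\omega(E)$, hence equal to $E$ by minimality, which lands us in Step 2.
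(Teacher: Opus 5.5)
You have the right strategy: repeatedly blow down a minimal-area exceptional class $E_{\min}$. You read off the blowdown type from $[D]\cdot E_{\min}\le 1$ and the valence identity $[D_j]\cdot([D]-[D_j])=[D]\cdot E_{\min}+1$, and you keep the adjoint area negative via Lemma \ref{lem:blowdownarea}. This is the minimal-reduction scheme the paper relies on.

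\textbf{The gap.} It sits exactly at the step you flag as the crux. You need an \emph{embedded} $J$-holomorphic representative of $E_{\min}$ for some $J\in\mathcal{J}(D)$, and your argument does not supply one.
\begin{itemize}
\item Genericity inside $\mathcal{J}(D)$ does not help. $J$ is frozen near $D$, so a representative of $E_{\min}$ can bubble off components of $D$.
\item McDuff--Opshtein (Theorem \ref{thm:MO15}) applies only once you know $E_{\min}\cdot[D_i]\ge 0$ for all $i$, which is exactly what is in question.
\item Your fallback does not follow. From $E_{\min}\cdot[D_i]<0$ you correctly get that $D_i$ is a component of every $J$-representative and that $E_{\min}-m[D_i]$ is effective of smaller area. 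You then claim this ``should force $D_i$ to be an exceptional sphere''; nothing forces that. In the model degeneration $E_1=(E_1-E_2)+E_2$, the component pairing negatively with $E_1$ is a $(-2)$-sphere. Minimality is violated by the \emph{other} component $E_2$.
\item Your Gromov-compactness remark only shows that a degeneration cannot contain an exceptional component of smaller area. It does not show that a reducible representative of $E_{\min}$ \emph{must} contain such a component, or otherwise contradict minimality. That converse is the real content, and it is not a formal consequence of minimality.
\end{itemize}

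\textbf{The missing ingredient.} It is Pinsonnault's lemma, quoted in Section \ref{sec:LN25}: when $b_2(X)\ge 3$, each $E_{\min}\in\mathcal{E}_{\min}$ is represented by an embedded $J$-sphere for \emph{every} $\omega$-tame $J$, in particular every $J\in\mathcal{J}(D)$. The hypothesis $b_2(X)\ge 3$ holds at every step of your induction where you actually blow down. With this lemma your dichotomy is immediate:
\begin{itemize}
\item either $C_{E_{\min}}$ is a component of $D$, which is your Step 2;
\item or $C_{E_{\min}}$ is an irreducible curve not contained in $D$. Then positivity of intersections gives $E_{\min}\cdot[D_i]\ge 0$ for all $i$, and with $[D]\cdot E_{\min}\le 1$ you land in the exterior and non-toric cases of your Step 1.
\end{itemize}
The valence-$0$ case $D=C_{E_{\min}}$ is harmless. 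Blowing it down is an exterior blowdown to the empty divisor, after which you keep blowing down exterior spheres until $b_2\le 2$. Replacing the heuristic in your last paragraph by Pinsonnault's lemma (or an actual proof of it) turns your proposal into a complete argument.
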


The following structural result for quasi-minimal pairs was proved in \cite[Section 5]{LN25} using the curve cone theorem for almost complex $4$-manifolds developed by Zhang in \cite{Zhangcurvecone}. 

\begin{proposition}[\cite{LN25}]\label{prop:-K-D=Emin}
   Let $(X,\omega,D)$ be a connected pair with $[\omega]\cdot(K_{\omega}+[D])<0$ where $X$ is a rational manifold. If it is quasi-minimal with $[D]\cdot E_{\text{min}}\geq 2$ for some $E_{\text{min}}\in\mathcal{E}_{\text{min}}$, then $E_{\text{min}}=-[D]-K_{\omega}$ and $E_{\text{min}}\cdot [D]=2$.
\end{proposition}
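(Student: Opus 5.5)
The plan is to apply the curve cone description of Zhang (\cite{Zhangcurvecone}) to a generic $J\in\mathcal{J}(D)$ and to combine it with the minimality of $\omega(E_{\min})$.

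\textbf{Step 1: find an ``anticanonical'' effective class.} Since $X$ is rational with $b_2(X)\geq 3$ and $[\omega]\cdot(K_\omega+[D])<0$, I would consider $A:=-K_\omega-[D]$. Its area is positive, and I would check that $SW(A)\neq 0$ using the wall-crossing/positivity criterion \cite[Corollary 2.8]{LN25}: for rational manifolds, $SW(A)\neq0$ once $\omega(K_\omega-A)<0$ and the index is nonnegative. Here $\omega(K_\omega-A)=\omega(2K_\omega+[D])<0$, which follows from $\omega(K_\omega)<0$ together with the hypothesis. Consequently, for every $J\in\mathcal{J}(D)$, the class $A$ is represented by a $J$-holomorphic subvariety $\Theta$.

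\textbf{Step 2: intersect $\Theta$ with $E_{\min}$.} Choose $J$ generic in $\mathcal{J}(D)$, so that $E_{\min}$ has an embedded $J$-holomorphic representative $C_{E}$. This is possible because of area minimality: a minimal-area exceptional class cannot degenerate, since any degeneration would contain an exceptional or negative curve of smaller area. From $K_\omega\cdot E_{\min}=-1$ we get
\[A\cdot E_{\min}=1-[D]\cdot E_{\min}\leq -1.\]
Thus $\Theta$ must contain $C_E$ as a component with some multiplicity $m\geq1$. Write $A=mE_{\min}+R$, where $R$ is effective and does not contain $C_E$, so that $R\cdot E_{\min}\geq0$. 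This gives
\[A\cdot E_{\min}=-m+R\cdot E_{\min}\geq -m.\]

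\textbf{Step 3: use area minimality to force $R=0$ and $m=1$.} I would show that every effective class $R$ in a rational manifold with $b_2\ge3$ either vanishes or has area at least $\omega(E_{\min})$. Irreducible curves of negative square are spheres in $\mathcal{E}$ or have square $\le -2$, and one must compare their areas with those of exceptional classes. The curve cone theorem expresses the cone as the span of $K$-nonnegative classes together with exceptional rays. So the areas of components of $R$ either dominate $\omega(E_{\min})$ or pair nonnegatively with $K_\omega$. Next, pairing with $K_\omega$ gives
\[-K_\omega\cdot A=K_\omega^2+K_\omega\cdot[D].\]
I would combine this with an adjunction count on the connected divisor $D$, whose components are spheres or positive-genus surfaces with $K\cdot[D]+[D]^2=2g-2$ summed over components. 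The aim is to show that the quantity $R\cdot E_{\min}$ and the extra multiplicity are both zero. The cleanest route is a contradiction argument: if $m\ge2$ or $R\ne0$, then $\omega(A)\ge 2\omega(E_{\min})$ or $\omega(R)\geq \omega(E_{\min})$. One then blows down $C_E$, reducing $b_2$ while preserving the hypotheses by Lemma~\ref{lem:blowdownarea}. After that reduction the inequality $[D]\cdot E_{\min}\ge2$ is contradicted by a residual class of area below the minimum.

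\textbf{Main obstacle.} The hardest step is Step 3: ruling out extra components $R$ of $\Theta$, which are curves not contained in $D$ but with area possibly smaller than $\omega(E_{\min})$. This is exactly where Zhang's curve cone theorem is needed. I would first try to decompose $R$ via the cone, use $R\cdot K_\omega$ and $R\cdot E_{\min}\ge0$ to show $\omega(R)<\omega(E_{\min})$ forces $R$ to be $K$-positive, and then exclude that using $b_2\ge3$.

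Once $A=E_{\min}$ is established, the value $[D]\cdot E_{\min}=2$ follows immediately from $-1=E_{\min}^2=A\cdot E_{\min}=1-[D]\cdot E_{\min}$.
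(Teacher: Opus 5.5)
\textbf{There is a genuine gap.} Steps 1 and 3 both fail as written, and Step 3 is where all the content of the statement lies.

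\textbf{Step 1.} The wall-crossing criterion needs $I_{SW}(A)\ge 0$, and you only check the area condition. For a connected divisor whose components have genus $g_i$, one has $[D]^2+K_\omega\cdot[D]=2p_a(D)-2$, where $p_a(D)=\sum_i g_i+\#\{\text{nodes}\}-\#\{\text{components}\}+1\ge 0$. Hence $A^2+K_\omega\cdot A=2p_a(D)-2$ and $I_{SW}(A)=2A^2+2-2p_a(D)$. Nonnegativity therefore means $A^2\ge p_a(D)-1$, which is not available beforehand. In the end it holds with equality ($A^2=-1$, $p_a(D)=0$), so Step 1 presupposes a substantial part of the conclusion. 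A sounder starting point is $N:=K_\omega+[D]+E_{\min}=E_{\min}-A$. One computes $I_{SW}(N)=2p_a(D)+2E_{\min}\cdot[D]-4\ge 0$, which is where connectedness genuinely enters. Also $\omega(K_\omega-N)=-\omega([D]+E_{\min})<0$. So $N$ is either $0$ or $J$-effective with $\omega(N)=\omega(K_\omega+[D])+\omega(E_{\min})<\omega(E_{\min})$. The whole difficulty is then to show $N=0$. The paper does not prove this statement itself; it defers to \cite[Section 5]{LN25}, where Zhang's curve cone theorem is used.

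\textbf{Step 3.} None of the proposed mechanisms works.
\begin{itemize}
\item The claim that every nonzero effective class has area at least $\omega(E_{\min})$ is false. In $\CP^2\#2\overline{\CP}^2$ with $\omega(H)=1$, $\omega(E_1)=0.3$, $\omega(E_2)=0.29$, one has $\omega(E_{\min})=0.29$. Yet a symplectic $(-2)$-sphere in class $E_1-E_2$ has area $0.01$, and it can be a component of $D$, hence $J$-holomorphic for every $J\in\mathcal{J}(D)$. The curve cone theorem only says that components of area below $\omega(E_{\min})$ are $K_\omega$-nonnegative, which gives no area bound.
\item The inequality $\omega(A)\ge 2\omega(E_{\min})$ contradicts nothing, because the hypotheses give no upper bound on $\omega(A)$.
\item The blowdown of $C_E$ is not covered by Lemma~\ref{lem:blowdownarea}. $C_E$ meets $D$ (or $D\setminus C_E$, if $C_E\subset D$) with multiplicity at least $2$ (resp.\ $3$), so it is none of the four blowdown types. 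The image of $D$ is no longer a symplectic divisor, and $\omega'(K_{\omega'}+[D'])=\omega(K_\omega+[D])+(E_{\min}\cdot[D]-1)\,\omega(E_{\min})$. So the hypothesis is not preserved; in the situation of the conclusion this quantity becomes exactly $0$.
\item Most tellingly, connectedness of $D$ is never used in Steps 2--3, yet the statement fails without it. Take the disconnected divisor $D=\st_a\cup\st_b\cup\st_c$ resolving $\CP(2,3,5)$. Propositions~\ref{prop:onlyif} and~\ref{prop:HJandEmin} give $\omega(K_\omega+[D])<0$ and $E_{\min}\cdot[D]=2$. But $A=[N_a]+[N_b]+[N_c]$ has $K_\omega\cdot A\le -3$, so $A\ne E_{\min}$ and your $R$ is nonzero. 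Here Steps 1--2 even go through, since $A$ is represented by $N_a\cup N_b\cup N_c$.
\end{itemize}
Any argument that excludes $R\neq 0$ must therefore use $p_a(D)\ge 0$, and your sketch does not say how.
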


A useful fact, first appearing in \cite[Lemma 1.2]{Pinsonnault}, states that when $b_2(X)\geq 3$, the class $E_{\text{min}}$ always admits an embedded $J$-holomorphic representative $C_{E_{\text{min}}}$ for \emph{any} $\omega$-tame almost complex structure $J$. Therefore, for a quasi-minimal pair $(X,\omega,D)$ with $[\omega]\cdot(K_{\omega}+[D])<0$, after choosing a divisor-adapted almost complex structure $J\in \mathcal{J}(D)$, one of the following two cases occurs.
\begin{enumerate}[nosep]
  \item $C_{E_{\text{min}}}$ is not a component of $D$: $(X,\omega,D)$ is called \emph{quasi-minimal of first kind}.
  \item $C_{E_{\text{min}}}$ is a component of $D$: $(X,\omega,D)$ is called \emph{quasi-minimal of second kind}.
\end{enumerate}

Based on this dichotomy, \cite[Section 5]{LN25} further established the following refined structural results for the two kinds.

\begin{proposition}[\cite{LN25}]\label{prop:twokinds}
  For connected pair $(X,\omega,D)$ with $[\omega]\cdot(K_\omega+[D])<0$,
  \begin{itemize}[nosep]
    \item if it is quasi-minimal of first kind, then $D$ must be a string and $D\cup C_{E_{\text{min}}}$ will be a cycle of spheres (i.e. log Calabi-Yau divisor);
    \item if it is quasi-minimal of second kind and $b_2(X)\geq 4$, then it can be reduced to a pair with $b_2=2$ by successive toric/non-toric/half-toric/exterior blowdowns.
  \end{itemize}
\end{proposition}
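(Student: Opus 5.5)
For the first item, I would start from Proposition \ref{prop:-K-D=Emin}, which gives $E_{\text{min}}=-[D]-K_\omega$ and $E_{\text{min}}\cdot[D]=2$, so that $[D]+E_{\text{min}}=\operatorname{PD}(c_1(\omega))$. Fix $J\in\mathcal{J}(D)$ and take the embedded $J$-holomorphic representative $C=C_{E_{\text{min}}}$ (Pinsonnault's lemma, valid since $b_2(X)\ge 3$). Because $C$ is not a component of $D$, positivity of intersections gives $E_{\text{min}}\cdot[D_i]\ge 0$ for every component, with total $2$. So $C$ meets $D$ in one of three ways: at two distinct points of two different components; at two points of one component; or at a single point, either with contact order $2$ or at a node of $D$.

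I would first reduce to the transverse, non-nodal case. Exceptional classes have embedded representatives for every $J$, so I can choose $J$ generic inside $\mathcal{J}(D)$. Tangency with a fixed component, and passing through a node of $D$, are positive-codimension conditions on the moduli space of the exceptional curve, so they can be avoided by perturbing $J$ away from $\overline{\mathcal{N}}(D)$. Making this genericity statement precise is where I expect the main technical work in this item; if it fails I would instead run the arithmetic-genus count below with the $\delta$-invariant of a tacnode and derive a contradiction. After \cite[Corollary 3.4]{LiUsher} and \cite[Lemma 2.3]{Gompf}, $D\cup C$ is then a symplectic divisor with $[D\cup C]=\operatorname{PD}(c_1(\omega))$, i.e. it is log Calabi--Yau. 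It is not a single torus, since it has at least two components. By \cite[Lemma 3.1]{Limak}, $D\cup C$ is therefore a cycle of spheres. A cycle with one vertex $C$ removed is a string, so $D$ is a string, provided $C$ meets two distinct components. A single self-intersecting loop at one component is ruled out, because a cycle of embedded spheres in the sense of \cite{Limak} has at least two components meeting $C$, or alternatively by the count $E_{\text{min}}\cdot[D_i]\le 1$ forced by the cycle structure.

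For the second item, $C=C_{E_{\text{min}}}$ is a component of $D$, so $E_{\text{min}}\cdot[D\setminus C]=E_{\text{min}}\cdot[D]+1=3$. I would argue by induction on $b_2(X)$ and show that some blowdown strictly lowers $b_2$ while preserving connectedness and the condition $[\omega]\cdot(K_\omega+[D])<0$; the latter comes for free from Lemma \ref{lem:blowdownarea}. I then separate cases according to how $C$ meets the rest of $D$:
\begin{enumerate}[nosep]
\item If $C$ meets at most two other components, a toric or half-toric blowdown along $C$ yields a connected divisor in a manifold with $b_2$ one smaller.
\item If $C$ meets three components, blowing down $C$ would create a triple point. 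In that case I would look for a different exceptional sphere to contract, namely an exceptional sphere among the neighbouring components, or an exterior one found via Lemma \ref{lem:blowupexceptional} and Theorem \ref{thm:MO15}. The minimality of $\omega(E_{\text{min}})$ should prevent such a sphere from meeting $C$ badly.
\end{enumerate}
After each blowdown I apply Proposition \ref{prop:minimalreduction} again. The new pair is then either quasi-minimal of the first kind, where the first item applies and should lead to a contradiction with $b_2\ge 4$ unless a further reduction is possible, or quasi-minimal of the second kind with smaller $b_2$, or already has $b_2\le 2$. The induction terminates at $b_2=2$; the case $b_2=3$ must be handled directly.

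I expect the main obstacle to be the case where $C$ has three neighbours. There I would first try to use Zhang's curve cone theorem \cite{Zhangcurvecone}, as in the proof of Proposition \ref{prop:-K-D=Emin}, to produce a second extremal exceptional class disjoint from $C$. Contracting that class keeps $D$ connected and lowers $b_2$.
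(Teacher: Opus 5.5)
The paper does not prove this proposition; it quotes it from \cite[Section 5]{LN25}, so your argument has to stand on its own.

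\textbf{First item.} This part is fine. After a local perturbation of $C_{E_{\text{min}}}$, as in the proof of Lemma \ref{lem:blowupexceptional}, $D\cup C_{E_{\text{min}}}$ is a connected log Calabi--Yau divisor, hence a cycle, so $D$ is a string. You can skip the genericity discussion. Adjunction for a component $D_j$ of $D$, together with $-K_\omega=[D]+E_{\text{min}}$, gives $2g_j-2=-\sum_{i\neq j}[D_i]\cdot[D_j]-E_{\text{min}}\cdot[D_j]$. So every component is a sphere of valence $2-E_{\text{min}}\cdot[D_j]\le 2$; a torus would be isolated and disjoint from $C$, which is impossible. Hence $D$ has $l(D)-1$ intersection points and is a string. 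Also, $C$ meeting one component twice need not be excluded: that is a cycle of length two, and $D$ is a single sphere.

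\textbf{Second item: the gap.} You correctly compute $E_{\text{min}}\cdot[D\setminus C]=3$. But this means $C$ \emph{always} has three intersection points with the rest of $D$. Blowing $C$ down therefore always produces a triple point or a non-embedded component, so it is never a toric or half-toric blowdown. Your case 1 is thus empty; for instance, if $C$ meets one component twice and another once, the image is not a divisor at all. The whole proof then rests on case 2, where you only state expectations. The closing induction is also unsupported: you give no reason why the reduction cannot stop at $b_2=3$, or at a quasi-minimal pair of first kind with $b_2\ge 3$.

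\textbf{The missing idea.} Use $-K_\omega=[D]+E_{\text{min}}$ with $C\subseteq D$ in adjunction. For $D_j\neq C$ this gives $2g_j-2=-\sum_{D_i\neq C,D_j}[D_i]\cdot[D_j]-2E_{\text{min}}\cdot[D_j]$. Consequences:
\begin{itemize}
\item Every component meeting $C$ is a sphere that meets $C$ exactly once and meets no other component.
\item By connectedness, $D$ is a star: $C$ together with three arms $D_1,D_2,D_3$, so $l(D)=4$ and $-K_\omega=2E_{\text{min}}+[D_1]+[D_2]+[D_3]$.
\end{itemize}
With this structure the available moves become explicit:
\begin{itemize}
\item A $(-1)$-arm is half-toric. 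Blowing it down turns $C$ into a square-zero component representing $-K-[D]$, so the new pair is no longer quasi-minimal.
\item Any $D$-good exceptional class $E'\neq E_{\text{min}}$, represented via Theorem \ref{thm:MO15}, satisfies $1=2E'\cdot E_{\text{min}}+\sum_j E'\cdot[D_j]$ with all terms non-negative. So it is non-toric, meeting exactly one arm once. Contracting it reproduces a star of the same type, with $E_{\text{min}}$ still of minimal area.
\end{itemize}
What your proposal still lacks is an argument that, when $b_2(X)\ge 4$, one of these moves is always available, and that iterating them reaches $b_2=2$.
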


\begin{remark}
  In \cite{LN25}, the notion \emph{`partially minimal'} was introduced to further reduce the complexity for a quasi-minimal pair. The partially minimal condition forbids the existence of non-toric or toric exceptional spheres. It was shown that quasi-minimal pairs of second kind with $b_2(X)\geq 4$ can never be partially minimal.
\end{remark}

\section{Symplectic minimal resolutions of weighted projective planes}\label{section:WP}

Our aim in this section is to prove the `only if' part of Theorem \ref{thm:main}. We start by reviewing some standard facts about resolution of cyclic quotient singularity and refer to Evans' book \cite[Section 4.5]{Evansbook} and his blog \cite{EvansBlog} for an introduction from the symplectic toric perspective.

Let $(r,p,q)$ be positive integers with $1\leq p,q\leq r-1$. Recall that a cyclic quotient singularity $Y$ is said to be of type
\(
  \frac{1}{r}(p,q)
\)
if it is locally isomorphic to the standard model
\(
  \C^{2}/\Gamma_{r,p,q},
\)
where $\Gamma_{r,p,q}\cong\Z_{r}$ is generated by
\[
  (u,v)\longmapsto (\zeta^p u,\zeta^{q}v),\qquad \zeta=e^{2\pi i/r},\quad \gcd(r,p)=\gcd(r,q)=1.
\]
 Two parameters $q,q'\in\{1,\dots,r-1\}$ determine the same isomorphism type $\frac{1}{r}(p,q)\cong\frac{1}{r}(1,q')$ if and only if
\(
  qq'\equiv p\pmod r.
\)
Indeed, if $\Gamma_{r,p,q}=\langle g\rangle$ with
\(
  g(u,v)=(\zeta^p u,\zeta^q v),
\)
then the coordinate swap
\(
  \sigma(u,v)=(v,u)
\)
conjugates this action to
\(
  \sigma g \sigma^{-1}(u,v)=(\zeta^q u,\zeta^p v).
\)
Setting $\eta:=\zeta^q$ (still a primitive $r$-th root) and using $qq'\equiv p\pmod r$, we get
\(
  \zeta=\eta^{q'},
\)
so the conjugated action is generated by
\(
  (u,v)\mapsto(\eta u,\eta^{q'}v),
\)
which is exactly the local model of $\frac{1}{r}(1,q')$. 

Assume $p=1$. Let
\[
\frac{r}{q} = [b_1,\dots,b_k]
\]
be the negative continued fraction expansion with $b_i\ge 2$. Then the $\frac{1}{r}(1,q)$-type singularity $Y$ admits a minimal resolution $\tilde{Y}\rightarrow Y$ whose exceptional locus consists of a Hirzebruch--Jung string
\(
\st=S_1 \cup \cdots \cup S_k,
\)
with
\[
[S_i]^2 = -b_i, \qquad [S_i]\cdot [S_{i+1}]=1,
\]
and no other intersections. For the singularity type $\frac{1}{r}(1,q')$ with $qq'\equiv 1\pmod r$, the minimal resolution is the same Hirzebruch--Jung string with the orientation reversed by the standard duality phenomenon of continued fractions.

We equip $\C^2$ with the standard symplectic form and the Poisson-commuting Hamiltonians
\[
H_1(u,v)=\tfrac12|u|^2,
\qquad
H_2(u,v)=\tfrac12|v|^2.
\]
Since both are $\Gamma_{r,1,q}$-invariant, they descend to the quotient orbifold $\C^2/\Gamma_{r,1,q}$ (still denoted by $H_1,H_2$). To obtain an effective Hamiltonian $T^2$-action on $\C^2/\Gamma_{r,1,q}$, we choose the moment map
\[
\mu:\C^2/\Gamma_{r,1,q}\to \R^2,
\qquad
\mu(u,v)=\bigl(H_2,\tfrac1r(H_1+qH_2)\bigr).
\]
Its image is the non-Delzant corner
\[
\mathcal{O}_{r,q}:=\{(x,y)\in\R^2\mid x\ge 0,\ y\ge \tfrac{q}{r}x\}.
\]
In this toric picture, the HJ string $\st$ arises from the corner-chopping procedure below, which resolves the non-Delzant corner $\mathcal{O}_{r,q}$ into a Delzant one.
\begin{itemize}[nosep]
  \item First truncate the corner $\mathcal{O}_{r,q}$ by a horizontal line $y=\varepsilon$, producing two new vertices
  \(
    V_1=(0,\varepsilon), V_1'=(\frac{r}{q}\varepsilon,\varepsilon).
  \)
  The vertex $V_1$ is Delzant. If $V_1'$ is also Delzant, then the process stops.
  \item Otherwise, apply an $\text{AGL}(2;\Z)$-transformation sending a neighborhood of $V_1'$ to the corner $\mathcal{O}_{q,b_1q-r}$. Truncate again by a horizontal line to obtain vertices $V_2$ and $V_2'$, where $V_2$ is Delzant. If $V_2'$ is Delzant, stop; otherwise, repeat the same procedure at $V_2'$.
  \item After finitely many steps, we obtain vertices $V_k$ and $V_k'$ that are both Delzant. The original non-Delzant corner is then replaced by an open Delzant polygon with $k$ compact edges, corresponding to the spheres $S_1,\cdots,S_k$.
\end{itemize}

 Now, let $(a,b,c)$ be pairwise coprime positive integers such that $a<b<c$. Consider the symplectic orbifold $\CP(a,b,c)$ which has three isolated cyclic quotient singularities of types
\[
\frac{1}{a}(b,c)\cong\frac{1}{a}(1,a_b)\cong\frac{1}{a}(1,a_c), \quad \frac{1}{b}(a,c)\cong\frac{1}{b}(1,b_a)\cong\frac{1}{b}(1,b_c), \quad \frac{1}{c}(a,b)\cong\frac{1}{c}(1,c_b)\cong\frac{1}{c}(1,c_a),
\]
where $0<a_b,a_c<a,0<b_a,b_c<b,0<c_a,c_b<c$ are chosen such that
\[
a_b b\equiv c,
\,
a_c c\equiv b\pmod a,
\quad
b_a a\equiv c,
\,
b_c c\equiv a\pmod b,
\quad
c_a a\equiv b,
\,
c_b b\equiv a\pmod c.
\]
Recall that, as a symplectic toric orbifold, $\CP(a,b,c)$ is constructed by symplectic reduction as the $S^1$-quotient of the boundary of some ellipsoid
  \[\{(z_0,z_1,z_2)\,|\,\frac{1}{2}(a|z_0|^2+b|z_1|^2+c|z_2|^2)=\tau\}/S^1\] for some constant $\tau$.
  The residual $T^{2}$-action endows $\WCP(a,b,c)$ with a toric structure whose moment polygon is a labeled triangle embedded in $\{(x,y,z)\in \R^{3}\,|\,x,y,z\geq 0\}$ satisfying
\[
 ax+by+cz=\tau.
\]
After rescaling, we set $\tau=abc$ and choose certain convenient planar presentations of this triangle with vertices
\[
  A=(bc,0,0),\qquad B=(0,ac,0),\qquad C=(0,0,ab)
\]
 by some integral affine transformation. The integral basis $(\vec{m}_i,\vec{n}_i)$ for the rank-$2$ lattice orthogonal to $(a,b,c)^\text{T}$ can be chosen as follows for each $1\leq i \leq 6$:
 \[\vec{m}_1=\begin{pmatrix}\frac{a_bb-c}{a}\\-a_b\\1\end{pmatrix}
  \vec{m}_2=\begin{pmatrix}\frac{a_cc-b}{a}\\1\\-a_c\end{pmatrix}
  \vec{m}_3=\begin{pmatrix}-b_a\\ \frac{b_aa-c}{b}\\1\end{pmatrix}
   \vec{m}_4=\begin{pmatrix}1\\ \frac{b_cc-a}{b}\\-b_c\end{pmatrix}
    \vec{m}_5=\begin{pmatrix}-c_a\\1\\ \frac{c_aa-b}{c}\end{pmatrix}
    \vec{m}_6=\begin{pmatrix}1\\ -c_b\\ \frac{c_bb-a}{c}\end{pmatrix}\]
 \[\vec{n}_1=\begin{pmatrix}-b\\a\\0\end{pmatrix}
 \vec{n}_2=\begin{pmatrix}-c\\0\\a\end{pmatrix}
 \vec{n}_3=\begin{pmatrix}b\\-a\\0\end{pmatrix}
 \vec{n}_4=\begin{pmatrix}0\\-c\\b\end{pmatrix}
 \vec{n}_5=\begin{pmatrix}c\\0\\-a\end{pmatrix}
 \vec{n}_6=\begin{pmatrix}0\\c\\-b\end{pmatrix}.\] 
Under the basis $(\vec{m}_1,\vec{n}_1)$, $\overrightarrow{AB}=c\vec{n}_1,\overrightarrow{AC}=ab\vec{m}_1+a_bb\vec{n}_1$. Thus, the moment map image can be presented as the triangle $\mathcal{T}_1\subseteq \R^2$ with vertices
\[A_1=(0,0),\quad B_1=(0,c),\quad C_1=(ab,a_bb)\]
where $A_1,B_1,C_1$ correspond to $A,B,C$ respectively. Similarly, by using other basis $(\vec{m}_i,\vec{n}_i)$ for $2\leq i\leq 6$, we may obtain different presentation $\mathcal{T}_i\subseteq \R^2$ with vertices
\begin{align*}
  A_2=(0,0),\quad C_2=(0,b),\quad B_2=(ac,a_cc)\\
  B_3=(0,0),\quad A_3=(0,c),\quad C_3=(ba,b_aa)\\
  B_4=(0,0),\quad C_4=(0,a),\quad A_4=(bc,b_cc)\\
  C_5=(0,0),\quad A_5=(0,b),\quad B_5=(ca,c_aa)\\
  C_6=(0,0),\quad B_6=(0,a),\quad A_6=(cb,c_bb)
\end{align*}
where $A_i,B_i,C_i$ correspond to $A,B,C$ respectively.
\begin{example}
Let us take $(a,b,c)=(2,3,5)$ and consider the toric structure on $\CP(2,3,5)$. Then
\[
a_b=a_c=1,\qquad b_a=b_c=1,\qquad c_a=c_b=4.
\]
The corresponding moment polygons in $\R^2$ are illustrated in Figure~\ref{fig:moment-polygons-235}.
\begin{figure}[htbp]
  \centering
  \resizebox{0.9\textwidth}{!}{%
  \begin{tikzpicture}[x=0.52cm,y=0.52cm,line cap=round,line join=round,font=\small]
    \tikzset{
      tri/.style={draw=black!65,fill=black!12,line width=0.9pt},
      pt/.style={circle,fill=black,inner sep=1.5pt}
    }

    \coordinate (A1) at (0,0);
    \coordinate (B1) at (0,5);
    \coordinate (C1) at (6,3);
    \filldraw[tri] (A1)--(B1)--(C1)--cycle;
    \node[pt,label={[below left]$A_1(0,0)$}] at (A1) {};
    \node[pt,label={[above left]$B_1(0,5)$}] at (B1) {};
    \node[pt,label={[right]$C_1(6,3)$}] at (C1) {};
    \node at (1.8,2.5) {$\mathcal{T}_1$};

    \coordinate (A2) at (8,7);
    \coordinate (C2) at (8,10);
    \coordinate (B2) at (16,12);
    \filldraw[tri] (A2)--(C2)--(B2)--cycle;
    \node[pt,label={[below left]$A_2(0,0)$}] at (A2) {};
    \node[pt,label={[above left]$C_2(0,3)$}] at (C2) {};
    \node[pt,label={[right]$B_2(10,5)$}] at (B2) {};
    \node at (10.2,9.4) {$\mathcal{T}_2$};

    \coordinate (B3) at (17,5);
    \coordinate (A3) at (17,10);
    \coordinate (C3) at (22.5,7);
    \filldraw[tri] (B3)--(A3)--(C3)--cycle;
    \node[pt,label={[below]$B_3(0,0)$}] at (B3) {};
    \node[pt,label={[above]$A_3(0,5)$}] at (A3) {};
    \node[pt,label={[right]$C_3(6,2)$}] at (C3) {};
    \node at (19.2,7.3) {$\mathcal{T}_3$};

    \coordinate (O) at (12,2);
    \coordinate (A) at (9.0,1.0);
    \coordinate (B) at (15,1.0);
    \coordinate (C) at (12,5);
    \filldraw[tri] (A)--(B)--(C)--cycle;
    \draw[dashed,gray!70,line width=0.8pt,->] (O)--(8.1,0.7);
    \draw[dashed,gray!70,line width=0.8pt,->] (O)--(16.2,0.6);
    \draw[dashed,gray!70,line width=0.8pt,->] (O)--(12,7);
    \node[pt,label={[below]$A(15,0,0)$}] at (A) {};
    \node[pt,label={[below]$B(0,10,0)$}] at (B) {};
    \node[pt,label={[above]$C(6,0,0)$}] at (C) {};

    \coordinate (B4) at (-4,-8);
    \coordinate (C4) at (-4,-6);
    \coordinate (A4) at (10,-3);
    \filldraw[tri] (B4)--(C4)--(A4)--cycle;
    \node[pt,label={[below left]$B_4(0,0)$}] at (B4) {};
    \node[pt,label={[above left]$C_4(0,2)$}] at (C4) {};
    \node[pt,label={[above]$A_4(15,5)$}] at (A4) {};
    \node at (-0.7,-6) {$\mathcal{T}_4$};

    \coordinate (C5) at (9,-9);
    \coordinate (A5) at (9,-6);
    \coordinate (B5) at (17,-1);
    \filldraw[tri] (C5)--(A5)--(B5)--cycle;
    \node[pt,label={[below left]$C_5(0,0)$}] at (C5) {};
    \node[pt,label={[left]$A_5(0,3)$}] at (A5) {};
    \node[pt,label={[above]$B_5(10,8)$}] at (B5) {};
    \node at (11.2,-5.8) {$\mathcal{T}_5$};

    \coordinate (C6) at (13.4,-9);
    \coordinate (B6) at (13.4,-7);
    \coordinate (A6) at (27,-1);
    \filldraw[tri] (C6)--(B6)--(A6)--cycle;
    \node at (12,-7.5) {$B_6(0,2)$};
    \node[pt,label={[below left]$C_6(0,0)$}] at (C6) {};
    \node[pt,label={}] at (B6) {};
    \node[pt,label={[above]$A_6(15,12)$}] at (A6) {};
    \node at (18.3,-5.5) {$\mathcal{T}_6$};
  \end{tikzpicture}%
  }
  \caption{Six planar presentations $\mathcal{T}_1,\dots,\mathcal{T}_6$ of the moment triangle for $\CP(2,3,5)$.}
  \label{fig:moment-polygons-235}
\end{figure}
\end{example}

Thus, starting from any presentation $\mathcal{T}_i$ of the moment polygon of $\CP(a,b,c)$, we can truncate the three non-Delzant corners using the previous procedure. In this way, the non-Delzant triangle $\mathcal{T}_i$ is converted into a Delzant polygon $\mathcal{P}$, which is the moment polygon of a smooth symplectic toric $4$-manifold $(X,\omega)$. The symplectic form $\omega$ depends on the truncation sizes, whereas the diffeomorphism type of $X$ is uniquely determined by the triple $(a,b,c)$. Indeed, resolving the three singularities produces three Hirzebruch--Jung strings $\st_a,\st_b,\st_c$, determined by
\[
\frac{a}{a_b}=[a_1,\dots,a_{k_a}],\quad
\frac{b}{b_c}=[b_1,\dots,b_{k_b}],\quad
\frac{c}{c_a}=[c_1,\dots,c_{k_c}],
\]
where $k_a,k_b,k_c$ are exactly the numbers of new edges created near the three corners. In addition, $\mathcal{P}$ has three edges inherited from the original triangle; their moment map preimages are spheres $N_a,N_b,N_c$, each disjoint from $\st_a,\st_b,\st_c$, respectively. Setting
\(
n:=k_a+k_b+k_c,
\)
we see that $\mathcal{P}$ has $n+3$ edges, and hence
\(
X\cong \CP^2\#n\overline{\CP}^2.
\)
We call any such pair $(X,\omega,D:=\st_a\cup\st_b\cup\st_c)$ a \emph{symplectic minimal resolution} of $\CP(a,b,c)$. In the terminology of Definition \ref{def:3assumptions}, $D$ may also be called a sub-toric configuration.

\begin{figure}[htbp]
  \centering
  \begin{tikzpicture}[x=1cm,y=1cm, line cap=round, line join=round]
    \tikzset{
      boundary/.style={draw=black!70, line width=0.9pt},
      trunc/.style={draw=black!55, dashed, line width=0.8pt},
      fillpoly/.style={fill=black!20, draw=none},
      pt/.style={circle, fill=black, inner sep=1.8pt}
    }

    \coordinate (A) at (0,0);
    \coordinate (B) at (0,5);
    \coordinate (C) at (6,3);

    \coordinate (L1) at (0,1.0);
    \coordinate (L2) at (0,4.5);
    \coordinate (M1) at (2.,1.0);
    \coordinate (M2) at (1.5,4.5);
    \coordinate (R0) at (4,2);
    \coordinate (R1) at (4.5,2.5);
    \coordinate (R2) at (4.5,3);
    \coordinate (R3) at (4,3.5);
    \coordinate (R4) at (3,4);
    \coordinate (S1) at (5.5,2.75);
    \coordinate (S2) at (5,2.5);
    \coordinate (S3) at (4.5,2.25);

    \fill[fillpoly] (L1) -- (M1) -- (R0) -- (R1) -- (R2) -- (R3) -- (R4) -- (M2) -- (L2) -- cycle;

    \draw[boundary] (A) -- (B) -- (C) -- cycle;
    \draw[trunc] (L1) -- (L2);
    \draw[trunc] (L1) -- (M1);
    \draw[trunc] (L2) -- (M2);
    \draw[trunc] (R4) -- (S1);
    \draw[trunc] (R1) -- (R0);
    \draw[trunc] (S2) -- (R2);
    \draw[trunc] (R2) -- (R3);
    \draw[trunc] (R2) -- (S3);
    \draw[trunc] (R3) -- (R4);

    \node[pt,label={[below left]$A_1(0,0)$}] at (A) {};
    \node[pt,label={[above left]$B_1(0,5)$}] at (B) {};
    \node[pt,label={[right]$C_1(6,3)$}] at (C) {};

    \node at (-0.5,2.7) {$N_c$};
    \node at (2.7,4.4) {$N_a$};
    \node at (3.5,1.25) {$N_b$};
    \node at (2.0,2.75) {$\mathcal{P}$};

    \node at (0.8,4.2) {$-3$};
    \node at (1.0,1.2) {$-2$};
    \node at (3.4,3.5) {$-2$};
    \node at (4,3.1) {$-2$};
    \node at (4.2,2.7) {$-2$};
    \node at (3.96,2.34) {$-2$};
  \end{tikzpicture}
  \caption{Moment polygon of the minimal resolution $\CP^2\#6\overline{\CP}^2$ from the planar presentation $\mathcal{T}_1$ of $\CP(2,3,5)$, with the three inherited edges labeled by $N_a$, $N_b$ and $N_c$, together with the truncations producing three Hirzebruch--Jung strings with self-intersection sequences $(-2),(-3),(-2,-2,-2,-2)$.}
  \label{fig:lemma-2-8-diagram}
\end{figure}

\begin{remark}\label{rmk:chen}
 In \cite[Theorem 2.6]{Chen09} (see also \cite{Mcduff09}), Chen proved that any two cohomologous symplectic forms on the weighted projective plane $\CP(a,b,c)$ are diffeomorphic. The argument uses equivariant pseudoholomorphic curve techniques and extends the Gromov--Taubes' uniqueness theorem for symplectic structures on $\CP^2$. Consequently, when discussing the symplectic minimal resolution of $\CP(a,b,c)$, we may identify it with the resolution of a symplectic reduction model, without having to specify a particular symplectic form on the orbifold.
\end{remark}

\begin{lemma}\label{lem:threeedges}
  If $a<b<c$, then $[N_a]^2=[N_b]^2=-1$ and $[N_c]^2\geq -1$.
\end{lemma}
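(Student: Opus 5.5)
The plan is to compute $[N_a]^2,[N_b]^2,[N_c]^2$ as the self-intersection of the orbifold coordinate line minus local corrections from the two singular points it passes through. Each $N$ is the proper transform of a coordinate line $L_a=\{z_0=0\}$, $L_b=\{z_1=0\}$, $L_c=\{z_2=0\}$ of $\CP(a,b,c)$. Using the rescaled presentation $\tau=abc$, the rational self-intersections are
\[
L_a^2=\tfrac{a}{bc},\qquad L_b^2=\tfrac{b}{ac},\qquad L_c^2=\tfrac{c}{ab}.
\]
These follow, for instance, from the lattice lengths of the edges of $\mathcal{T}_1$ together with the orbifold Duistermaat--Heckman normalization, or from $\mathcal{O}(1)^2=1/(abc)$ and $L_a\sim\mathcal{O}(a)$.

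Under the minimal resolution, $L_a$ passes through the two singular points $[0{:}1{:}0]$ and $[0{:}0{:}1]$, of orders $b$ and $c$. The pullback of $L_a$ equals $N_a$ plus a rational combination of the components of $\st_b$ and $\st_c$. The coefficient on the endpoint sphere of each string is $q/r$, where $r$ is the order of the singularity and $q$ is the appropriate entry of the continued-fraction data ($b_c$ or $b_a$, respectively $c_a$ or $c_b$, with the orientation determined by which endpoint $N_a$ meets). Dotting the pullback with $N_a$ gives
\[
[N_a]^2=\tfrac{a}{bc}-\tfrac{\beta}{b}-\tfrac{\gamma}{c},\qquad 0<\beta<b,\ 0<\gamma<c,
\]
and similarly for $N_b$ and $N_c$. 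If one of $a,b,c$ equals $1$, the corresponding correction is simply $0$.

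To derive the correction I would work in the planar presentation $\mathcal{T}_1$ near each corner, using the corner-chopping description. At a $\frac{1}{r}(1,q)$ corner, the inherited edge meets the endpoint $S_1$ of the HJ string. Computing the new primitive edge vectors shows that the edge self-intersection drops by exactly $q/r$, where $q$ is the entry fixed by the side of the corner; its range is $0<q<r$. Alternatively, and as a cross-check, I would apply the Delzant formula $u_{i-1}+u_{i+1}=-[N_i]^2u_i$ to the consecutive primitive normals at the two ends of each inherited edge.

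With this formula in hand, the inequalities are elementary. Integrality together with $-\beta/b-\gamma/c>-2$ already forces every $[N]^2\ge -1$, which proves the claim for $N_c$. For $N_a$, the corrections are at least $1/b$ and $1/c$, so
\[
[N_a]^2\le \tfrac{a}{bc}-\tfrac1b-\tfrac1c=\tfrac{a-b-c}{bc}<0,
\]
hence $[N_a]^2=-1$. For $N_b$ the same bound gives $\frac{b-a-c}{ac}<0$, since $b<c$.

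I expect the main obstacle to be the degenerate case $a=1$, where the point $[1{:}0{:}0]$ is smooth and contributes no correction to $N_b$ or $N_c$. Then the crude bound only yields $[N_b]^2\le \frac{b-\gamma}{c}$, which is not obviously negative. There I would identify $\gamma$ exactly: integrality forces $\gamma\equiv b\cdot(\text{unit})\pmod c$, and the precise value is given by the congruences $c_a a\equiv b$ and $c_b b\equiv a\pmod c$. I would then check $\gamma>b$ directly, or else treat $\CP(1,b,c)$ separately via its explicit toric polygon. The step demanding the most care is getting the correct inverse ($q$ versus $q'$) at each corner.
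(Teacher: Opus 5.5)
Your argument is correct for the case the lemma actually covers, which is $a\ge 2$, and it takes a different route from the paper.

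\textbf{Comparison with the paper.} The paper stays local in the moment polygon. For each inherited edge it picks a convenient planar presentation: $\mathcal{T}_4$ for $N_a$, $\mathcal{T}_2$ for $N_b$, $\mathcal{T}_1$ for $N_c$. It chops the corner at one end so that a horizontal edge is attached to the vertical one. It then shears the other corner by $G$ or $G_t$ into a standard $\mathcal{O}_{r,q}$ model; this is where inequalities such as $0<bc-b_cc+a<bc$ are needed, and these use $a<c$. Finally it reads $[N]^2$ off the direction $(1,1)$ or $(1,t)$ of the neighbouring edge.

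You instead use the projection formula $[N_a]^2=L_a^2-\beta/b-\gamma/c$, with $L_a^2=a/(bc)$ and both corrections in $(0,1)$, and let integrality finish. Your route needs two global inputs: the rational self-intersection of the coordinate lines and the local pullback coefficient $q/r$. After that, the inequalities take one line and you never identify the exact corner types. It also gives slightly more: when all weights are at least $2$, $[N_x]^2=-1$ whenever $x$ is smaller than the sum of the other two weights. The ``$q$ versus $q'$'' bookkeeping you flag as delicate does not matter, since you only use $1\le q\le r-1$.

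\textbf{The $a=1$ case.} Your last paragraph chases a case that is outside the lemma and where the statement is false. Section~\ref{section:WP} assumes $\CP(a,b,c)$ has three isolated singular points, with $0<a_b,a_c<a$, so $a\ge 2$; in every application the three strings are also nonempty. For $a=1$ your proposed check $\gamma>b$ cannot succeed. Near $[0{:}0{:}1]$ the line $\{z_1=0\}$ is the $u$-axis of a $\frac1c(1,b)$ singularity, so $\gamma=b$ exactly and $[N_b]^2=\frac bc-\frac bc=0$. For example, for $\CP(1,2,3)$ the resolved fan has rays $(1,0),(0,1),(-1,-1),(-2,-3),(-1,-2),(0,-1)$, and $N_a,N_b,N_c$ have self-intersections $-1,0,1$. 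State $a\ge 2$ as a hypothesis and drop that paragraph.
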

\begin{proof}
  To compute $[N_a]^2$, it is convenient to choose either $\mathcal{T}_4$ or $\mathcal{T}_6$ as the model for truncation. Let us take $\mathcal{T}_4$ as an example. First, by truncating the $B_4$-corner which can be locally identify with the $\mathcal{O}_{bc,b_cc}$-model, we obtain a horizontal edge adjacent to the vertical edge of the initial triangle. To deal with the $C_4$-corner, apply the $\text{GL}(2;\mathbb{Z})$-transformation by 
  \[G:=\begin{pmatrix}
    1 & 0 \\
    1 & -1
  \end{pmatrix}\]
  so that $A_4,B_4,C_4$ are respectively mapped to 
  \[A_4'=(bc,bc-b_cc),\qquad B_4'=(0,0),\qquad C_4'=(0,-a).\]
  By our assumption on $a<b<c$ and the choice of $b_c$, it is straightforward to see that \[0<bc-b_cc+a<bc.\]
  This enables us to locally identify the $C_4'$-corner with the $\mathcal{O}_{bc,bc-b_cc+a}$-model so that we can apply the principle of corner chopping described above to obtain a horizontal edge adjacent to the vertical edge. Returning to the original triangle $\Delta_{A_4B_4C_4}$, this vertical edge is mapped by the inverse $\text{GL}(2,\mathbb{Z})$-transformation to an edge in the direction $(1,1)$. Standard fact in toric geometry tells us that the sphere $N_a$ lying over the vertical edge has self-intersection $-1$.

  The computations of $[N_b]^2$ and $[N_c]^2$ are analogous. For $N_b$, we may work with either $\mathcal{T}_2$ or $\mathcal{T}_5$; take $\mathcal{T}_2$ and truncate its $A_2$-corner so that a horizontal edge is attached to the vertical edge. Since
  \[0<ac-a_cc+b<ac,\]
  we can again use the matrix $G$ to identify the $C_2$-corner with the $\mathcal{O}_{ac,ac-a_cc+b}$-model. After corner chopping, the edge adjacent to the vertical edge of $\Delta_{A_2B_2C_2}$ has direction $(1,1)$. Therefore $[N_b]^2=-1$. For $N_c$, we may work with either $\mathcal{T}_1$ or $\mathcal{T}_3$; take $\mathcal{T}_1$ and chop its $A_1$-corner to obtain one horizontal edge attached to the vertical edge. Consider the unique integer $t$ such that 
  \[0<tab-a_bb+c<ab.\] It is clear that $t\leq 1$. Then we apply the $\text{GL}(2;\mathbb{Z})$-transformation by the matrix
  \[G_t:=\begin{pmatrix}
    1 & 0 \\
    t & -1
  \end{pmatrix}\]
  to identify the $B_1$-corner with the $\mathcal{O}_{ab,tab-a_bb+c}$-model. Since $G_t^{-1}$ sends the horizontal direction $(1,0)^{\text{T}}$ to $(1,t)^{\text{T}}$ with $t\leq 1$, we see that $[N_c]^2=-t\geq -1$.
\end{proof}

\begin{corollary}\label{cor:3nminus6}
  \[\sum_{\alpha=1}^{k_a} a_\alpha+\sum_{\beta=1}^{k_b} b_{\beta}+\sum_{\gamma=1}^{k_c} c_{\gamma}\geq 3n-6.\]
\end{corollary}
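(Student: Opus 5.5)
The plan is to use the standard self-intersection identity for smooth toric surfaces. If $(X,\omega)$ is a smooth toric $4$-manifold whose moment polygon $\mathcal{P}$ has $N$ edges, then the toric boundary divisor is a cycle of $N$ spheres $D_1,\dots,D_N$, and
\[
\sum_{i=1}^N [D_i]^2 = 12-3N.
\]
This can be checked either via Noether's formula or directly. For the direct route, note that $-K_\omega=\sum [D_i]$, so $K_\omega^2=\sum [D_i]^2+2N$. Since $X\cong \CP^2\#(N-3)\overline{\CP}^2$, we also have $K_\omega^2=9-(N-3)=12-N$, and the identity follows.

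Now apply this to the polygon $\mathcal{P}$ produced from the presentation $\mathcal{T}_i$. It has $N=n+3$ edges. These split into the $n$ edges belonging to the three HJ strings $\st_a,\st_b,\st_c$, whose self-intersections are $-a_\alpha,-b_\beta,-c_\gamma$, and the three inherited edges $N_a,N_b,N_c$. The identity therefore reads
\[
-\sum_{\alpha} a_\alpha-\sum_\beta b_\beta-\sum_\gamma c_\gamma+[N_a]^2+[N_b]^2+[N_c]^2=12-3(n+3)=3-3n.
\]

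Rearranging, we get
\[
\sum a_\alpha+\sum b_\beta+\sum c_\gamma=3n-3+[N_a]^2+[N_b]^2+[N_c]^2 .
\]
We may assume $a<b<c$ after relabeling; the statement is symmetric up to the choice of orientations, and the sums do not depend on orientation. Lemma \ref{lem:threeedges} then gives $[N_a]^2+[N_b]^2+[N_c]^2\geq -3$. This yields the bound $\geq 3n-6$.

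The only point requiring care is the identification $X\cong\CP^2\#n\overline{\CP}^2$, so that $K_\omega^2=9-n$. The paper already states this identification: $\mathcal{P}$ has $n+3$ edges, which gives $b_2(X)=n+1$, and $X$ is rational as a smooth toric surface. Once that is in hand, the rest is the arithmetic above. If one prefers to avoid $K^2$, an alternative is to sum the standard relation $v_{i-1}+v_{i+1}=-[D_i]^2\,v_i$ over the primitive edge vectors $v_i$ and use the rotation-number identity $\sum(3+[D_i]^2)=12$ for unimodular fans; both routes give the same formula.
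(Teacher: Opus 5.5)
Your proposal is correct and follows the paper's proof: expand $K_\omega^2=9-n$ using $-K_\omega=[D]+[N_a]+[N_b]+[N_c]$ over the $n+3$ boundary spheres, then apply Lemma \ref{lem:threeedges} to bound $[N_a]^2+[N_b]^2+[N_c]^2\geq -3$. The only difference is that you first package the expansion as the identity $\sum[D_i]^2=12-3N$, which is just a rearrangement of the same computation.
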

\begin{proof}
  Since $-K_{\omega}$ is represented by $D\cup N_a\cup N_b\cup N_c$, by Lemma \ref{lem:threeedges} we have
   \begin{align*}
  9-n&=K_\omega^2=\big([D]+[N_a]+[N_b]+[N_c]\big)^2\\
  &=-\big(\sum_{\alpha=1}^{k_a} a_\alpha+\sum_{\beta=1}^{k_b} b_{\beta}+\sum_{\gamma=1}^{k_c} c_{\gamma}\big)+[N_a]^2+[N_b]^2+[N_c]^2+2(n+3)\\
  &\geq -\big(\sum_{\alpha=1}^{k_a} a_\alpha+\sum_{\beta=1}^{k_b} b_{\beta}+\sum_{\gamma=1}^{k_c} c_{\gamma}\big)-3+2(n+3).
  \end{align*}
\end{proof}

Now, we can prove one direction in our main Theorem \ref{thm:main}.
\begin{proposition}\label{prop:onlyif}
  Let $(X,\omega,D=\st_a\cup\st_b\cup\st_c)$ be a pair coming from the symplectic minimal resolution of $\CP(a,b,c)$ for pairwise coprime $(a,b,c)$ with $a<b<c$. Then 
\[\mathcal{E}^{\text{log}}_{\omega}(\st_b,\st_c;D)=\{[N_a]\},\quad \mathcal{E}^{\text{log}}_{\omega}(\st_a,\st_c;D)=\{[N_b]\},\quad \mathcal{E}^{\text{log}}_{\omega}(\st_a,\st_b;D)=\begin{cases}\{[N_c]\},\text{if } [N_c]^2=-1,\\
\varnothing ,\text{if } [N_c]^2\geq 0.\end{cases}\]
In particular, $D$ is full, of $(a,b,c)$-type and gap-admissible.
\end{proposition}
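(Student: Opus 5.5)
The plan is to compute each set $\mathcal{E}^{\text{log}}_\omega(\st_i,\st_j;D)$ directly from the anticanonical cycle. On the toric side, $D\cup N_a\cup N_b\cup N_c$ is the full toric boundary, so
\[
-K_\omega=[D]+[N_a]+[N_b]+[N_c],
\]
as already used in Corollary \ref{cor:3nminus6}. Every $E\in\mathcal{E}_\omega$ satisfies $K_\omega\cdot E=-1$, hence
\[
1=E\cdot[\st_a]+E\cdot[\st_b]+E\cdot[\st_c]+E\cdot[N_a]+E\cdot[N_b]+E\cdot[N_c].
\]
Suppose $E$ lies in $\mathcal{E}^{\text{log}}_\omega(\st_i,\st_j;D)$. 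Then all three terms $E\cdot[\st_\alpha]$ are $\geq 0$, and two of them are $\geq 1$. So the first three terms sum to at least $2$, which forces
\[
E\cdot[N_a]+E\cdot[N_b]+E\cdot[N_c]\leq -1 .
\]

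The key step is to show that $E\cdot[N_x]\geq 0$ unless $E=[N_x]$ and $[N_x]^2=-1$. We split by the self-intersection of $N_x$.
\begin{itemize}
\item If $[N_x]^2=-1$, then $[N_x]$ and $E$ are both symplectic exceptional classes. Two distinct such classes pair non-negatively, by the standard positivity argument with a generic $J$ making both embedded $J$-holomorphic.
\item If $[N_x]^2\geq 0$ (possible only for $N_c$, by Lemma \ref{lem:threeedges}), choose $J$ making $N_c$ $J$-holomorphic and represent $E$ by a $J$-holomorphic subvariety $\sum m_i C_i$, which exists because $SW(E)\neq 0$. Positivity of intersections together with $m\,[N_c]^2\geq 0$ for any copy of $N_c$ among the components gives $E\cdot[N_c]\geq 0$.
\end{itemize}
Consequently $E=[N_x]$ for some $x$ with $[N_x]^2=-1$. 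I expect this positivity step to be the main point needing care. In particular one must justify that distinct exceptional classes pair non-negatively in this (non-generic, toric) setting, and I would handle this by passing to generic tame $J$ as in the standard argument.

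It remains to identify which $[N_x]$ lies in which set. In the polygon $\mathcal{P}$, each inherited edge $N_x$ is adjacent exactly to one endpoint of each of the other two strings and is disjoint from the string it replaced. The three inherited edges are also pairwise disjoint, since truncation separates them. Hence:
\begin{itemize}
\item $[N_a]\cdot[\st_b]=[N_a]\cdot[\st_c]=1$ and $[N_a]\cdot[\st_a]=0$;
\item similarly for $N_b$ and $N_c$.
\end{itemize}
This shows $[N_a]\in\mathcal{E}^{\text{log}}_\omega(\st_b,\st_c;D)$ but $[N_a]\notin\mathcal{E}^{\text{log}}_\omega(\st_a,\cdot\,;D)$. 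Combined with Lemma \ref{lem:threeedges}, this yields the three displayed equalities, including the dichotomy for $N_c$.

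Finally we verify the three properties.
\begin{itemize}
\item \emph{Full:} $X\cong\CP^2\#n\overline{\CP}^2$ with $n=k_a+k_b+k_c$, so $b_2^-(X)=n$.
\item \emph{$(a,b,c)$-type:} this holds by construction of the strings through the continued fractions $a/a_b$, $b/b_c$, $c/c_a$.
\item \emph{Gap-admissible:} we have $[\omega]\cdot(K_\omega+[D])=-\omega(N_a)-\omega(N_b)-\omega(N_c)$. For $i=a$, the required inequality is
\[
-\omega(N_a)-\omega(N_b)-\omega(N_c)<-\e(\st_a,\st_b;D)-\e(\st_a,\st_c;D).
\]
By the computation above, the right-hand side equals $-\omega(N_b)$ minus either $\omega(N_c)$ or $0$, while $\omega(N_x)>0$. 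So the inequality reduces to $\omega(N_a)>0$, or to $\omega(N_a)+\omega(N_c)>0$. The cases $i=b$ and $i=c$ are identical.
\end{itemize}
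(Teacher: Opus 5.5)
Your proposal is correct and follows the paper's proof: the same adjunction count $1=E\cdot\bigl([\st_a]+[\st_b]+[\st_c]+[N_a]+[N_b]+[N_c]\bigr)$ against the toric anticanonical cycle, combined with non-negativity of $E\cdot[N_x]$. You prove that non-negativity directly, via generic $J$ for $(-1)$-classes and positivity against a $J$-holomorphic subvariety when $[N_c]^2\geq 0$, where the paper instead cites Pinsonnault's Lemma 2.1 and Zhang's Lemma 2.2. Your gap-admissibility check is the paper's, made slightly more explicit about why the inequality is strict.
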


\begin{proof}
On the one hand, Lemma \ref{lem:threeedges} immediately gives
\[
  [N_a]\in \mathcal{E}^{\text{log}}_{\omega}(\st_b,\st_c;D),\quad [N_b]\in \mathcal{E}^{\text{log}}_{\omega}(\st_a,\st_c;D),\quad [N_c]\in\mathcal{E}^{\text{log}}_{\omega}(\st_a,\st_b;D)\text{ when } [N_c]^2=-1.
\]
On the other hand, for any
$E\in\mathcal{E}_\omega\setminus \{[N_a],[N_b],[N_c]\}$,
the adjunction formula implies
\[-1=-2-E^2=E\cdot K_\omega=-E\cdot ([\st_a]+[\st_b]+[\st_c]+[N_a]+[N_b]+[N_c]).\]  
By \cite[Lemma 2.1]{Pinsonnault}, we have $E\cdot [N_a]\geq 0$, $E\cdot[N_b]\geq 0$, and if $[N_c]^2=-1$, $E\cdot [N_c]\geq 0$.
When $[N_c]^2\geq 0$, we still have $E\cdot [N_c]\geq 0$ by \cite[Lemma 2.2]{Zhangcurvecone}, since $E$ has nontrivial SW invariant.
By definition, if $E\in \mathcal{E}^{\text{log}}_\omega(\st_i,\st_j;D)$ for some $i\neq j\in\{a,b,c\}$, then
\[
  E\cdot([\st_a]+[\st_b]+[\st_c])\geq 1+1+0= 2.
\]
This contradicts the adjunction equality above. Therefore, $[N_a],[N_b],[N_c]$ are the only possible classes contributing to the exceptional gaps.

$D$ is clearly full and of $(a,b,c)$-type by assumptions. To see it is also gap-admissible, note that 
\[[\omega]\cdot(K_\omega+[D])=-[\omega]\cdot ([N_a]+[N_b]+[N_c])\leq -\e(\st_b,\st_c;D)-\e(\st_a,\st_c;D)-\e(\st_a,\st_b;D).\]
\end{proof}

\begin{proof}[Proof of `only if' part of Theorem \ref{thm:main}]
  By Proposition \ref{prop:onlyif}, it is enough to prove that, under the assumptions that $D$ is sub-toric, full, and of $(a,b,c)$-type, the divisor $D$ is obtained from the symplectic minimal resolution. Suppose $D=\st_1\cup\st_2\cup\st_3$ is sub-toric. Since the toric boundary $\mu^{-1}(\mathcal{P})$ is a cycle of spheres, the complement $\mu^{-1}(\mathcal{P})\setminus D$ consists of exactly three strings, which we denote by $N_1,N_2,N_3$. Because $D$ is full, each $N_i$ must in fact be a single sphere. We may label these components so that
  \(
    [N_i]\cdot [\st_i]=0, [N_i]\cdot [\st_{i\pm 1}]=1 \text{ for }i\in\{1,2,3\}
  \), where $i\pm 1$ is understood mod $3$.
  The $(a,b,c)$-type assumption therefore implies that, for each $i$, a neighborhood of $\mu(\st_i)$ in $\mathcal{P}$ is identified with the truncation of the corresponding orbifold corner of the moment triangle of $\CP(a,b,c)$. Indeed, in a Delzant polygon, a neighborhood of a chain of edges is determined by the associated self-intersection sequence, up to rescaling the edge lengths. Hence, $D$ is precisely the divisor arising from the symplectic minimal resolution.
\end{proof}

If a string only contains $(-2)$-spheres, we will call it a \emph{$(-2)$-string}. The following lemma concering the case of two $(-2)$-strings will be needed later.

\begin{lemma}\label{lem:-2chains}
Let $a,b,c>1$ be pairwise coprime integers where $a>b$. Let $\st_a,\st_b,\st_c$ be three HJ strings coming from the minimal resolution of $\mathbb{CP}(a,b,c)$ at $a,b,c$-point respectively. Then $\st_a,\st_b$ are $(-2)$-strings if and only if $c=kab-a-b$ for some integer $k\ge 1$.
Moreover, in this situation, the third string $\st_c$ has self-intersection sequence
\begin{itemize}[nosep]
    \item $(-a,-k,-b)$ if $k\ge 2$; 
    \item $(1-a,1-b)$ if $k=1$ and $b\neq 2$;
    \item $(2-a)$ if $k=1$ and $b=2$.
\end{itemize}

\end{lemma}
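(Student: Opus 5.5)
The plan is to translate everything into congruence conditions on the singularity types and then compute one continued fraction explicitly. First, a HJ string of length $n$ consisting only of $(-2)$-spheres has $[2,\dots,2]=\frac{n+1}{n}$. So a string resolving a $\frac{1}{r}(1,q)$-singularity is a $(-2)$-string if and only if $q=r-1$, i.e.\ the singularity is of type $\frac{1}{r}(1,-1)$. This condition is symmetric under reversing orientation, since $(r-1)^2\equiv 1\pmod r$.

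\emph{Step 1: reduce the $(-2)$-string condition to divisibility.} By the description of the minimal resolution in this section, $\st_a$ resolves $\frac{1}{a}(b,c)$. Hence $\st_a$ is a $(-2)$-string if and only if $a_b=a-1$. Using $a_b b\equiv c\pmod a$, this reads $-b\equiv c\pmod a$, i.e.\ $a\mid b+c$. Likewise $\st_b$ is a $(-2)$-string if and only if $b\mid a+c$.

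\emph{Step 2: solve the divisibility conditions.} If both conditions hold, then $a\mid a+b+c$ and $b\mid a+b+c$. Since $\gcd(a,b)=1$, this gives $ab\mid a+b+c$, so $c=kab-a-b$ with $k\ge 1$, using $c>0$. Conversely, if $c=kab-a-b$, then $b+c=a(kb-1)$ and $a+c=b(ka-1)$, so both divisibilities hold.

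\emph{Step 3: compute $\st_c$.} I would determine $c_a$ from $c_a a\equiv b\pmod c$. The identity $a(kb-1)=c+b$ shows $c_a=kb-1$, provided $0<kb-1<c$. Checking this inequality uses $a>b\ge 2$, hence $a\ge 3$. It is the only place where the size hypotheses enter, and it needs a short case check when $k=1$ and $b$ is small. Then, from $c=a(kb-1)-b$,
\[\frac{c}{kb-1}=a-\frac{b}{kb-1}=a-\cfrac{1}{k-\frac1b}.\]

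For $k\ge2$ all entries are at least $2$, so this is the HJ expansion $[a,k,b]$, giving the sequence $(-a,-k,-b)$.

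For $k=1$ the middle entry would be $1$, so instead I rewrite
\[\frac{c}{b-1}=a-\frac{b}{b-1}=(a-1)-\frac{1}{b-1}.\]
This equals $[a-1,b-1]$ when $b\ge3$, and equals $a-2$ when $b=2$, since then $b-1=1$ and $c=a-2$. This gives $(1-a,1-b)$ and $(2-a)$ respectively.

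Orientation is irrelevant here, because only the unordered self-intersection sequence is claimed. The main obstacle is just the bookkeeping that the continued fraction entries are at least $2$ and that $c_a$ lies in the admissible range, particularly for the degenerate case $k=1$. Everything else is elementary number theory.
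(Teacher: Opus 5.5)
Your proposal is correct and follows essentially the same route as the paper: reduce the $(-2)$-string conditions to $c\equiv -b \pmod a$ and $c\equiv -a \pmod b$, then combine them via $\gcd(a,b)=1$ to get $c=kab-a-b$, and read off $\st_c$ from the identity $a(kb-1)=b+c$ together with the expansions $[a,k,b]$, $[a-1,b-1]$ and $(a-2)$. Your extra check that $0<kb-1<c$ is a detail the paper leaves implicit. It fails only for $(a,b,k)=(3,2,1)$, which is excluded because it gives $c=1$.
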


\begin{proof}
The cyclic quotient singularity at the $a$-point has type
\[
\frac{1}{a}(b,c)\cong \frac{1}{a}(1,q), \qquad q\equiv b^{-1}c \pmod a.
\]
Hence, $\st_a$ is a $(-2)$-string if and only if $q\equiv -1 \pmod a$, i.e., $c\equiv -b \pmod a$.
Similarly, $\st_b$ is a $(-2)$-string if and only if
$c\equiv -a \pmod b$.
Since $\gcd(a,b)=1$, Chinese remainder theorem implies that these two congruences are equivalent to
\(
c\equiv -(a+b)\pmod{ab},
\)
and
\(
c=kab-a-b
\)
for some integer $k\ge 1$. This proves the equivalence.

Assume now $c=kab-a-b$. Then \(
a(kb-1)\equiv b \pmod c,
\)
since
\[
a(kb-1)=akb-a=b+(kab-a-b)=b+c.
\]
So, the cyclic quotient singularity at the $c$-point has type
\[
\frac{1}{c}(a,b)\cong \frac{1}{c}(1,kb-1).
\]
A direct computation gives
\[
[a,k,b]
=
a-\frac{1}{k-\frac{1}{b}}
=
\frac{kab-a-b}{kb-1}
=
\frac{c}{kb-1},
\]
so the third string $\st_c$ has self-intersection sequence $(-a,-k,-b)$ when $k\geq 2$. If $k=1$ and $b\neq 2$, in order for the items in the continued fraction to be at least $2$, one should instead write
\[[a-1,b-1]=(a-1)-\frac{1}{b-1}=\frac{ab-a-b}{b-1}=\frac{c}{b-1}.\]
Hence, $\st_c$ has self-intersection sequence $(1-a,1-b)$ in this case. Similarly, when $k=1$ and $b=2$, the self-intersection sequence should be further reduced to $(2-a)$.
\end{proof}

\begin{remark}
  In particular, the above lemma implies that there cannot exist three full $(-2)$-strings of $(a,b,c)$-type for pairwise coprime positive integers $(a,b,c)$. Weimin Chen (\cite{Chentalk}) has forthcoming work that also studies the maximal possible number of \emph{disjoint} symplectic $(-2)$-spheres in rational manifolds.
\end{remark}

\section{Symplectic Hirzebruch--Jung strings}\label{Section:HJstrings}

\subsection{Exceptional spheres and HJ strings}\label{section:HJnontoric}
In this section, we let $D\subseteq (X,\omega)$ be a symplectic divisor in a symplectic rational manifold, given as the disjoint union of three HJ strings $\st_1,\st_2,\st_3$ that satisfy the \emph{full} condition
\[
  l(\st_1)+l(\st_2)+l(\st_3)=b_2^-(X).
\]
Our goal is to analyze the intersection pattern between exceptional spheres and $D$. We begin with the following basic observation for exterior exceptional spheres.

\begin{lemma}\label{lem:noexterior}
For $D=\st_1\cup\st_2\cup\st_3\subseteq (X,\omega)$ which is full, there is no exterior exceptional sphere.
\end{lemma}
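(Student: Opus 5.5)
We argue by contradiction using a lattice-rank count, in the spirit of Lemma \ref{lem:fibercomplement}. Suppose $C$ is an exterior exceptional sphere, i.e.\ an embedded symplectic $(-1)$-sphere disjoint from $D=\st_1\cup\st_2\cup\st_3$. Then $E:=[C]$ satisfies $E\cdot[S]=0$ for every component $S$ of $D$.

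Let $L\subseteq H_2(X;\mathbb{Z})$ be the sublattice spanned by the classes of all components of $D$. First we check that $L$ is negative definite of rank $b_2^-(X)$. The three strings are pairwise disjoint, so the intersection matrix of $D$ is block diagonal, with blocks $-M(\st_i)$. Each $\st_i$ is an HJ string, so $M(\st_i)$ is positive definite, as shown in Subsection \ref{section:resfiberclass} via the increasing sequence $\Delta_1<\cdots<\Delta_{n+1}$ and Sylvester's criterion. Hence the intersection form on $L$ is negative definite and nondegenerate. In particular the component classes are linearly independent, and by the full condition $\operatorname{rank}L=l(\st_1)+l(\st_2)+l(\st_3)=b_2^-(X)$.

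Now add $E$. It is orthogonal to $L$ and has $E^2=-1$, so $L\oplus\langle E\rangle$ has intersection matrix $\operatorname{diag}(-M(\st_1),-M(\st_2),-M(\st_3),-1)$. This matrix is negative definite of rank $b_2^-(X)+1$. But a negative definite subspace of $H_2(X;\mathbb{R})$ has dimension at most $b_2^-(X)$, which is a contradiction. So no exterior exceptional sphere exists.

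One caveat concerns the convention for an exterior exceptional sphere: if "exterior" is meant relative to a pair $(D,\tilde D)$ with $\tilde D$ the total transform, the sphere meets no component of $D$ in either reading, so the argument is unchanged. There is no real obstacle here; the only point to verify carefully is the negative definiteness of each HJ string, which was established earlier.
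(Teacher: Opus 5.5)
Your proposal is correct and follows the paper's own one-line argument: an exterior exceptional sphere together with the three HJ strings, each negative definite, would give a negative definite configuration of $b_2^-(X)+1$ linearly independent classes, which is impossible. One small correction to your caveat: in the total-transform reading the sphere would itself be an isolated $(-1)$-component of $D$ rather than a sphere meeting no component of $D$, and this case is excluded directly because HJ components have self-intersection at most $-2$.
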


\begin{proof}
  Otherwise, there will be a negative definite divisor configuration of $b_2^-(X)$ components in $X$, which is a contradiction.
\end{proof}

Next, we also exclude the existence of non-toric exceptional spheres by analyzing degenerations of the fiber class in Section \ref{section:degfiberclass}.

\begin{proposition}\label{prop:nonontoric}
 For $D=\st_1\cup\st_2\cup\st_3\subseteq (X,\omega)$ which is full, there is no non-toric exceptional sphere if assuming $[\omega]\cdot K_{\omega}<0$.
\end{proposition}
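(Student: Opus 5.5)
We argue by contradiction. Suppose $C_E$ is a non-toric exceptional sphere for $D$, so $C_E$ meets exactly one component $S_j\subseteq\st_1$ (say), once, transversely. Blowing down $C_E$ gives a pair $(X',\omega',D')$. Here $D'=\st_1'\cup\st_2\cup\st_3$, and $\st_1'$ is the string $\st_1$ with $b_j$ replaced by $b_j-1\geq 1$. Since $l(D')=l(D)=b_2^-(X)=b_2^-(X')+1$ and $b_2^+(X')=1$, the configuration $D'$ cannot be negative definite. The strings $\st_2,\st_3$ are unchanged HJ strings, hence negative definite, so $\st_1'$ is not negative definite.

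Fix an orientation of $\st_1'$ and consider the sequence $\Delta_1=1,\Delta_2,\dots$ of Section \ref{section:resfiberclass}. Not being negative definite means some $\Delta_l\le 0$; let $k$ be the smallest index with $\Delta_{k+1}\le 0$, so $\Delta_1,\dots,\Delta_k>0$. The only entry of $\st_1'$ that may fail to be $\ge 2$ is $b_j-1$. If it equals $1$ and sits at an endpoint, Lemma \ref{lem:-1endpoint} (after reorienting) makes $\st_1'$ negative definite, which is a contradiction. In all remaining cases the recurrence $\Delta_l=b_{l-1}\Delta_{l-1}-\Delta_{l-2}$ forces $k\ge j$: the sequence $\Delta_l$ increases as long as all $b_i\ge2$. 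So the drop happens at or after the modified sphere. We then set $(p,q)=(-\Delta_{k+1},\Delta_k)$ and perform the $L$ toric blowups prescribed by the weight sequence $\mathcal{W}(p,q)$. This yields a string $\tilde\st_1$ in $X'\#L\overline{\CP}^2$ whose only $(-1)$-component is $C_L$, provided $b_j-1\ge2$ or the single $1$ is absorbed appropriately. The case $b_j-1=1$ in the interior needs a separate check, using the toric blowup description that all $\tilde C_i$ with $i<L$ have square $\le -2$.

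Next we show that the resolution fiber class $\tilde F_k$ is represented by an embedded sphere meeting $\tilde\st_1$ only at $C_L$, via Lemma \ref{lem:resolutionfiberclass}. The key input is the area condition $[\tilde\omega]\cdot(K_{\tilde\omega}-\tilde F_k)<0$. This is where the hypothesis $[\omega]\cdot K_\omega<0$ enters. We have $K_{\tilde\omega}=K_{\omega'}+\sum e_i$, and toric blowups of small size only perturb areas slightly. So we plan to show $\tilde\omega(\tilde F_k)>0$ and then use $[\omega']\cdot K_{\omega'}<0$, which follows from $[\omega]\cdot K_\omega<0$ by Lemma \ref{lem:blowdownarea}-type bookkeeping, since $\omega(K)$ increases by $\omega(E)$ under blowdown. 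Choosing the toric blowup sizes small enough should give the strict inequality. Alternatively, one can invoke \cite[Corollary 2.8]{LN25} directly: a class with square $0$, $K\cdot F=-2$ and positive area in a rational manifold with $\omega\cdot K<0$ has nonzero SW. This area/SW verification is the step I expect to be the main obstacle.

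Finally we count. In $X'\#L\overline{\CP}^2$ we have $b_2^-=b_2^-(X')+L=b_2^-(X)-1+L$, while $\tilde\st_1\cup\st_2\cup\st_3$ has $b_2^-(X)+L$ components. Removing $C_L$ leaves exactly $b_2^-$ components disjoint from the embedded fiber sphere. The pieces $\st_2,\st_3$ are negative definite. The two substrings of $\tilde\st_1\setminus C_L$ consist of the truncations $S_1,\dots,S_k$, which are negative definite because $\Delta_1,\dots,\Delta_{k+1}$ behave appropriately after the blowups, and of the remaining part with all entries $\ge2$. Alternatively, degenerating $\tilde F_k$ onto $\tilde\st_1$ and applying Proposition \ref{prop:fiberclass}(2) shows these proper subtrees are negative definite. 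This contradicts Lemma \ref{lem:fibercomplement}, so no non-toric exceptional sphere exists.
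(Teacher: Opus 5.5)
\textbf{There is a genuine gap in your final count.} You claim that the piece of $\tilde\st_1\setminus C_L$ containing $\tilde S_1,\dots,\tilde S_k$ is negative definite. Call this piece $\mathcal{H}_0$; it also contains the part of the blowup chain on that side of $C_L$. It is never negative definite.

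\emph{Before blowing up.} This is immediate: $\det M_k(\st_1')=\Delta_{k+1}\le 0$.

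\emph{After blowing up.} $\tilde F_k$ is an integral combination of the components of $\tilde\st_1$, and it pairs to zero with every component except $C_L$.
\begin{itemize}[nosep]
\item Its $C_L$-coefficient therefore equals $\tilde F_k^2=0$.
\item Its part $A_1$ on the other piece $\mathcal{H}_1$ satisfies $A_1^2=\tilde F_k\cdot A_1=0$. Since $\mathcal{H}_1$ is Hirzebruch--Jung, hence negative definite, $A_1=0$.
\item So $\tilde F_k$ is a nonzero square-zero class supported on $\mathcal{H}_0$.
\end{itemize}
In the example of Section \ref{section:resfiberclass}, for instance, $F=[\tilde S_1]+3[\tilde S_2]+5[\tilde S_3]+2[\tilde S_4]+[\tilde C_2]$.

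Consequently $\mathcal{H}_0$ is the \emph{entire} support of the reducible fiber through it, not a proper subtree, and Proposition \ref{prop:fiberclass}(2) says nothing about it. Your $b_2^-$ components in the complement of the fiber are exactly the configuration that Lemma \ref{lem:fibercomplement} says is \emph{not} negative definite, so no contradiction arises.

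A sanity check confirms something is off. Your count never uses that there are three strings, yet for a single full HJ string the statement is false. In $\CP^2\#4\overline{\CP}^2$, blow up four infinitely near points with the first three on a line. This gives the chain $H-E_1-E_2-E_3,\ E_3-E_4,\ E_2-E_3,\ E_1-E_2$ of $(-2)$-spheres, and $E_4$ is a non-toric exceptional sphere. Contracting $E_4$ gives the sequence $(2,1,2,2)$ with $\Delta=(1,2,1,0)$. Here $F_3=H-E_1$ is supported precisely on the first three components.

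\textbf{What is missing is a way to gain one extra negative direction.} The paper does this as follows.
\begin{itemize}[nosep]
\item It takes the reducible $J$-holomorphic fibers $\Theta_0,\Theta_1,\Theta_2,\Theta_3$ in class $\tilde F$ through $\mathcal{H}_0,\mathcal{H}_1,\st_2,\st_3$.
\item It deletes from $\mathcal{H}_0$ and $\mathcal{H}_1$ the components $C_{\alpha_0},C_{\alpha_1}$ adjacent to $C_L$, so that what remains are proper subtrees.
\item It adds $C_L$ together with two further curves $B_2,B_3$ drawn from these fibers. When fibers coincide, these are components joining two of the strings. Otherwise they are $(-1)$-components disjoint from $C_L$, which exist by Proposition \ref{prop:fiberclass}(3) because HJ strings contain no $(-1)$-spheres.
\end{itemize}
The result is a disjoint union of $C_L$ and proper subtrees of reducible fibers. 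It is therefore negative definite, with $b_2^-(X)+L=b_2^-(X'\#L\overline{\CP}^2)+1$ components, which contradicts the rank bound directly. The curves $B_2,B_3$ are exactly where the second and third strings are indispensable.

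\textbf{Two minor points.}
\begin{itemize}[nosep]
\item The case $b_j-1\ge 2$ cannot occur, since then $\st_1'$ would be HJ and hence negative definite. So the modified sphere is always an interior $(-1)$-sphere, and your ``separate check'' is in fact the only case.
\item The area inequality you flag as the main obstacle is routine. The paper handles it exactly as you propose.
\end{itemize}
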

  
\begin{proof}
  Suppose there was a non-toric exceptional sphere $C_E$ intersecting at the $k$-th component $A_k$ of the HJ string $\st_1=\cup_{i=1}^{n} A_i$ with negative self-intersection sequence $(a_1,a_2,\cdots,a_{n})$. Then the configuration $\st_1\cup C_E$ can not be negative definite since otherwise $D\cup C_E$ will be a negative definite configuration with $b_2^-(X)+1$ components. Let us perform non-toric blowdown of $C_E$ so that $\st_1$ descends to a non-negative definite string $\st'=\cup_{i=1}^nA_i'$ in the blowdown manifold $(X',\omega')$ with negative self-intersection sequence $(a_1,a_2,\cdots,a_{k-1},a_k-1,a_{k+1},\cdots,a_n)$, while the strings $\st_2,\st_3$ are unaffected. Thus, there exists an index $l\ge k$ such that
\[
  \Delta_i=\det M_{i-1}(\st') > 0 \quad \text{for all } i \le l,
  \qquad
  \Delta_{l+1}=\det M_{l}(\st') \le 0.
\]
By the discussion in Section \ref{section:resfiberclass}, we can  define classes \[F_l:=\sum_{i=1}^l\Delta_i[A_i']\in H_2(X';\mathbb{Z}),\qquad \tilde{F}_l:=F_l-\sum_{i=1}^Lm_ie_i\in H_2(X'\# L\overline{\CP}^2;\mathbb{Z}),\]
where $(m_1,\cdots,m_L)$ is the weight sequence for the coprime pair $(-\Delta_{l+1},\Delta_{l})$. Notice that if we choose the toric blowup symplectic form $\tilde{\omega}$ on $X'\# L\overline{\CP}^2$ to have sufficiently small areas on exceptional classes $e_1,\cdots,e_L$, then the inequality
\[[\tilde{\omega}]\cdot(K_{\tilde{\omega}}-\tilde{F_l})<0\]
will follow from the inequality
\[[\omega]\cdot(K_{\omega}-F_l)<0\] by our assumption $[\omega]\cdot K_{\omega}<0$ and the obvious inequality $[\omega]\cdot F_l>0$.
Hence, by Lemma \ref{lem:resolutionfiberclass}, $\tilde{F}_l$ is a fiber class. Denote by $\tilde{\st'}$ the total transform of $\st'$ under $L$ successive toric blowups and write
\[\tilde{\st'}=\tilde{A}_1'\cup\cdots\cup\tilde{A}_k'\cup\cdots\cup\tilde{A}'_{l}\cup \mathcal{L}\cup\tilde{A}'_{l+1}\cup \cdots\cup\tilde{A}_n'\]
where $\tilde{A}_i'$ is the proper transform of $A_i'$ and $\mathcal{L}$ is a string of length $L$ consisting of the proper transforms of exceptional spheres in the process of toric blowups. Note that $\mathcal{L}$ will contain a unique $(-1)$-component $C_L$ coming from the last blowup and all other components have self-intersection $\leq -2$. Lemma also \ref{lem:resolutionfiberclass} guarantees that for generic $J\in\mathcal{J}(\tilde{\st'})$, $\tilde{F}_l$ can be represented by embedded $J$-holomorphic spheres intersecting $\tilde{\st'}$ exactly once at the $C_L$-component.

Observe that $\tilde{\st'}\setminus C_L$ is a disjoint union of two strings $\mathcal{H}_0,\mathcal{H}_1$, and the one (say $\mathcal{H}_1$) which does not contain $\tilde{A}_k'$ is Hirzebruch--Jung. We consider the $J$-holomorphic subvarieties $\Theta_0,\Theta_1,\Theta_2,\Theta_3$ in class $\tilde{F}_l$ passing through the point on $\mathcal{H}_0,\mathcal{H}_1,\st_2,\st_3$ respectively. Since $\tilde{F}_l$ has an embedded representative which is disjoint from $\mathcal{H}_0,\mathcal{H}_1,\st_2,\st_3$, all the components in these four strings must be entirely contained as components in $\Theta_0,\Theta_1,\Theta_2,\Theta_3$ respectively, due to positivity of intersection. When $\Theta_2\neq \Theta_3$, we choose some component $B_j$ of $\Theta_j$ for each $j\in\{2,3\}$ by the following.
\begin{itemize}
  \item If $\Theta_j=\Theta_i$ for some $i\in\{0,1\}$, then there will be components in $\Theta_i=\Theta_j$ connecting $\mathcal{H}_i$ to $\st_j$, and we may take any one of them and call it $B_j$.
  \item Otherwise, since $[\tilde{F}_l]\cdot [C_L]=1$, $\Theta_j$ must contain a $(-1)$-component disjoint from $C_L$ by Proposition \ref{prop:fiberclass}(3), and we call this $(-1)$-component $B_j$. 
\end{itemize}
When $\Theta_2=\Theta_3$, we choose $B_2,B_3$ by the following.
\begin{itemize}
  \item If $\Theta_2=\Theta_3=\Theta_i$ for some $i\in\{0,1\}$, we choose $B_2,B_3$ to be some components in $\Theta_2=\Theta_3$ connecting $\st_2$ to $\st_3$ and  $\st_2\cup \st_3$ to $\mathcal{H}_i$ respectively.
  \item Otherwise, we still choose $B_2$ to be some component in $\Theta_2=\Theta_3$ connecting $\st_2$ to $\st_3$ but $B_3$ to be a $(-1)$-component in $\Theta_1$. Note that this is possible by Proposition \ref{prop:fiberclass}(3) since $\mathcal{H}_1$ is Hirzebruch--Jung.
\end{itemize}

  Now, denote by $C_{\alpha_0},C_{\alpha_1}$ the components in $\mathcal{L}$ adjacent to $C_L$ and contained in $\Theta_0,\Theta_1$ respectively. To complete the proof, we analyze the following configuration (see Figure \ref{fig:nonontoric} below) 
  \[\mathcal{D}:=(\mathcal{H}_0\setminus C_{\alpha_0})\cup(\mathcal{H}_1\setminus C_{\alpha_1})\cup \st_2\cup \st_3\cup C_L\cup B_2\cup B_3\]whose number of components is
  \[(l(\mathcal{H}_0)-1)+(l(\mathcal{H}_1)-1)+l(\st_2)+l(\st_3)+1+1+1=b_2^-(X)+L.\] In any case, our choice of $B_2,B_3$ makes $\mathcal{D}$ a disjoint union of $C_L$ and several proper subtrees of reducible representatives of the fiber class $\tilde{F}_l$. Hence $\mathcal{D}$ is negative definite by Proposition \ref{prop:fiberclass}(2). However, this contradicts $b_2^-(X'\# L\overline{\CP}^2)=b_2^-(X)-1+L$.
\end{proof}

\begin{figure}[htbp]
  \centering
 
\begin{minipage}[c]{0.32\textwidth}
    \centering
    \resizebox{\linewidth}{0.74\linewidth}{%
    \begin{tikzpicture}[x=1cm,y=1cm,line cap=round,line join=round]
      \tikzset{
        div/.style={draw=black!60,dashed,line width=0.8pt},
        comp/.style={draw=black!65,line width=0.9pt},
        base/.style={draw=black!65,line width=0.8pt}
      }
      \node at (6.5,0.25) {$C_L$};
      \node at (2,5) {$\mathcal{H}_0$};
      \node at (5,5) {$\st_2$};
      \node at (8,5) {$\st_3$};
      \node at (11,5) {$\mathcal{H}_1$};
      \draw[base] (-0.3,0) -- (13.4,0);
      \fill (1.95,0) circle (2.2pt);
      \fill (11.2,0) circle (2.2pt);
      \fill (7.95,0) circle (2.2pt);
      \node at (1.9,-0.5) {$\Theta_0=\Theta_2$};
      \node at (7.9,-0.5) {$\Theta_3$};
      \node at (11.1,-0.5) {$\Theta_1$};

      \draw[div] (1.2,-1.15) rectangle (2.7,4.55);
      \node at (1.95,4.12) {$\cdots$};
      \node at (1.95,2.42) {$\cdots$};
      \node at (1.45,3.2) {$\tilde{A}_k'$};
      \node at (2.3,0.25) {$C_{\alpha_0}$};
      \draw[comp] (1.72,3.55) -- (2.25,2.72);
      \draw[comp] (1.72,2.08) -- (2.12,0.95);
      \draw[comp] (2.12,1.60) -- (1.58,0.32);
      \draw[comp] (1.66,0.72) -- (2.08,-0.35);

      \draw[div] (4.3,1.5) rectangle (5.8,4.65);
      \node at (5.05,1.7) {$\cdots$};
      \draw[comp] (4.78,4.15) -- (5.45,3.02);
      \draw[comp] (4.58,3.08) -- (5.28,1.98);
      \draw[comp] (5.33,3.48) -- (4.68,2.55);
      \draw[base] (2.7,3.30) -- (4.3,3.30);
      \node at (3.4,3.5) {$B_2$};

      \draw[div] (7.2,1.5) rectangle (8.7,4.65);
      \node at (7.95,1.7) {$\cdots$};
      \draw[comp] (7.95,1.5) -- (7.95,0);
      \draw[comp] (7.68,4.20) -- (8.33,2.95);
      \draw[comp] (7.50,3.02) -- (8.18,1.98);
      \draw[comp] (8.22,3.62) -- (7.62,2.60);
      \draw[comp] (8.7,3.3) -- (9.6,3.3);
      \node at (9.1,3.5) {$B_3$};

      \draw[div] (10.2,-1.15) rectangle (11.9,4.65);
      \node at (11.05,3.05) {$\cdots$};
      \node at (11.5,0.25) {$C_{\alpha_1}$};
      \draw[comp] (10.95,1.85) -- (11.55,0.72);
      \draw[comp] (11.40,1.38) -- (10.88,0.18);
      \draw[comp] (10.98,0.72) -- (11.30,-0.35);
    \end{tikzpicture}%
    }
  \end{minipage}\hfill
  \begin{minipage}[c]{0.32\textwidth}
    \centering
    \resizebox{\linewidth}{0.74\linewidth}{%
    \begin{tikzpicture}[x=1cm,y=1cm,line cap=round,line join=round]
      \tikzset{
        div/.style={draw=black!60,dashed,line width=0.8pt},
        comp/.style={draw=black!65,line width=0.9pt},
        base/.style={draw=black!65,line width=0.8pt}
      }
      \node at (6.5,0.25) {$C_L$};
      \node at (2,5) {$\mathcal{H}_0$};
      \node at (5,5) {$\st_2$};
      \node at (8,5) {$\st_3$};
      \node at (11,5) {$\mathcal{H}_1$};
      \draw[base] (-0.3,0) -- (13.4,0);
      \fill (1.95,0) circle (2.2pt);
      \fill (11.2,0) circle (2.2pt);
      \node at (1.9,-0.5) {$\Theta_0=\Theta_2=\Theta_3$};
      \node at (11.1,-0.5) {$\Theta_1$};

      \draw[div] (1.2,-1.15) rectangle (2.7,4.55);
      \node at (1.95,4.12) {$\cdots$};
      \node at (1.95,2.42) {$\cdots$};
      \node at (1.45,3.2) {$\tilde{A}_k'$};
      \node at (2.3,0.25) {$C_{\alpha_0}$};
      \draw[comp] (1.72,3.55) -- (2.25,2.72);
      \draw[comp] (1.72,2.08) -- (2.12,0.95);
      \draw[comp] (2.12,1.60) -- (1.58,0.32);
      \draw[comp] (1.66,0.72) -- (2.08,-0.35);

      \draw[div] (4.3,1.5) rectangle (5.8,4.65);
      \node at (5.05,1.7) {$\cdots$};
      \draw[comp] (4.78,4.15) -- (5.45,3.02);
      \draw[comp] (4.58,3.08) -- (5.28,1.98);
      \draw[comp] (5.33,3.48) -- (4.68,2.55);
      \draw[base] (2.7,3.30) -- (4.3,3.30);
      \node at (3.4,3.5) {$B_3$};

      \draw[div] (7.2,1.5) rectangle (8.7,4.65);
      \node at (7.95,1.7) {$\cdots$};
      \draw[comp] (7.68,4.20) -- (8.33,2.95);
      \draw[comp] (7.50,3.02) -- (8.18,1.98);
      \draw[comp] (8.22,3.62) -- (7.62,2.60);
      \draw[base] (5.8,2.55) -- (7.2,2.55);
      \node at (6.4,2.75) {$B_2$};

      \draw[div] (10.2,-1.15) rectangle (11.9,4.65);
      \node at (11.05,3.05) {$\cdots$};
      \node at (11.5,0.25) {$C_{\alpha_1}$};
      \draw[comp] (10.95,1.85) -- (11.55,0.72);
      \draw[comp] (11.40,1.38) -- (10.88,0.18);
      \draw[comp] (10.98,0.72) -- (11.30,-0.35);
    \end{tikzpicture}%
    }
  \end{minipage}\hfill
  \begin{minipage}[c]{0.32\textwidth}
    \centering
    \resizebox{\linewidth}{0.74\linewidth}{%
    \begin{tikzpicture}[x=1cm,y=1cm,line cap=round,line join=round]
      \tikzset{
        div/.style={draw=black!60,dashed,line width=0.8pt},
        comp/.style={draw=black!65,line width=0.9pt},
        base/.style={draw=black!65,line width=0.8pt}
      }
      \node at (6.5,0.25) {$C_L$};
      \node at (2,5) {$\mathcal{H}_0$};
      \node at (5,5) {$\st_2$};
      \node at (8,5) {$\st_3$};
      \node at (11,5) {$\mathcal{H}_1$};
      \draw[base] (-0.3,0) -- (13.4,0);
      \fill (1.95,0) circle (2.2pt);
      \fill (11.2,0) circle (2.2pt);
      \fill (7.95,0) circle (2.2pt);
      \node at (1.9,-0.5) {$\Theta_0$};
      \node at (7.9,-0.5) {$\Theta_2=\Theta_3$};
      \node at (11.1,-0.5) {$\Theta_1$};

      \draw[div] (1.2,-1.15) rectangle (2.7,4.55);
      \node at (1.95,4.12) {$\cdots$};
      \node at (1.95,2.42) {$\cdots$};
      \node at (1.45,3.2) {$\tilde{A}_k'$};
      \node at (2.3,0.25) {$C_{\alpha_0}$};
      \draw[comp] (1.72,3.55) -- (2.25,2.72);
      \draw[comp] (1.72,2.08) -- (2.12,0.95);
      \draw[comp] (2.12,1.60) -- (1.58,0.32);
      \draw[comp] (1.66,0.72) -- (2.08,-0.35);

      \draw[div] (4.3,1.5) rectangle (5.8,4.65);
      \node at (5.05,1.7) {$\cdots$};
      \draw[comp] (4.78,4.15) -- (5.45,3.02);
      \draw[comp] (4.58,3.08) -- (5.28,1.98);
      \draw[comp] (5.33,3.48) -- (4.68,2.55);
      \node at (6.4,2.75) {$B_2$};

      \draw[div] (7.2,1.5) rectangle (8.7,4.65);
      \node at (7.95,1.7) {$\cdots$};
      \draw[comp] (7.68,4.20) -- (8.33,2.95);
      \draw[comp] (7.50,3.02) -- (8.18,1.98);
      \draw[comp] (8.22,3.62) -- (7.62,2.60);
      \draw[base] (5.8,2.55) -- (7.2,2.55);
      \draw[comp] (7.95,1.5) -- (7.95,0);

      \draw[div] (10.2,-1.15) rectangle (11.9,4.65);
      \node at (11.05,3.05) {$\cdots$};
      \node at (11.5,0.25) {$C_{\alpha_1}$};
      \draw[comp] (10.95,1.85) -- (11.55,0.72);
      \draw[comp] (11.40,1.38) -- (10.88,0.18);
      \draw[comp] (10.98,0.72) -- (11.30,-0.35);
      \draw[comp] (11.9,3) -- (12.9,3);
      \node at (12.4,3.2) {$B_3$};
    \end{tikzpicture}%
    }
  \end{minipage}\hfill
  \caption{Schematic configurations used in the proof of Proposition \ref{prop:nonontoric}. The dashed boxes indicate the strings $\mathcal{H}_0$, $\mathcal{H}_1$, $\st_2$, and $\st_3$, while the solid segments represent the components of reducible representatives of the fiber class. The bottom horizontal segment denotes the $C_L$-component, which can be identified with the moduli space of curves in the fiber class by \cite{Zhangmoduli}. The dots on $C_L$ mark the elements $\Theta_0,\Theta_1,\Theta_2,\Theta_3$ in moduli space. Depending on which $\Theta_i$ coincide, we choose different $B_2,B_3$-components to form $\mathcal{D}$.}
  \label{fig:nonontoric}
\end{figure}

With the above understood, we now investigate the behaviour of the exceptional sphere of minimal symplectic area under the additional $(a,b,c)$-type and gap-admissible conditions.

\begin{proposition}\label{prop:HJandEmin}
  Assume that the disjoint union of three HJ strings $\st_1,\st_2,\st_3$ is full, of $(a,b,c)$-type for pairwise coprime $(a,b,c)$ and gap-admissible. Let $E_{\min}\in\mathcal{E}_{\omega}$ be an exceptional class of minimal symplectic area.
  Then
  \(
    E_{\min}\cdot\bigl([\st_1]+[\st_2]+[\st_3]\bigr)= 2,
  \)
  and for each $\alpha\in\{1,2,3\}$,
  \(
    0\le E_{\min}\cdot[\st_\alpha]\le 1.
  \)
\end{proposition}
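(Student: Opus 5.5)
We would take an embedded $J$-holomorphic representative $C_{E_{\min}}$ of $E_{\min}$ for a divisor-adapted $J\in\mathcal{J}(D)$; this exists by \cite[Lemma 1.2]{Pinsonnault}, since fullness together with $l(\st_\alpha)\ge 1$ gives $b_2(X)\ge 4$. The hypotheses pass to the lemmas we need. Gap-admissibility gives $[\omega]\cdot(K_\omega+[D])<0$, and since $[\omega]\cdot[D]>0$ this yields $[\omega]\cdot K_\omega<0$. Hence Lemma \ref{lem:noexterior} and Proposition \ref{prop:nonontoric} both apply. Since every component of $D$ has square $\le -2$, $C_{E_{\min}}$ is not a component of $D$. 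Positivity of intersections then gives $E_{\min}\cdot[S]\ge 0$ for every component $S$ of $D$, so $E_{\min}\cdot[\st_\alpha]\ge 0$ for all $\alpha$.

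\textbf{Lower bound $E_{\min}\cdot[D]\ge 2$.} If $E_{\min}\cdot[D]=0$, then $C_{E_{\min}}$ is an exterior exceptional sphere, which Lemma \ref{lem:noexterior} excludes. If $E_{\min}\cdot[D]=1$, then $C_{E_{\min}}$ meets exactly one component once transversely; after a small perturbation making the intersection $\omega$-orthogonal, it is a non-toric exceptional sphere, which Proposition \ref{prop:nonontoric} excludes.

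\textbf{Upper bounds.} We would use an area comparison. Writing $\pi:X\to X'$ for the blowdown of $C_{E_{\min}}$, we have $K_\omega=\pi^*K_{\omega'}+E_{\min}$ and $[D]=\pi^*[D']-(E_{\min}\cdot[D])E_{\min}$. Therefore
\[
[\omega]\cdot(K_\omega+[D])=[\omega']\cdot(K_{\omega'}+[D'])-(E_{\min}\cdot[D]-1)\,\omega(E_{\min}).
\]
We would bound the left-hand side from below using exceptional classes of area at least $\omega(E_{\min})$. The aim is to exhibit sufficiently many log-exceptional classes and compare areas against $\omega(E_{\min})$. This is the step we expect to be hardest.

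For the bound $E_{\min}\cdot[\st_\alpha]\le 1$, suppose $E_{\min}\cdot[\st_\alpha]\ge 2$. We would blow down $C_{E_{\min}}$. By an argument in the spirit of Lemma \ref{lem:-1endpoint} and Lemma \ref{lem:adjacent-1}, the image $\st'_\alpha$ would fail to be negative definite, or would become a configuration with a loop or tangency. In the former case we would build a resolution fiber class as in Lemma \ref{lem:resolutionfiberclass} and run the counting argument of Proposition \ref{prop:nonontoric}: the fiber-class degeneration would produce a negative definite configuration with too many components. The $(a,b,c)$-type condition, via pairwise coprimality, should rule out the degenerate determinant cases here.

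For the bound $E_{\min}\cdot[D]\le 2$, suppose instead that $E_{\min}$ meets two strings, so $E_{\min}\cdot[\st_i]=E_{\min}\cdot[\st_j]=1$, and meets a third with $E_{\min}\cdot[\st_k]\ge 1$. Then $E_{\min}$ lies in both $\mathcal{E}^{\text{log}}(\st_k,\st_i;D)$ and $\mathcal{E}^{\text{log}}(\st_k,\st_j;D)$, so $\e(\st_k,\st_i;D)+\e(\st_k,\st_j;D)\ge 2\omega(E_{\min})$. Gap-admissibility then gives $[\omega]\cdot(K_\omega+[D])<-2\omega(E_{\min})$. We would contradict this using Zhang's curve cone theorem, in the form used in Proposition \ref{prop:-K-D=Emin}, to show that $-K_\omega-[D]$ is represented, modulo $E_{\min}$, by a class of area at most $2\omega(E_{\min})$ whenever $E_{\min}\cdot[D]\ge 3$. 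If this fails, we would fall back on an exchange argument: blow down $E_{\min}$, and use the image divisor, now a string joining $\st_i$ and $\st_j$ through a point, together with the connected-divisor result Proposition \ref{prop:-K-D=Emin} to derive the contradiction.
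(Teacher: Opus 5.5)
\textbf{The gap: the per-string bound.} Your lower bound $E_{\min}\cdot[D]\ge 2$ matches the paper. The per-string bound $E_{\min}\cdot[\st_\alpha]\le 1$, however, has a genuine gap. If $E_{\min}\cdot[\st_\alpha]\ge 2$, then the image of $\st_\alpha$ after blowing down $C_{E_{\min}}$ is never a string: it acquires a cycle, a singular point, or a tangency between components. The resolution-fiber-class construction of Section \ref{section:resfiberclass} needs a chain and its $\Delta$-recursion, so it does not apply. You give no argument for the configurations that actually arise. The remark that pairwise coprimality rules out degenerate determinants has no content.

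The missing idea is that $\st_\alpha$ by itself is a connected quasi-minimal pair: $b_2(X)\ge 3$, $E_{\min}\cdot[\st_\alpha]\ge 2$, and $[\omega]\cdot(K_\omega+[\st_\alpha])<0$ by gap-admissibility. Proposition \ref{prop:-K-D=Emin} then gives $-K_\omega=E_{\min}+[\st_\alpha]$. Now let $C$ be any component of the other two strings. Adjunction and positivity give $[C]^2=-2+E_{\min}\cdot[C]\ge -2$, so both other strings are $(-2)$-strings. This is where the $(a,b,c)$-type condition enters. Lemma \ref{lem:-2chains} fixes the lengths of those strings as $a-1$ and $b-1$. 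It also forces $\st_\alpha$ to have sequence $(-a,-k,-b)$, $(1-a,1-b)$ or $(2-a)$. Then $b_2^-(X)=9-K_\omega^2=6-[\st_\alpha]^2$ contradicts fullness in each case.

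\textbf{The bound $E_{\min}\cdot[D]\le 2$.} Your instinct here is right, but it should be made precise, and the area comparison you flag as the hardest step is unnecessary. Apply Proposition \ref{prop:-K-D=Emin} to the connected divisor $\overline{D}=D\cup C_{E_{\min}}$. It has $[\overline{D}]\cdot E_{\min}=3-1=2$, and by the strict inequality you derived from gap-admissibility, $[\omega]\cdot(K_\omega+[\overline{D}])<-\omega(E_{\min})<0$. The proposition yields $K_\omega+[D]=-2E_{\min}$. Its area $-2\omega(E_{\min})$ contradicts that same strict inequality.

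The paper argues differently here. It uses Proposition \ref{prop:twokinds} in the second-kind case to produce a toric, non-toric, half-toric or exterior exceptional sphere for $\overline{D}$, and then excludes each type. Your route, once completed this way, is slightly shorter. Drop your fallback, though: blowing down $C_{E_{\min}}$ makes the images of all three strings pass through one point. The result is not a divisor, let alone a string joining two of them.
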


\begin{proof}
  Choose any divisor-adapted almost complex structure $J\in\mathcal{J}(D)$. Since $E_{\text{min}}$ has an embedded $J$-holomorphic representative $C_{E_{\text{min}}}$ and $\st_1,\st_2,\st_3$ are Hirzebruch--Jung, positivity of intersection implies that 
  \(
    0\le E_{\min}\cdot[\st_\alpha]
  \)
  for each $\alpha\in\{1,2,3\}$. 
  
  If $E_{\min}\cdot[\st_\alpha]\geq 2$ for some $\alpha$, then the string $\st_\alpha$ is quasi-minimal (Definition \ref{def:quasimin}). Since $[\omega]\cdot([S_\alpha]+K_\omega)<0$, it follows from Proposition \ref{prop:-K-D=Emin} that $E_{\min}\cdot[\st_\alpha]= 2$ and $\st_\alpha$ must be quasi-minimal of first kind by the Hirzebruch--Jung assumption. Moreover, Proposition \ref{prop:twokinds} implies that $\st_\alpha\cup C_{E_\text{min}}$ is a cycle of spheres representing the class $-K_\omega$. By adjunction formula and positivity of intersection, for any component $C$ in the other two strings we have 
  \[-2\geq [C]^2=-2-K_\omega\cdot[C]=-2-([\st_\alpha]+E_{\text{min}})\cdot [C]\leq -2.\]
  Hence, the other two strings must be $(-2)$-strings. By Lemma \ref{lem:-2chains}, up to permutation of $(a,b,c)$, we may assume that two $(-2)$-strings have lengths $a-1$ and $b-1$ and the self-intersection sequence of $\st_\alpha$ is one of the following: $(-a,-k,-b)$ for some $k\geq 1$, $(1-a,1-b)$, or $(2-a)$. Then, we compute
  \[b_2^-(X)=9-K_{\omega}^2=9-([\st_\alpha]+E_{\text{min}})^2=6-[\st_\alpha]^2.\]
  For the case of $(-a,-k,-b)$, $[\st_\alpha]^2=-a-k-b+4$ and $\sum_{i=1}^3l(\st_i)=a+b+1$. The full condition would then imply that $k=-1$, which is a contradiction. Similarly, for the case of $(1-a,1-b)$ (resp. $(2-a)$), $[\st_\alpha]^2=-a-b+4$ (resp. $2-a$) and $\sum_{i=1}^3l(\st_i)=a+b$ (resp. $a+1$), which also contradicts with the full condition. Therefore, we have seen that $E_{\min}\cdot[\st_\alpha]\in\{0,1\}$. 

  By Proposition \ref{prop:nonontoric}, we cannot have
  \(
    E_{\min}\cdot\bigl([\st_1]+[\st_2]+[\st_3]\bigr)=1,
  \)
  since this would produce a non-toric exceptional sphere. It therefore remains to rule out the case
  \[
    E_{\min}\cdot[\st_1]=E_{\min}\cdot[\st_2]=E_{\min}\cdot[\st_3]=1.
  \]
  Suppose this occurs. Then, by the gap-admissible assumption, the divisor
  \[
    \overline{D}:=\st_1\cup\st_2\cup\st_3\cup C_{E_\text{min}}
  \]
  satisfies $[\omega]\cdot(K_\omega+[\overline{D}])<0$, and hence $(X,\omega,\overline{D})$ is a quasi-minimal pair of second kind. Since there are three strings, we know $b_2(X)\geq 4$ by the full assumption. Thus,  Proposition \ref{prop:twokinds} implies that there must be a toric/non-toric/half-toric/exterior exceptional sphere to the divisor $\overline{D}$. However, there could not be toric or half-toric ones since $\st_i$'s are Hirzebruch--Jung; there could not be non-toric ones since Proposition \ref{prop:nonontoric} implies that they must be disjoint from $\st_1\cup\st_2\cup\st_3$ which will contradict with Lemma \ref{lem:noexterior}; there could not be exterior ones directly by Lemma \ref{lem:noexterior}.
\end{proof}

\subsection{Connecting three HJ strings into one by exceptional spheres}\label{section:connecting}
Therefore, under the assumptions of Definition \ref{def:3assumptions}, we may assume that $C_{E_{\text{min}}}$ intersects $\st_1$ and $\st_2$ by Proposition \ref{prop:HJandEmin}. It follows that
\[
  \overline{D} = \st_{12} \cup \st_3,
  \qquad
  \st_{12} := \st_1  \cup C_{E_{\text{min}}}\cup \st_2,
\]
so $\overline{D}$ has exactly two connected components, namely $\st_{12}$ and $\st_3$. 
Our next goal is to connect $\st_3$ to either $\st_1$ or $\st_2$ by some exceptional sphere.

\begin{proposition}\label{prop:secondexceptional}
 Under the above assumptions, after possibly exchanging $\st_1$ and $\st_2$, there exists a symplectic exceptional sphere $C_e$ that intersects exactly one component of $\st_1$ and $\st_3$ transversely and positively in one point each, and is disjoint from $\st_2$.
\end{proposition}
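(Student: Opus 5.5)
The plan is to find $C_e$ by repeating the minimal-area argument of Proposition \ref{prop:HJandEmin}, now on the blown-down pair. Blow down $C_{E_{\min}}$, which meets $\st_1$ and $\st_2$ in one point each. This is a toric-type blowdown for the connected string $\st_{12}$. The result $(X',\omega')$ has $b_2^-(X')=b_2^-(X)-1$ and contains the two disjoint strings $\st_{12}'$ and $\st_3$. Here $\st_{12}'$ is the concatenation of $\st_1,\st_2$ with the two spheres adjacent to the old $(-1)$-sphere each raised by one in self-intersection.

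First we check that we stay in $\kappa^s=-\infty$. Gap-admissibility (with $i=3$) gives
\[[\omega]\cdot(K_\omega+[D])<-\e(\st_3,\st_1;D)-\e(\st_3,\st_2;D).\]
Since $K_\omega+[\overline{D}]=K_\omega+[D]+E_{\min}$ and $\omega(E_{\min})\le \e(\st_1,\st_2;D)$, the gap inequalities for $i=1,2$ also control $[\omega']\cdot(K_{\omega'}+[\st_{12}']+[\st_3])$. Alternatively we apply Lemma \ref{lem:blowdownarea} directly. The aim is to keep
\[[\omega]\cdot(K_\omega+[\overline D])<-\e(\st_3,\st_1;D)-\e(\st_3,\st_2;D)+\omega(E_{\min})\]
in a usable form for later area comparisons.

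Second, we produce an exceptional class $E$ in $X$ with $E\cdot E_{\min}=0$ (so it survives the blowdown), $E\cdot[\st_3]\ge1$, and $E\cdot([\st_1]+[\st_2])\ge1$. The candidate is $E$ of minimal area among exceptional classes orthogonal to $E_{\min}$, equivalently $E_{\min}'$ of $X'$ (note $b_2(X')\ge 3$ by fullness). We rerun the argument of Proposition \ref{prop:HJandEmin} for the divisor $\overline{D}$, with a $J\in\mathcal{J}(\overline{D})$ so that $C_E$ is embedded.
\begin{itemize}[nosep]
  \item Positivity against $\st_3$ and against the HJ parts of $\st_1,\st_2$ holds; against $C_{E_{\min}}$ it is $0$.
  \item An exterior $C_E$ is excluded by Lemma \ref{lem:noexterior}.
  \item A non-toric $C_E$ meeting only one string is excluded by Proposition \ref{prop:nonontoric}, applied in $X$ to $D$.
  \item If $C_E$ met only $\st_1\cup\st_2$, with $E\cdot([\st_1]+[\st_2])=2$, then $\st_{12}\cup C_E$ would close up. We rule this out as in the quasi-minimal first-kind analysis (Propositions \ref{prop:-K-D=Emin} and \ref{prop:twokinds} applied to the connected pair $(X,\omega,\st_{12})$). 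That analysis forces $\st_3$ to be a $(-2)$-string, and then a length count as in Proposition \ref{prop:HJandEmin}, using Lemma \ref{lem:-2chains}, contradicts fullness.
\end{itemize}
Hence $E\cdot[\st_3]\ge 1$, and adjunction-type counting (as in Proposition \ref{prop:onlyif}) should force $E\cdot[\st_3]=1$ and $E\cdot([\st_1]+[\st_2])=1$. After relabeling $\st_1\leftrightarrow\st_2$, $C_E$ meets $\st_1$ once and misses $\st_2$.

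Third, we upgrade intersection numbers to geometric single transverse points on exactly one component each. Since every component of $\st_i$ has self-intersection $\le -2$ and $C_E$ is $J$-holomorphic with $J$ divisor-adapted, we have $E\cdot[S]\ge0$ for each component $S$. A total pairing of $1$ with each string therefore means one transverse point on a single component. We then perturb to $\omega$-orthogonality as in Lemma \ref{lem:blowupexceptional}.

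The main obstacle is the second step: showing that the minimal class in the complement of $E_{\min}$ really meets $\st_3$. This is exactly where the gap-admissible inequality with index $3$ must enter. If $C_E$ hit only $\st_{12}$, its area would bound $\e(\st_1,\st_2;D)$-type quantities, and we would have to show that the resulting $\overline{D}\cup C_E$ violates $[\omega]\cdot(K_\omega+\cdot)<0$ or gives a second-kind quasi-minimal configuration with no removable sphere. I would first try the latter, copying the closing paragraph of Proposition \ref{prop:HJandEmin}.
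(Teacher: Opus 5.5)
Your opening moves (blowing down $C_{E_{\min}}$, keeping the adjoint area negative via Lemma \ref{lem:blowdownarea}, excluding exterior and non-toric spheres) agree with the paper. The gap is in your second step, and its key claim is false: the minimal-area exceptional class orthogonal to $E_{\min}$ (i.e.\ $E'_{\min}$ of $X'$) need not meet $\st_3$.

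Blowing down $C_{E_{\min}}$ raises its neighbours in $\st_1,\st_2$ by one. So a $(-2)$-neighbour $S$ becomes a $(-1)$-component of $\st_{12}'$. The class $[S]+E_{\min}$ is exceptional, is orthogonal to $E_{\min}$, pairs to zero with $[\st_3]$, and has area $\omega(S)+\omega(E_{\min})$, which can be smaller than that of every connecting class. For example, take the minimal resolution of $\CP(2,3,5)$ with small truncations. There $E_{\min}=[N_a]$ has area close to $2$ and meets a $(-2)$-end $S$ of $\st_c$, so $[S]+[N_a]$ also has area close to $2$. By contrast, any exceptional class pairing positively with $\st_a$ pushes forward to a curve of weighted degree $\geq 3$, so its area is close to at least $3$.

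Your case list misses this because it assumes $C_E$ is embedded and not part of the divisor. But $E$ is not minimal in $X$, so nothing gives it an embedded representative there. For $J\in\mathcal{J}(\overline{D})$, the class $[S]+E_{\min}$ pairs to $-1$ with $S$ and is represented only by the reducible curve $S\cup C_{E_{\min}}$. So ``positivity against the HJ parts'' fails.

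The paper handles this by arguing by contradiction and iterating. Assume no sphere connects $\st_1$ or $\st_2$ to $\st_3$. Then a minimal curve of $X'$ either makes a component quasi-minimal, or is a toric or half-toric component of $D'$, which is blown down. Otherwise it would be non-toric, exterior, or (via Lemma \ref{lem:blowupexceptional}) connecting. Repeating, either a component of some $D''$ becomes quasi-minimal, or one reaches $b_2^-=1$ and finds the connecting class $F-e$.

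Gap-admissibility then shows the quasi-minimal curve $e''$ misses the other component. The relevant index is $i\in\{1,2\}$, the string met by both $E_{\min}$ and $e''$, not index $3$. That other component is therefore a $(-2)$-configuration, and $\Xi(\mathcal{C})=-3l(\mathcal{C})-\sum[C_i]^2$ gives the contradiction. On one hand $\Xi(D'')=-9$. On the other, $\Xi(D'')\ge\Xi(D)-2\ge-8$, by Corollary \ref{cor:3nminus6} and because $\Xi$ does not decrease under toric or half-toric blowdowns. (The text's bound $-4$ is a slip, but it does not affect the conclusion.)

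Your substitutes for these steps also do not work as written:
\begin{itemize}
\item Propositions \ref{prop:-K-D=Emin} and \ref{prop:twokinds} do not apply to $(X,\omega,\st_{12})$: $E$ is not minimal in $X$, and $E_{\min}\cdot[\st_{12}]=1$. They must be applied in $X'$.
\item In the ``closing up'' case only $\st_3$ is forced to be a $(-2)$-string, so Lemma \ref{lem:-2chains}, which needs two $(-2)$-strings, gives no length count.
\item ``Adjunction-type counting as in Proposition \ref{prop:onlyif}'' uses $-K_\omega=[D]+[N_a]+[N_b]+[N_c]$, which is the toric structure being proved. Cases with a pairing equal to $2$ must instead be ruled out by the gap estimate and the $\Xi$ count.
\end{itemize}
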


\begin{proof}
  We argue by contradiction, assuming that neither $\st_1$ nor $\st_2$ can be connected to $\st_3$ by a symplectic exceptional sphere. After performing the toric blowdown of the $C_{E_{\text{min}}}$-component, we obtain a new pair $(X',\omega',D')$ where $D'=\st_{12}'\cup\st_3'$ such that $\st_{12}'=\st_1'\cup\st_2'$ and each $\st_i$ is the proper transform of $\st_i'$. Notice that the gap-admissible condition ensures $[\omega]\cdot(K_\omega+[\overline{D}])<0$, which also implies  $[\omega']\cdot(K_{\omega'}+[D'])<0$ after toric blowdown by Lemma \ref{lem:blowdownarea}.

 \begin{claim}
$(X',\omega',D')$ can be obtained by successive toric or half-toric blowups from a pair $(X'',\omega'',D'')$ such that one of the connected component of $D''$ is quasi-minimal.
\end{claim}
 \begin{proof}[Proof of claim:] Choose $J'\in\mathcal{J}(D')$ and let $C'$ be an embedded $J'$-holomorphic representative of a minimal symplectic exceptional class in $(X',\omega')$. We may assume
 \[
 [C']\cdot [\st_{12}']\leq 1,\qquad [C']\cdot [\st_3']\leq 1,
 \]
 since otherwise $\st_{12}'$ or $\st_3'$ is already quasi-minimal. If $C'$ is not a component of $D'$, then by Lemma \ref{lem:blowupexceptional}, which excludes exceptional spheres connecting $\st_{12}'$ and $\st_3'$ by our assumption, $C'$ must be disjoint from one of these two connected components. This would imply that $C'$ is either non-toric, contradicting Proposition \ref{prop:nonontoric} and Lemma \ref{lem:blowupexceptional}, or exterior, contradicting the full condition. If instead $C'$ is a component of $D'$, then it is toric or half-toric, so we can blow it down and obtain a new pair $(X'_1,\omega'_1,D'_1)$. The inequality
 \(
 [\omega'_1]\cdot(K_{\omega'_1}+[D'_1])<0
 \)
 still holds after this blowdown. Repeating the same argument, we conclude that either one connected component becomes quasi-minimal at some stage, or we can continue performing toric or half-toric blowdowns until $b_2^-=1$. Notice that each such blowdown can only occur along components in $\st_{12}$, since $\st_3$ is Hirzebruch--Jung, and each step decreases the number of divisor components by $1$. Hence, if $b_2^-$ were reduced to $1$, then necessarily $l(\st_3)=1$, and exactly two components would remain; these must be the dual sections in a Hirzebruch surface. Let $F$ denote the fiber class of that Hirzebruch surface. Note that the final blowdown from $b_2^-=2$ to $b_2^-=1$ must be a half-toric blowdown of an exceptional component in class $e$. It follows that, in the blowup of the Hirzebruch surface, there is an exceptional sphere in class $F-e$ connecting the two connected components of the divisor. This contradicts our assumption by Lemma \ref{lem:blowupexceptional}. Therefore, $b_2^-$ cannot be reduced to $1$, and the claim is proved.
\end{proof}
 With the claim understood, we may now assume $D''=D''_1\cup D''_2$ with $D''_1$ being quasi-minimal with $e''\in\mathcal{E}_{\text{min}}(X'',\omega'')$ such that $e''\cdot [D_1'']=2$.
 \begin{claim}
Given any $J''\in\mathcal{J}(D'')$, the embedded $J''$-holomorphic representative $C_{e''}$ of $e''$ must be disjoint from $D_2''$.
 \end{claim}
\begin{proof}
  Otherwise, by Lemma \ref{lem:blowupexceptional}, the class $e''\in H_2(X'';\mathbb{Z})\subseteq H_2(X;\mathbb{Z})$ would be an exceptional class in either $\mathcal{E}_\omega(\st_1,\st_3;D)$ or $\mathcal{E}_\omega(\st_2,\st_3;D)$. By Proposition \ref{prop:-K-D=Emin},\[[\omega'']\cdot(K_{\omega''}+[D''])=[\omega'']\cdot(K_{\omega''}+[D'']+e''- e'')=[\omega'']\cdot[D_2'']-[\omega'']\cdot e''>-[\omega'']\cdot e''.\]
  However, by Lemma \ref{lem:blowdownarea}, we then have
  \begin{align*}
  [\omega]\cdot(K_{\omega}+[D])&=[\omega']\cdot(K_{\omega'}+[D'])-[\omega]\cdot E_{\text{min}}\\
  &\geq [\omega'']\cdot(K_{\omega''}+[D''])-[\omega]\cdot E_{\text{min}}\\
  &>-[\omega'']\cdot e''-[\omega]\cdot E_{\text{min}}=-[\omega]\cdot e''-[\omega]\cdot E_{\text{min}},
  \end{align*}
  which contradicts the gap-admissible assumption.
\end{proof}
Hence, by adjunction formula, components in $D_2''$ can only be $(-2)$-spheres since $e''+[D_1'']=-K_{\omega''}$. To see the contradiction, let us introduce the quantity $\Xi(\mathcal{C})$ for any symplectic divisor $\mathcal{C}=C_1\cup\cdots \cup C_l$
\[\Xi(\mathcal{C}):=-3l-\sum_{i=1}^l[C_i]^2.\]
By Corollary \ref{cor:3nminus6}, we have $\Xi(D)\geq -6$, hence $\Xi(D')\geq -4$ by direct computation. Moreover, each toric blowdown leaves $\Xi$ unchanged, while each half-toric blowdown increases $\Xi$ by $1$. Therefore, after passing from $D'$ to $D''$ through a sequence of toric or half-toric blowdowns, we obtain $\Xi(D'')\geq -4$. Since $D_1''$ is quasi-minimal, $-K_{\omega''}=e''+[D_1'']$ so that 
\[K_{\omega''}^2=(e''+[D_1''])^2=-1+[D_1'']^2+2e''\cdot[D_1'']=3+[D_1'']^2=3+\sum_{i=1}^{l(D_1'')} [C_i'']^2+2l(D_1'')-2,\]
where $C_i''$ denotes the component in $D_1''$ and $l(D_1'')$ denotes the number of components. Note that the last equality holds true since the configuration of $D_1''$ is a tree with no loop. So,
\[\Xi(D_1'')=-3l(D_1'')-\sum_{i=1}^{l(D_1'')} [C_i'']^2=-3l(D_1'')-\big(K_{\omega''}^2-2l(D_1'')-1\big)=-l(D_1'')-K_{\omega''}^2+1.\]
For $D_2''$, since it only contains $(-2)$-spheres, we see 
\[\Xi(D_2'')=-3l(D_2'')+2l(D_2'')=-l(D_2'').\]
 Observe that $l(D_1'')+l(D_2'')=b_2^-(X'')+1$ since only toric or half-toric blowdowns were performed. Therefore, we compute that 
\begin{align*}
\Xi(D'')&=\Xi(D_1'')+\Xi(D_2'')=-l(D_1'')-l(D_2'')-K_{\omega''}^2+1\\
&=-b_2^-(X'')-K_{\omega''}^2=-b_2^-(X'')-(9-b_2^-(X''))=-9,
\end{align*}
which contradicts $\Xi(D'')\geq -4$.
\end{proof}

In terms of the notion of exceptional gaps, the above proposition particularly implies that $\mathfrak{e}(\st_{12},\st_3;\overline{D})>0$. As a result, we obtain a connected symplectic divisor
\[\overline{\overline{D}}:=\overline{D}\cup C_e,\]
where, after possibly exchanging the index of $\st_1$ and $\st_2$, we may assume that $C_e$ is an exceptional sphere connecting $\st_1$ and $\st_3$. For any symplectic divisor $\mathcal{C}=\bigcup_i C_i$, we call $C_i$ a \emph{branching component} if
\[
[C_i]\cdot([\mathcal{C}]-[C_i])\geq 3.
\]
Our aim is to show that $\overline{\overline{D}}$ cannot have branching component.

\begin{lemma}\label{lem:-1reduction}
  If $\overline{\overline{D}}$ has a branching component $C$ with $[C]\cdot([\overline{\overline{D}}]-[C])= 3$ (resp. $4$), then one (resp. two) of the connected components of $\overline{\overline{D}}\setminus C$ can be reduced to a $(-1)$-sphere by a sequence of toric or half-toric blowdowns.
\end{lemma}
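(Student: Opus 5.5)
The plan is to run the reduction machinery of \cite{LN25} on the connected divisor $\overline{\overline{D}}=\st_1\cup C_{E_{\text{min}}}\cup\st_2\cup C_e\cup\st_3$. We then track what happens to the branching component $C$ under the blowdowns, so that the arms of $C$ are forced to collapse.

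\textbf{Setup.} First note that $\overline{\overline{D}}$ is a tree of spheres with $b_2^-(X)+2=b_2(X)+1$ components. By the gap-admissible assumption,
\[
[\omega]\cdot(K_\omega+[\overline{\overline{D}}])=[\omega]\cdot(K_\omega+[D])+\omega(E_{\text{min}})+\omega(e)<0,
\]
since $\omega(E_{\text{min}})\le\e(\st_1,\st_2;D)$ and $\omega(e)\le \e(\st_1,\st_3;D)$. Hence Proposition \ref{prop:minimalreduction} applies. It reduces $(X,\omega,\overline{\overline{D}})$, by toric, non-toric, half-toric and exterior blowdowns, to a pair that is either quasi-minimal or has $b_2\le 2$.

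\textbf{Step 1: only toric and half-toric blowdowns occur.} I would show that no exterior or non-toric blowdown ever occurs. The argument is a component count, as in Lemma \ref{lem:noexterior} and Proposition \ref{prop:nonontoric}. The divisor starts with $b_2+1$ components. Toric and half-toric blowdowns preserve the difference $l(\cdot)-b_2$. An exterior or non-toric exceptional sphere at any stage would produce a configuration with too many components to sit in the lattice: the divisor together with the sphere would have $b_2+2$ components, which is impossible for the connected tree. Here I would use Lemma \ref{lem:blowupexceptional} to transport such a sphere back to the original pair, and then invoke the earlier lemmas. Thus every blowdown is toric or half-toric, and the relation $l=b_2+1$ persists.

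\textbf{Step 2: the terminal pair has no branching component.} In the quasi-minimal case of the first kind, the divisor is a string by Proposition \ref{prop:twokinds}. In the second kind with $b_2\ge4$, Proposition \ref{prop:twokinds} reduces further to $b_2=2$. When $b_2\le 2$ there are at most $3$ components, so no vertex has valence $\ge3$. There remains the second kind with $b_2=3$, i.e.\ $4$ components. There I would use $E_{\text{min}}=-K-[D]$ with $E_{\text{min}}\cdot[D]=2$, together with the fact that $E_{\text{min}}$ is itself a component, to rule out a star with three leaves by direct computation.

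\textbf{Step 3: the arms of $C$ must collapse.} Fix the branching component $C$ with valence $3$ (resp.\ $4$). A toric or half-toric blowdown inside an arm never merges two arms, and valence at $C$ drops only when the arm component adjacent to $C$ is itself blown down while it is the whole arm. Indeed, if it is blown down while more of the arm remains, that is a toric blowdown, which keeps the arm attached to $C$. Since the valence must end at most $2$, at least one (resp.\ two) arms must shrink to a single sphere meeting $C$ that is then blown down, half-torically, as a $(-1)$-sphere. Blowdowns of $C$ itself or of other arms only change the self-intersection of $C$, so they do not alter the arm's internal self-intersections. Therefore the blowdowns performed inside that arm form a sequence of toric or half-toric blowdowns of the arm alone, ending in a $(-1)$-sphere.

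\textbf{Main obstacle.} I expect the hardest parts to be two points. The first is this bookkeeping: when $C$ itself is blown down toric-ly, the arm becomes attached to a neighbouring sphere. There I would argue that $C$ is never blown down before the branching is resolved, since a $(-1)$-sphere of valence $\ge3$ cannot be toric or half-toric. The second is the small case $b_2=3$ of the second kind in Step 2.
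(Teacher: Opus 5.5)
Your overall route is the paper's: invoke Proposition \ref{prop:minimalreduction}, use Lemma \ref{lem:blowupexceptional} to show only toric and half-toric blowdowns occur, check that the terminal divisor has no branching component, and trace the arms of $C$. Your Step 3 bookkeeping is in fact more careful than the paper's one-sentence version. The genuine problem is the subcase you flagged in Step 2: quasi-minimal of the second kind with $b_2(\check X)=3$. There the computation you propose cannot exclude a star, because the facts you list force one. Since $C_{E_{\min}}$ is a component of $\check D$ and $E_{\min}\cdot[\check D]=2$, it satisfies $E_{\min}\cdot([\check D]-E_{\min})=3$, i.e.\ it has three neighbours. As $\check D$ is a tree with $l(\check D)=b_2(\check X)+1=4$ components, it is exactly a star centred at $C_{E_{\min}}$. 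Such pairs exist: take $\check D=C_{H-e_1-e_2}\cup C_{e_1}\cup C_{e_2}\cup C_H$ in $\CP^2\#2\overline{\CP}^2$ with $\omega(H-e_1-e_2)<\omega(e_1),\omega(e_2)$. So Step 2 as stated is false. (The paper handles this case by quoting Proposition \ref{prop:twokinds}, which is stated only for $b_2\geq 4$, so you were right that something must be said here; but the needed statement is ``reduce further'', not ``exclude''.)

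What is true, and suffices, is that such a star can be reduced once more. Let $L_1,L_2,L_3$ be the leaves and $x,y,z$ their self-intersections. From $-K_{\check\omega}=2E_{\min}+[L_1]+[L_2]+[L_3]$ and $K_{\check\omega}^2=7$ you get $x+y+z=-1$. Four classes in a rank-$3$ lattice have singular Gram matrix, which gives $xyz+xy+yz+zx=0$. Together these yield $(1+x)(1+y)(1+z)=0$, so some leaf is a $(-1)$-sphere, i.e.\ a half-toric exceptional sphere. Blowing it down gives a $3$-component divisor with $b_2=2$, where branching is impossible, and your Step 3 then goes through.

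A smaller point concerns Step 1. The counting remark (a connected tree cannot have $b_2+2$ components) is not valid by itself: a section of $\CP^2\#\overline{\CP}^2$ together with three fibres is a tree with $b_2+2$ components. The actual argument is the transport via Lemma \ref{lem:blowupexceptional}, and there you must note two cases. A sphere non-toric to $\overline{\overline{D}}$ that meets $C_{E_{\min}}$ or $C_e$ is exterior to $D$, contradicting Lemma \ref{lem:noexterior}. One meeting a component of some $\st_i$ is non-toric to $D$, contradicting Proposition \ref{prop:nonontoric}.
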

\begin{proof}
  By the gap-admissible assumption, $\overline{\overline{D}}$ is a connected divisor such that $[\omega]\cdot(K_{\omega}+[\overline{\overline{D}}])<0$. By Proposition \ref{prop:minimalreduction}, it can be reduced to a pair $(\check{X},\check{\omega},\check{D})$ which is either quasi-minimal or $b_2(\check{X})\leq 2$ by successive blowdowns. By Lemma \ref{lem:blowupexceptional}, if a non-toric or exterior blowdown was performed at some step, then $(X,\omega,D)$ will admit some non-toric or exterior exceptional sphere (if it is non-toric to $\overline{\overline{D}}$ which intersects $C_e$ or $C_{E_{\text{min}}}$, then it is exterior to $D$), which will violate either Proposition \ref{prop:nonontoric} or the full condition. Hence, the reduction process only involves toric or half-toric blowdowns. Since $l(\overline{\overline{D}})=2+b_2^-(X)=1+b_2(X)$ and each toric or half-toric blowdowns decreases the number of divisor components by $1$, we have $l(\check{D})=1+b_2(\check{X})$.
  
  Note that if $\check{D}$ has no branching component, then in the process of successive blowdowns starting from $\overline{\overline{D}}$, there must be a step at which a half-toric blowdown is performed at each branching component, so that the resulting divisor $\check{D}$ has no branching component. This implies that one (if $\overline{\overline{D}}\setminus C$ has three connected components) or two (if $\overline{\overline{D}}\setminus C$ has four connected components) of the connected component of $\overline{\overline{D}}\setminus C$ can be contracted to a $(-1)$-sphere. Therefore, it remains to discuss the following cases to show that $\check{D}$ cannot have a branching component.
  \begin{itemize}[nosep]
    \item If $b_2(\check{X})=2$, then $\check{D}$ has no branching component since it can only have three components. 
    \item If $(\check{X},\check{\omega},\check{D})$ is quasi-minimal of the first kind, then it must be a string by Proposition \ref{prop:twokinds}, so no branching component can appear.
    \item If $(\check{X},\check{\omega},\check{D})$ is quasi-minimal of the second kind, then we can further reduce it to a divisor in a manifold with $b_2\leq 2$ by Proposition \ref{prop:twokinds}. By the same reason as above, the reduction can only involve toric or half-toric blowdowns. Thus, we are back in the case of the first item.

  \end{itemize}

\end{proof}

\begin{remark}\label{rmk:-1reduction}
  The argument above yields the following more general statement: if $D\subseteq (X,\omega)$ is a connected symplectic divisor with $[\omega]\cdot (K_\omega+[D])<0$ and $l(D)\leq b_2(X)+1$, and if $D$ admits no non-toric or exterior exceptional spheres, then one of the connected components of the complement of its branching component can be contracted to a $(-1)$-sphere.
\end{remark}

\begin{proposition}\label{prop:isstring}
$\overline{\overline{D}}$ is a string.
\end{proposition}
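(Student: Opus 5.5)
The plan is to locate where a branching component could sit, apply Lemma \ref{lem:-1reduction} to it, and then contradict the resulting contraction using the numerical constraints already in force. By construction
\[
\overline{\overline{D}}=\st_1\cup C_{E_{\min}}\cup\st_2\cup C_e\cup\st_3
\]
is a tree of spheres. Each exceptional sphere meets exactly one component in each of two strings, $C_{E_{\min}}$ joining $\st_1$ to $\st_2$ and $C_e$ joining $\st_1$ to $\st_3$. So every component of $\overline{\overline{D}}$ other than the following has valence at most $2$:
\begin{itemize}[nosep]
\item the component $A\subseteq\st_2$ met by $C_{E_{\min}}$, if $A$ is an interior component of $\st_2$;
\item the component $A\subseteq\st_3$ met by $C_e$, if $A$ is an interior component of $\st_3$;
\item a component $A\subseteq\st_1$ met by $C_{E_{\min}}$ or $C_e$, if $A$ is interior, or if $A$ is met by both.
\end{itemize}
These are the only candidates for branching components, with valence $3$ or $4$. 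Suppose one exists, call it $C$. I would first list the connected components of $\overline{\overline{D}}\setminus C$. Each is either a substring of some $\st_i$ (hence Hirzebruch--Jung), or a branch containing exactly one of $C_{E_{\min}}$, $C_e$ together with what hangs beyond it.

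The first step is to observe that an HJ substring contains no $(-1)$-sphere. Toric or half-toric blowdowns therefore can never start on it, and it cannot be contracted to a $(-1)$-sphere. Lemma \ref{lem:-1reduction} then forces a branch containing an exceptional sphere to contract. If $C$ has valence $4$, both such branches must contract, since only two branches contain an exceptional sphere. Take the representative case $C\subseteq\st_1$ and the branch $\mathcal{B}=C_{E_{\min}}\cup\st_2$. Since only $C_{E_{\min}}$ is a $(-1)$-sphere in $\mathcal{B}$, Lemma \ref{lem:adjacent-1} controls the successive blowdowns. I would use that toric and half-toric blowdowns preserve $\det M$, together with Lemma \ref{lem:-1endpoint}-type determinant bookkeeping. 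This should show that contraction to a single $(-1)$-sphere forces $C_{E_{\min}}$ to meet an endpoint of $\st_2$ and $\st_2$ to be a $(-2)$-string. The reason is that $\det M\bigl((1,b_1,\dots,b_k)\bigr)=p-q$ for $p/q=[b_1,\dots,b_k]$, and this equals $1$ only for a $(-2)$-string. In that case $E':=E_{\min}+[\st_2]$ is an exceptional class meeting $\st_1$ once at $C$ and disjoint from $\st_3$. The cases where the branching component lies in $\st_2$ or $\st_3$ are symmetric: there the contracting branch contains $\st_1$ and the other exceptional sphere, and is handled the same way.

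Second, I would derive the contradiction by blowing down $\mathcal{B}$ entirely, which takes $l(\st_2)+1$ blowdowns, all toric except the last half-toric one at $C$. The result is a pair with divisor $\st_1'\cup C_e\cup\st_3$. Lemma \ref{lem:blowdownarea} keeps the adjoint area negative, and the number of components still exceeds $b_2^-$ by one. I would then rerun the $\Xi$-count from the proof of Proposition \ref{prop:secondexceptional}, using $\Xi(D)\geq-6$ from Corollary \ref{cor:3nminus6} and tracking how the half-toric step changes $\Xi$. Alternatively, I would combine the $(-2)$-string conclusion with Lemma \ref{lem:-2chains} and the $(a,b,c)$-type assumption to pin down $\st_1$, and check that the full condition fails as in Proposition \ref{prop:HJandEmin}. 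If the valence is $4$, the same holds for the $C_e\cup\st_3$ branch, which only strengthens the constraint.

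The main obstacle is this final contradiction. The contraction argument yields a clean structural statement, but it must be turned into a numerical impossibility. I would try the $\Xi$ bookkeeping first, since it is purely combinatorial and already worked for the analogous situation. If that is not sharp enough, I would fall back on Lemma \ref{lem:-2chains} to obtain explicit self-intersection sequences and compare $b_2^-$ with the total length, as in Proposition \ref{prop:HJandEmin}.
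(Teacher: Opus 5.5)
Your structural steps are sound. The list of candidate branching components is right, the appeal to Lemma~\ref{lem:-1reduction} is right, and the observation $\det M(1,b_1,\dots,b_k)=p-q$ correctly shows that a contractible branch $C_{E_{\min}}\cup\st_2$ forces $\st_2$ to be a $(-2)$-string met at an endpoint. The gap is the final contradiction, as you suspect, and neither route you propose produces one.

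The $\Xi$-count cannot see the problem. It depends only on the number of components, their self-intersections and the types of blowdowns. Contracting a given branch changes $\Xi$ by the same amount whichever component of $\st_1$ the branch hangs from. So a branched configuration has exactly the same $\Xi$-bookkeeping as the genuine sub-toric one. For example, $\st_1=(-a,-k,-b)$ with $(-2)$-strings of lengths $a-1,b-1$ gives $\Xi(D)=k-7\ge -6$, which is no contradiction.

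The fallback fails too. That configuration has total length $a+b+1$, exactly as in the minimal resolution of $\CP(a,b,kab-a-b)$, so the full condition holds. In Proposition~\ref{prop:HJandEmin} the contradiction came from the extra relation $-K_\omega=[\st_\alpha]+E_{\min}$, which you do not have here. What is wrong with a branched configuration is \emph{where} the branches attach, and no numerical count of this kind detects that.

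The paper's proof rests on two ingredients missing from your plan.

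The first is the rank bound: a negative definite configuration has at most $b_2^-$ components.
\begin{itemize}
\item It disposes of branching on $\st_2$ or $\st_3$, which is \emph{not} symmetric to the $\st_1$ case. The contracting branch $C_{E_{\min}}\cup\st_1\cup C_e\cup\st_3$ contains two $(-1)$-spheres, so Lemma~\ref{lem:adjacent-1} and your determinant argument do not apply. Instead, after contracting it, the $l(\st_2)-1$ components of $\st_2\setminus C$ remain negative definite in a manifold with $b_2^-=l(\st_2)-2$.
\item Once $C_{E_{\min}}$ and $C_e$ sit at endpoints of $\st_2$ and $\st_3$, suppose both meet the branching component $C\subseteq\st_1$. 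Then $\overline{\overline{D}}\setminus C$ is negative definite by Lemma~\ref{lem:-1endpoint} and has $b_2^-(X)+1$ components. So your valence-$4$ case is an immediate contradiction, rather than something that merely ``strengthens the constraint''.
\item The same count shows that the branch through $C_e\cup\st_3$ cannot be the one that contracts, a case you leave open.
\item It also rules out $k=1$ in Lemma~\ref{lem:-2chains}, since then a branching component in $\st_1$ would have to meet both exceptional spheres.
\end{itemize}

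The second ingredient is the classification of embedded spheres in Hirzebruch surfaces, Claim~\ref{claim:sphereinHirzebruch}. For $k\ge 2$, contract both branches to get a $3$-component string in $S^2\times S^2$ or $\CP^2\#\overline{\CP}^2$. Its disjoint endpoints must have opposite self-intersections. The two contractions raise self-intersections by $a$ and $b$, and the untouched end has self-intersection $-a$ or $-b$. Hence both branches would have to attach to the same end component, which the rank bound already excluded.

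A minor slip: $l(\overline{\overline{D}})=b_2^-(X)+2$, and it is this excess of two, not one, that the contractions preserve.
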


\begin{proof}
  Recall that we have assumed $\st_2,\st_3$ are connected to $\st_1$ by $C_{E_{\text{min}}},C_e$ respectively. Let us first show the branching component cannot appear on $\st_2$ or $\st_3$. Otherwise, the complement of the branching component will have exactly three connected components $\mathcal{C}_1,\mathcal{C}_2,\mathcal{C}_3$, where two of them, say $\mathcal{C}_1,\mathcal{C}_2$, are HJ strings since they are part of $\st_2$ (resp. $\st_3$). Since HJ strings contain no $(-1)$-sphere, we know that the other connected component $\mathcal{C}_3=C_{E_\text{min}}\cup \st_1\cup C_e\cup \st_3$ (resp. $\mathcal{C}_3=C_e\cup \st_1\cup C_{E_\text{min}}\cup \st_2$) can be reduced to a $(-1)$-sphere by successive toric or half-toric blowdowns by Lemma \ref{lem:-1reduction}. Therefore, if we futher perform one half-toric blowdown of that $(-1)$-sphere, we will obtain a string with the same length as $\st_2$ (resp. $\st_3$) in a manifold with \[b_2^-=b_2^-(X)-l(\mathcal{C}_3)=b_2^-(X)-l(\st_1)-l(\st_3)-2=l(\st_2)-2\]
  \[(\text{resp. } b_2^-=b_2^-(X)-l(\mathcal{C}_3)=b_2^-(X)-l(\st_1)-l(\st_2)-2=l(\st_3)-2).\] However, $\mathcal{C}_1\cup \mathcal{C}_2$ will descend and become a negative definite configuration with $l(\st_2)-1$ (resp. $l(\st_3)-1$) components, which is contradiction.

  Next, we show $\st_1$ does not contain branching component neither. Assume the there was a branching component $C\subseteq \st_1$, then we claim the following.
  \begin{claim}
$\st_2,\st_3$ must be $(-2)$-strings.
\end{claim}

\begin{proof}
  Suppose first that $C$ is attached to both $\st_2$ and $\st_3$ through $C_{E_{\text{min}}}$ and $C_e$ respectively. Then $\overline{\overline{D}}\setminus C$ has either $3$ or $4$ connected components, depending on whether $C$ is an endpoint of $\st_1$ or not. Hence $\overline{\overline{D}}\setminus C$ is negative definite: one or two components are HJ strings, and the remaining two are the strings $C_{E_{\text{min}}}\cup \st_2$ and $C_e\cup \st_3$, which are also negative definite by Lemma \ref{lem:-1endpoint}. This is a contradiction since $\overline{\overline{D}}\setminus C$ has $b_2^-(X)+1$ components. 
  
  Now suppose instead that only $C_{E_{\text{min}}}\cup \st_2$ (resp. $C_e\cup\st_3$) is attached to $C$. Write the three connected components of $\overline{\overline{D}}\setminus C$ as $\mathcal{C}_1,\mathcal{C}_2,\mathcal{C}_3$, where $\mathcal{C}_1\subseteq \st_1$, $\mathcal{C}_2$ contains $\st_3$ (resp. $\st_2$), and $\mathcal{C}_3=C_{E_{\text{min}}}\cup \st_2$ (resp. $\mathcal{C}_3=C_e\cup \st_3$). Since $\mathcal{C}_1$ is an HJ string, it cannot be contracted to a $(-1)$-sphere. The same is true for $\mathcal{C}_2$: otherwise, after contracting $\mathcal{C}_2$, the image of $\mathcal{C}_1\cup\mathcal{C}_3$ would be a negative definite configuration with components more than $b_2^-$ of the blowdown manifold, by the same argument as above. Therefore, Lemma \ref{lem:-1reduction} implies that $\mathcal{C}_3$ can be contracted to a $(-1)$-sphere, and thus $\st_2$ (resp. $\st_3$) must be a $(-2)$-string.
\end{proof}
The above claim allows us to apply Lemma \ref{lem:-2chains}. The case $k=1$ in Lemma \ref{lem:-2chains} cannot occur: if a branching component existed in that case, then $C_{E_{\text{min}}}\cup \st_2$ and $C_e\cup \st_3$ would have to attach to the same component of $\st_1$, and deleting that branching component from $\overline{\overline{D}}$ would yield a negative definite configuration with $b_2^-(X)+1$ components. Therefore $k\geq 2$, and $\st_1$ is a three-component string with self-intersection sequence $(-a,-k,-b)$. Since $\st_2$ and $\st_3$ contain no branching components, any branching component must lie in $\st_1$. For the same reason, $C_{E_{\text{min}}}\cup \st_2$ and $C_e\cup \st_3$ cannot be attached to the same component of $\st_1$.
After contracting both $C_{E_{\text{min}}}\cup \st_2$ and $C_e\cup \st_3$, $\st_1$ will descend to a three-component string $\st_1'$ in a Hirzebruch surface as $b_2$ is always two less than the number of divisor components.
Moreover, one endpoint of the string $\st_1'$, whose proper transform is the component in $\st_1$ where neither $C_{E_{\text{min}}}$ nor $C_e$ is attached, will have self-intersection $-a$ or $-b$.

However, such a configuration $\st_1'$ cannot exist in a Hirzebruch surface by elementary homological considerations. We use the standard bases $F,B\in H_2(S^2\times S^2;\mathbb{Z})$, where $F$ and $B$ denote the classes of the two $S^2$-factors, and $H,E\in H_2(\CP^2\#\overline{\CP}^2;\mathbb{Z})$, where $H$ and $E$ denote the line class and the exceptional class, respectively. The following claim is well-known, due to the uniqueness result of symplectic structures on ruled surfaces (\cite{lalondemcduff}).
\begin{claim}\label{claim:sphereinHirzebruch}
The classes of embedded symplectic spheres in $S^2\times S^2$ (resp. $\CP^2\#\overline{\CP}^2$) are precisely those satisfying the adjunction equality for spheres, which are of the form $F+sB$ or $B+tF$ for some $s,t\in\mathbb{Z}$ (resp. $(r+1)H-rE$ for some $r\in\mathbb{Z}$), and has positive symplectic area.
\end{claim}
Therefore, if $\st_1'$ has an endpoint with self-intersection $-a$ (resp. $-b$), then its other endpoint must have self-intersection $a$ (resp. $b$).
Since $l(C_{E_{\text{min}}}\cup \st_2)+l(C_e\cup \st_3)=a+b$, it follows that both $C_{E_{\text{min}}}\cup \st_2$ and $C_e\cup \st_3$ must attach to the $(-b)$ (resp. $(-a)$) component of $\st_1$, contradicting the conclusion above.
\end{proof}

\subsection{Completing the string into a cycle}\label{section:complete}
To finish the proof of Theorem \ref{thm:main}, it remains to construct a symplectic sphere in class $-K_\omega-[\overline{\overline{D}}]$ which completes the string $\overline{\overline{D}}$ into the boundary divisor of some toric moment map.

\begin{proposition}\label{prop:cycle}
  $-K_\omega-[\overline{\overline{D}}]$ can be represented by an embedded symplectic sphere $C$ such that $C\cup \overline{\overline{D}}$ is a toric symplectic log Calabi--Yau divisor.
\end{proposition}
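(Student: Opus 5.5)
Write $\overline{\overline{D}}=\bigcup_{i=1}^m C_i$ for the string of Proposition \ref{prop:isstring}, with $m=l(\st_1)+l(\st_2)+l(\st_3)+2=b_2^-(X)+2=b_2(X)+1$. Set $A:=-K_\omega-[\overline{\overline{D}}]$. The plan is to show that $A$ is a $\overline{\overline{D}}$-good class with $A^2+K_\omega\cdot A=-2$, invoke Theorem \ref{thm:MO15}, and then recognize the resulting cycle as a toric log Calabi--Yau divisor by counting components.

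\textbf{Step 1: numerics.} Adjunction for each sphere $C_i$ gives $-K_\omega\cdot[C_i]=[C_i]^2+2$, so for a linear chain
\[
A\cdot[C_i]=[C_i]^2+2-[C_i]^2-(\text{number of neighbours of }C_i)\in\{0,1\}.
\]
It equals $1$ exactly at the two endpoints, or $2$ at the single component if $m=1$, which cannot happen here. Next, $-K_\omega\cdot[\overline{\overline{D}}]=[\overline{\overline{D}}]^2+2$, since a string of spheres has Euler characteristic $2$. Hence
\[
A^2=K_\omega^2-[\overline{\overline{D}}]^2-4=(9-b_2^-(X))+K_\omega\cdot[\overline{\overline{D}}]-2,
\]
and so $A^2+K_\omega\cdot A=-2$ reduces to an identity in $K_\omega^2$. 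To get the value of $A^2$ itself, I would use the $\Xi$-bookkeeping from the proof of Proposition \ref{prop:secondexceptional}, or equivalently track $\sum[C_i]^2$ through the blowdowns. The expectation is $A^2=8-2\cdot0-\ldots$, i.e.\ consistent with the toric relation $\sum(\text{self-intersections})=12-3\,l$ over the full cycle.

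\textbf{Step 2: goodness.} The area condition $[\omega]\cdot A>0$ follows from gap-admissibility. Indeed $[\omega]\cdot(K_\omega+[D])<-\omega(E_{\min})-\omega(e)$, so $[\omega]\cdot(K_\omega+[\overline{\overline{D}}])<0$. Nonvanishing of $SW(A)$ then follows from \cite[Corollary 2.8]{LN25}, because $[\omega]\cdot(K_\omega-A)=[\omega]\cdot(2K_\omega+[\overline{\overline{D}}])<0$ (using $[\omega]\cdot K_\omega<0$ for a rational manifold); if the index is an issue I would use wall-crossing with $b_2^+=1$. Condition (4) of Definition \ref{def:Dgood} is Step 1.

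\textbf{The main obstacle is condition (3):} showing $A\cdot E\ge0$ for every $E\in\mathcal{E}_\omega\setminus\{A\}$. Using $A\cdot E=1-E\cdot[\overline{\overline{D}}]$, it suffices to show that any exceptional $E$ with $E\cdot[\overline{\overline{D}}]\ge2$ equals $A$. To do this, take a $J\in\mathcal{J}(\overline{\overline{D}})$ representative of $E$ and split into two cases.
\begin{itemize}[nosep]
\item If $E$ is a component of $\overline{\overline{D}}$ (so $E=E_{\min}$ or $e$), then $E\cdot[\overline{\overline{D}}]=-1+2=1$. This case is harmless.
\item Otherwise $E\cdot[C_i]\ge0$ for all $i$. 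In this case, replace $\overline{\overline{D}}$ by the connected divisor $\overline{\overline{D}}\cup C_E$ and argue as in Proposition \ref{prop:-K-D=Emin}/Proposition \ref{prop:twokinds}: reduce by toric and half-toric blowdowns, which are the only ones allowed by Proposition \ref{prop:nonontoric} and Lemma \ref{lem:noexterior}. The aim is to conclude that $E\cdot[\overline{\overline{D}}]=2$ forces $E=-K_\omega-[\overline{\overline{D}}]$.
\end{itemize}
Alternatively, I would first reduce $\overline{\overline{D}}$ to a quasi-minimal pair of first kind via Proposition \ref{prop:minimalreduction}. There Proposition \ref{prop:twokinds} already provides $C_{E_{\min}}$ completing the string to a cycle, and I would then transport that curve back using Lemma \ref{lem:blowupexceptional} and its analogue for toric/half-toric blowups. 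This second route may in fact be cleaner: it produces the completing sphere directly, possibly with nonnegative self-intersection if $b_2$ drops to $2$, in which case Claim \ref{claim:sphereinHirzebruch} supplies it.

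\textbf{Step 3: conclusion.} Theorem \ref{thm:MO15}, together with a perturbation to make the intersections $\omega$-orthogonal, yields an embedded sphere $C$ with $[C]=A$ meeting exactly the two endpoints of $\overline{\overline{D}}$ once each. Then $C\cup\overline{\overline{D}}$ is a cycle representing $-K_\omega$, i.e.\ a log Calabi--Yau divisor. Its length is $m+1=b_2(X)+2$, the maximal value, so it is toric log Calabi--Yau. Concretely, Theorem \ref{thm:LCYtorelli}, compared with the Delzant polygon having the same self-intersection sequence, gives a toric structure with $D\subseteq\mu^{-1}(\partial\mathcal{P})$.
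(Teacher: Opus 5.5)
\textbf{There is a genuine gap}, and it sits exactly where you placed the main obstacle: condition (3) of $\overline{\overline{D}}$-goodness. Neither of your routes establishes it.

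\emph{Route 1.} The case split is not exhaustive. For $J\in\mathcal{J}(\overline{\overline{D}})$, an exceptional class need not have an embedded representative. It can be represented by a reducible curve that contains components of $\overline{\overline{D}}$; for example, $E=e_1$ pairs to $-1$ with a component of class $e_1-e_2$. So ``$E$ is a component, or $E\cdot[C_i]\ge 0$ for all $i$'' does not cover all cases. Moreover, Proposition \ref{prop:-K-D=Emin} concerns exceptional classes of \emph{minimal} area (it rests on the curve cone theorem). It says nothing about an arbitrary $E$ with $E\cdot[\overline{\overline{D}}]\ge 2$, and it does not exclude $E\cdot[\overline{\overline{D}}]\ge 3$.

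\emph{Route 2.} This is the right strategy and is what the paper does, but the transport-back step is the heart of the matter, and Lemma \ref{lem:blowupexceptional} does not supply it. That lemma keeps the class fixed and applies only to exceptional spheres; in the $b_2(\check X)=2$ case, $\check C$ is not even exceptional. Under each half-toric blowup with class $E_i$, the completing class changes from $-K-[\check D]$ to $-K-[\check D]-E_i$: the canonical class gains $E_i$ while the total divisor class is unchanged. So the completing sphere would have to pass through the blowup point, which requires a ball of the prescribed size $\omega(E_i)$ relative to both $\check D$ and $\check C$, and you have no control over that. The paper avoids this in three steps:
\begin{itemize}
\item It blows up with sufficiently small balls relative to $\check C$. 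This gives an embedded sphere in class $A=-K_\omega-[\overline{\overline{D}}]$, but only for a deformation-equivalent form $\tilde\omega$.
\item It invokes \cite[Theorem 1.3]{DLW} to obtain an embedded $\omega$-symplectic sphere in class $A$.
\item Goodness then follows from \cite[Lemma 2.13]{LN25}, applied to this embedded sphere together with $A^2\ge -1$, primitivity, and nonnegative pairing with the components of $\overline{\overline{D}}$. Only after that does Theorem \ref{thm:MO15} produce the representative forming a divisor with $\overline{\overline{D}}$.
\end{itemize}

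\textbf{The bound $A^2\ge -1$ is a second missing piece.} Since $I_{SW}(A)=A^2-K_\omega\cdot A=2A^2+2$, a negative value of $A^2+1$ would force $SW(A)=0$, and no wall-crossing argument can rescue that. Your Step 1 leaves $A^2$ unfinished, but the tool you point to does the job:
\[
A^2=5-3b_2^-(X)-\sum_{C_i\subseteq D}[C_i]^2=5+\Xi(D)\ge -1
\]
by Corollary \ref{cor:3nminus6}, which uses fullness and the $(a,b,c)$-type hypothesis. Primitivity, needed for condition (2) when $A^2=0$, follows from $A\cdot[C_1]=1$. Also, $[\omega]\cdot K_\omega<0$ is not automatic for rational manifolds; here it follows from $[\omega]\cdot(K_\omega+[\overline{\overline{D}}])<0$. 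With these repairs, and with the small-ball blowup, deformation and stability argument in place of Lemma \ref{lem:blowupexceptional}, your Steps 1--3 become the paper's proof.
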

\begin{proof}
  By the gap-admissible condition, $\overline{\overline{D}}\subseteq (X,\omega)$ is a connected symplectic divisor satisfying $[\omega]\cdot(K_\omega+[\overline{\overline{D}}])<0$. As in the proof of Lemma \ref{lem:-1reduction}, we can perform successive toric or half-toric blowdowns to obtain a pair $(\check{X},\check{\omega},\check{D})$ that is either quasi-minimal or satisfies $b_2(\check{X})=2$, by Proposition \ref{prop:minimalreduction}. Let us first show the following.
  \begin{claim}
   $-K_{\check{\omega}}-[\check{D}]$ can be represented by an embedded $\check{\omega}$-symplectic sphere $\check{C}$.
  \end{claim}
  \begin{proof}
     If $(\check{X},\check{\omega},\check{D})$ is quasi-minimal, then Proposition \ref{prop:-K-D=Emin} implies that $-K_{\check{\omega}}-[\check{D}]$ is a $\check{\omega}$-symplectic exceptional class of minimal area, and therefore it admits an embedded symplectic representative.

     It remains to consider the case $b_2(\check{X})=2$.  As $\check{D}$ is a chain of spheres, we compute
     \[
     (-K_{\check{\omega}}-[\check{D}])^2+K_{\check{\omega}}\cdot(-K_{\check{\omega}}-[\check{D}])
     =[\check{D}]^2+K_{\check{\omega}}\cdot[\check{D}]=-2.
     \]
     Hence $-K_{\check{\omega}}-[\check{D}]$ satisfies the adjunction equality for spheres. By Lemma \ref{lem:blowdownarea}, this class also has positive $\check{\omega}$-area, because $-K_\omega-[\overline{\overline{D}}]$ does. Therefore, Claim \ref{claim:sphereinHirzebruch} yields an embedded symplectic sphere representing $-K_{\check{\omega}}-[\check{D}]$.
  \end{proof}
Observe that
\[
  -K_\omega-[\overline{\overline{D}}]
  =-K_{\check{\omega}}-[\check{D}]-\sum E_i,
\]
where the $E_i$ are the exceptional classes corresponding to the half-toric blowdowns in the reduction process. By choosing each Darboux ball for the blowups sufficiently small and isotoping it to be relative to $\check{C}$, one obtains an embedded $\tilde{\omega}$-symplectic sphere $\tilde{C}$ representing $-K_\omega-[\overline{\overline{D}}]$, for some symplectic form $\tilde{\omega}$ deformation equivalent to $\omega$. Then, by \cite[Theorem 1.3]{DLW}, the class $-K_\omega-[\overline{\overline{D}}]$ also admits an embedded $\omega$-symplectic sphere smoothly isotopic to $\tilde{C}$. Also, notice that by Corollary \ref{cor:3nminus6}
\begin{align*}
(-K_\omega-[\overline{\overline{D}}])^2&=K_{\omega}^2+2K_\omega\cdot [\overline{\overline{D}}]+[\overline{\overline{D}}]^2=K_{\omega}^2+K_\omega\cdot [\overline{\overline{D}}]-2\\
&=9-b_2^-(X)+\sum_{C_i\subseteq \overline{\overline{D}}} (-2-[C_i]^2)-2\\
&=5-3b_2^-(X)-\sum_{C_i\subseteq D} [C_i]^2\geq -1.
\end{align*}
Moreover, the class $-K_\omega-[\overline{\overline{D}}]$ is primitive, since it has intersection number $1$ with the two endpoints of the string $\overline{\overline{D}}$. Its intersection number with every other component of $\overline{\overline{D}}$ is $0$.
Therefore, by \cite[Lemma 2.13]{LN25}, $-K_\omega-[\overline{\overline{D}}]$ is $\overline{\overline{D}}$-good and we can apply Theorem \ref{thm:MO15} to represent $-K_\omega-[\overline{\overline{D}}]$ by $C$ such that $C\cup \overline{\overline{D}}$ is a symplectic divisor. Since it represents the class $-K_\omega$ and has $b_2^-(X)+3$ components, it must be a toric symplectic log Calabi--Yau divisor.
\end{proof}

\begin{proof}[Proof of `if' part of Theorem \ref{thm:main}]
  By \cite[Theorem 1.5]{LMN22}, every toric symplectic log Calabi--Yau divisor can be realized as the boundary divisor of the moment map for a toric structure. Proposition \ref{prop:cycle} therefore implies that $D$ is sub-toric in the sense of Definition \ref{def:3assumptions}.
\end{proof}

With our main Theorem \ref{thm:main} established, we can now extend the Torelli theorem for symplectic log Calabi--Yau divisors to those arising from minimal resolutions of weighted projective planes.

\begin{proof}[Proof of Corollary \ref{cor:torelli}]
First, by \cite[Theorem 2.8]{LiLiAsian}, the integral isometry $\gamma$ on second (co)homology is realized by a diffeomorphism between $X$ and $X'$, since we assume $[\omega']\cdot K_{\omega'}<0$ and the sub-toric assumption on $D$ also implies $[\omega]\cdot K_\omega<0$. Moreover, cohomologous symplectic forms on a $4$-manifold have the same first Chern class by \cite[Corollary A]{Salamon}, so $\gamma$ must also identify $K_\omega$ with $K_{\omega'}$. Then \cite[Theorem A]{LiLiu96} implies that $\gamma$ identifies the sets of symplectic exceptional classes $\mathcal{E}_\omega$ and $\mathcal{E}_{\omega'}$. By definition, it therefore also identifies the sets of connecting log exceptional classes $\mathcal{E}_\omega^{\text{log}}(\st_i,\st_j;D)$ and $\mathcal{E}_{\omega'}^{\text{log}}(\st_i',\st_j';D')$. Since Theorem \ref{thm:main} shows that the sub-toric divisor $D$ is gap-admissible, the same holds for $D'$ by these identifications on the (co)homological level. Applying Theorem \ref{thm:main} once more, we conclude that $D'$ is also sub-toric, proving the first item of Corollary \ref{cor:torelli}.

Now assume further that $\gamma$ preserves the homology classes of all components of $D$ and $D'$. By Theorem \ref{thm:main}, both $D$ and $D'$ can be completed to toric symplectic log Calabi--Yau divisors by adding three components. Proposition \ref{prop:onlyif} shows that two of these added components belong to the set of connecting log exceptional classes, and these classes are uniquely determined by the total homology classes of the strings. Hence $\gamma$ preserves the classes of $l(D)+2=l(D')+2$ components of the toric log Calabi--Yau divisors. Since $\gamma$ also sends $K_\omega$ to $K_{\omega'}$, it preserves the class of the remaining component as well. We may therefore apply Theorem \ref{thm:LCYtorelli} to obtain a symplectomorphism between the corresponding log Calabi--Yau pairs. Restricting this symplectomorphism to $D$ and $D'$, we conclude that $(X,\omega,D)$ and $(X',\omega',D')$ are symplectomorphic. This proves the second item.

\end{proof}

\begin{remark}
  The intuition behind this argument is that certain symplectic divisors that arise as part of symplectic log Calabi--Yau divisors always exhibit this rigidity property. We expect that such a Torelli-type result also holds for more general divisors with symplectic log Kodaira dimension $-\infty$.
\end{remark}

\section{Symplectic affine rulings on weighted projective planes}\label{section:ruling}

In this section, we prove Theorem \ref{thm:ruling} by first constructing symplectic affine rulings on minimal resolutions of weighted projective planes and then showing how to recover the associated sub-toric configurations from symplectic affine rulings.

\subsection{Constructing affine rulings}
 We first consider the sub-toric configuration $D=\st_a\cup\st_b\cup\st_c\subseteq (X,\omega)$ arising from the symplectic minimal resolution of $\CP(a,b,c)$, where $a<b<c$ and $\st_a$, $\st_b$, and $\st_c$ denote the resolution strings at the orbifold points of orders $a$, $b$, and $c$, respectively. By Lemma \ref{lem:threeedges}, there exist exceptional spheres $N_a$ and $N_b$ connecting $\st_b$ to $\st_c$ and $\st_a$ to $\st_c$, respectively, as well as an embedded symplectic sphere $N_c$ connecting $\st_a$ to $\st_b$ with $[N_c]^2\geq -1$, such that $D\cup N_a\cup N_b\cup N_c$ is the toric boundary divisor of a moment map.
Consider the strings (with orientations indicated by labelings)
\begin{align*}
\st_{ac}:=\st_a\cup N_b\cup\st_c=A_1\cup\cdots\cup A_{k_a}\cup N_b\cup C_1\cup\cdots\cup C_{k_c},\\
\st_{bc}:=\st_b\cup N_a\cup\st_c=B_1\cup\cdots\cup B_{k_b}\cup N_a\cup C_{k_c}\cup\cdots\cup C_{1}
\end{align*} which are not negative definite by the full condition. Given any $\nu\in\{1,2,\cdots,k_c\}$, define their truncations by
\begin{align*}
\st_{ac}^{\leq\nu}:=\st_a\cup N_b\cup\st_c=A_1\cup\cdots\cup A_{k_a}\cup N_b\cup C_1\cup\cdots\cup C_{\nu},\\
\st_{bc}^{\geq\nu}:=\st_b\cup N_a\cup\st_c=B_1\cup\cdots\cup B_{k_b}\cup N_a\cup C_{k_c}\cup\cdots\cup C_{\nu},
\end{align*}and further define $\nu_a,\nu_b\in\{1,2,\cdots,k_c\}$ by
\begin{align*}
\nu_a:=\inf\{\nu\,|\,\st_{ac}^{\leq\nu} \text{ is not negative definite}\},\\
\nu_b:=\sup\{\nu\,|\,\st_{bc}^{\geq\nu} \text{ is not negative definite}\}.
\end{align*}
By the discussion in Section \ref{section:resfiberclass}, there exist two non-zero classes $F_a$ and $F_b$, obtained as positive integral linear combinations of the classes in $\st_{ac}^{\leq\nu_a}$ and $\st_{bc}^{\geq\nu_b}$, respectively, such that for any component $S$ in $D\cup N_a\cup N_b\cup N_c$
\[ F_a\cdot [S]=\begin{cases}
  p_a \geq 0,\qquad & S=C_{\nu_a}\\
  q_a >0 ,\qquad & S=C_{\nu_a+1}\\
  1,\qquad & S=N_c\\
  0,\qquad & \text{otherwise,}\end{cases}\qquad 
  F_b\cdot [S]=\begin{cases}
  p_b \geq 0,\qquad & S=C_{\nu_b}\\
  q_b > 0 ,\qquad & S=C_{\nu_b-1}\\
  1,\qquad & S=N_c\\
  0,\qquad & \text{otherwise,}
\end{cases}\]
where $(p_a,q_a)$ and $(p_b,q_b)$ are two pairs of coprime integers. Also, we have \[F_a^2=p_aq_a\geq 0,\quad F_b^2=p_bq_b\geq 0, \quad K_\omega\cdot F_a=-p_a-q_a-1,\quad K_\omega\cdot F_b=-p_b-q_b-1.\]


\begin{proposition}\label{prop:affineruling}
Under the above assumptions, the classes $F_a$ and $F_b$ coincide and one of the following holds.
\begin{enumerate}[nosep]
\item If $[N_c]^2\geq 0$, then $\nu_b-\nu_a=2$, and the class $F_a=F_b$ is represented by an embedded symplectic sphere of self-intersection $0$ that meets the unique component $C_{\nu_a+1}=C_{\nu_b-1}$ of $\st_c$ transversely in a single point and is disjoint from $\st_a\cup\st_b$.
\item If $[N_c]^2=-1$, then $\nu_b-\nu_a=1$, and the class $F_a=F_b$ is represented by a symplectic unicuspidal rational curve of self-intersection $p_aq_a=p_bq_b$ with a unique cusp singularity of type $(p_a,q_a)=(q_b,p_b)$ at the point $C_{\nu_a}\cap C_{\nu_b}$; away from this cusp, the curve is disjoint from $D$.
\end{enumerate}
\end{proposition}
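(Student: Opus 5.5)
The plan has three steps: identify $F_a=F_b$ as classes, read off the position of the $C$-components with the two indices, and then produce the required representative by resolving the cusp and using McDuff--Opshtein.

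\textbf{Step 1: $F_a=F_b$.} Both classes are nonzero, pair to $0$ with all toric boundary components except those listed, and pair to $1$ with $N_c$. In particular $F_a\cdot[N_a]=F_a\cdot[N_b]=0$ and similarly for $F_b$. The classes of the toric boundary components $D\cup N_a\cup N_b\cup N_c$ span $H_2(X;\mathbb{Q})$. So the difference $F_a-F_b$ is determined by its pairings with the $C$-components near $\nu_a,\nu_b$.

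I would show directly that $F_a$ is proportional to $F_b$ via the light cone lemma. Both satisfy $F^2\ge 0$, $F\cdot[N_c]=1>0$, and $[\omega]\cdot F>0$, so they lie in the closure of the same positive cone. One then computes $F_a\cdot F_b$ by expanding $F_b$ as a combination of components of $\st_{bc}^{\ge\nu_b}$ and pairing with $F_a$. Only components $C_j$ with $j\in\{\nu_a,\nu_a+1\}\cap[\nu_b,k_c]$ contribute. If $\nu_b\ge\nu_a+2$ this gives $F_a\cdot F_b=0$. The light cone lemma then forces $F_a^2=F_b^2=0$ and $F_a=\lambda F_b$, and $\lambda=1$ by the pairing with $N_c$.

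To pin down $\nu_b-\nu_a$, I would use fullness. Both $\st_{ac}^{\le\nu_a-1}$ and $\st_{bc}^{\ge\nu_b+1}$ are negative definite, and together they are disjoint with $b_2^-(X)-(k_c-\nu_b)-\nu_a+\dots$ components. A rank count against $b_2^-=n$, combined with the observation that the span of $\st_a\cup N_b\cup\{C_j\}_{j<\nu_a}$ and $\st_b\cup N_a\cup\{C_j\}_{j>\nu_b}$ must be negative definite unless they overlap, forces $\nu_b-\nu_a\le 2$.

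When $[N_c]^2\ge0$, I would write $-K_\omega$ as the sum of all boundary classes. Then $K_\omega\cdot F=-p-q-1$ together with adjunction-type bookkeeping gives $p_a=p_b=0$, $q=1$, $F^2=0$, and hence $\nu_b-\nu_a=2$ with $F$ meeting only $C_{\nu_a+1}$. When $[N_c]^2=-1$, there is one fewer degree of freedom and the indices become adjacent. In that case $F_a\cdot F_b$ is computed via the pairings $p_a,q_a$ at $C_{\nu_a},C_{\nu_b}$, and the identity $F_a=F_b$ follows from equality of all pairings with a rational basis. It also yields $(p_a,q_a)=(q_b,p_b)$. I expect this case split to be the bookkeeping-heavy part.

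\textbf{Step 2: case (1).} Here $F$ is a fiber-type class with $F^2=0$ and $K\cdot F=-2$. I would check that $F$ is $D$-good. Seiberg--Witten nonvanishing follows from \cite[Corollary 2.8]{LN25}, since $[\omega]\cdot(K_\omega-F)<0$ holds because $[\omega]\cdot K_\omega<0$. Pairing with exceptional classes is nonnegative because $F$ is a positive combination of toric boundary components and $N_a,N_b$, the only $(-1)$-classes there, pair to $0$ with $F$. Theorem \ref{thm:MO15} then gives an embedded sphere, and perturbation makes the intersection with $C_{\nu_a+1}$ transverse and $\omega$-orthogonal.

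\textbf{Step 3: case (2), the main obstacle.} I would perform the $L$ toric blowups at $C_{\nu_a}\cap C_{\nu_b}$ prescribed by the weight sequence $\mathcal{W}(p_a,q_a)$, with small sizes. This produces the resolution class $\tilde F=F-\sum m_ie_i$, which by the identities $pq=\sum m_i^2$ and $p+q=\sum m_i+1$ satisfies $\tilde F^2=0$ and $K\cdot\tilde F=-2$. Lemma \ref{lem:resolutionfiberclass} then gives an embedded sphere meeting only $C_L$ once transversely. Its hypotheses are that $C_L$ is the unique $(-1)$-component of the blown-up string and that $[\tilde\omega]\cdot(K-\tilde F)<0$. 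For the former one notes that $N_a,N_b$ pair to $0$ with $\tilde F$, and one may work with the string $\st_{ac}^{\le\nu_a}$ only.

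Finally I would blow down the chain $\tilde C_1,\dots,C_L$ in reverse order. Because the sphere meets only $C_L$, transversally in one point, the image is a rational curve with a single $(p_a,q_a)$ cusp at $C_{\nu_a}\cap C_{\nu_b}$, with the stated contact orders, and it is disjoint from the rest of $D$. Symplectic blowdown with relative Darboux balls (as in \cite{McduffSiegel}) makes this rigorous. The delicate point is ensuring the blowdown recovers the original $\omega$ rather than a deformation; I would handle this by choosing the blowup sizes consistently and invoking the inflation and isotopy results already used in Proposition \ref{prop:cycle}.
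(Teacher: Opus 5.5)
The genuine gap is the dichotomy itself: nothing in your proposal connects $[N_c]^2$ to $\nu_b-\nu_a$.

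The identity $K_\omega\cdot F=-p-q-1$ holds in every case. It is just $-K_\omega=[D]+[N_a]+[N_b]+[N_c]$ paired with the intersection table of $F$. So no ``adjunction-type bookkeeping'' with it can force $p_a=0$ when $[N_c]^2\ge 0$, and ``one fewer degree of freedom'' is not an argument for adjacency when $[N_c]^2=-1$.

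The missing idea (Steps 4 and 5 of the paper) is to contract the chains carrying $F_a$ and $F_b$:
\begin{itemize}[nosep]
\item The string $\st_{ac}^{\le\nu_a}$ is not negative definite and its only $(-1)$-sphere is $N_b$. By Lemmas \ref{lem:adjacent-1} and \ref{lem:-1endpoint} it can be reduced by toric blowdowns until two adjacent $(-1)$-spheres appear.
\item The light cone lemma applied to their sum $U$ (compared with $[N_c]$ and with $F_b$) shows that nothing else remains and determines which neighbours they meet. The same holds for $\st_{bc}^{\ge\nu_b}$.
\item If $\nu_b-\nu_a=1$ and $[N_c]^2\ge 0$, this produces a toric cycle $(-1,-1,[N_c]^2,-1,-1)$. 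It blows down to the impossible cycle $(0,[N_c]^2+2,0)$ on $\CP^2$.
\item If $\nu_b-\nu_a=2$ and $[N_c]^2=-1$, it produces a cycle $(-1,-1,[N_c]^2,-1,-1,[C_{\nu_a+1}]^2)$ in $\CP^2\#3\overline{\CP}^2$. There $K^2=6$ forces $[N_c]^2+[C_{\nu_a+1}]^2=-2$, contradicting $[C_{\nu_a+1}]^2\le -2$.
\end{itemize}
Without this, or an equivalent argument on the fan, you get at best $F_a=F_b$. You do not get which of the two alternatives occurs, and that is the actual content of the proposition.

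Two smaller points.

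First, your fullness count points the wrong way. If $\nu_b\le\nu_a$, deleting $C_{\nu_a}$ from $D\cup N_a\cup N_b$ leaves the disjoint negative definite strings $\st_{ac}^{\le\nu_a-1}$ and $\st_{bc}^{\ge\nu_a+1}$, with $b_2^-(X)+1$ components in total. This gives the \emph{lower} bound $\nu_b-\nu_a\ge 1$, which you never state. The upper bound comes from your light cone computation: $\nu_b-\nu_a\ge 3$ would give $F_a=F_b$, while $F_a\cdot[C_{\nu_b-1}]=0<q_b=F_b\cdot[C_{\nu_b-1}]$.

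Second, in the adjacent case, deriving $F_a=F_b$ from ``equality of all pairings with a rational basis'' is circular as stated. Equality of the pairings with $C_{\nu_a}$ and $C_{\nu_b}$ is exactly $(p_a,q_a)=(q_b,p_b)$. The step becomes correct if you add that the $b_2(X)$ boundary classes other than the two adjacent ones $C_{\nu_a},C_{\nu_a+1}$ form a $\Z$-basis of $H_2(X;\Z)$, because the normals of adjacent Delzant edges form a lattice basis. Then $F_a-F_b$, being orthogonal to all of them, is zero. This is a shorter route than the paper's Step 3, which blows up the node along $\mathcal{W}(p_a,q_a)$ and reruns the $\nu_b-\nu_a=2$ argument on the resolved classes via Lemma \ref{lem:fibercomplement}.

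Your construction of the representatives agrees with the paper's Step 6: McDuff--Opshtein for the fiber class, then resolving and blowing down the cusp via \cite[Proposition 4.3.1]{McduffSiegel}.
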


\begin{proof}
{\bf Step 1:} $\nu_b-\nu_a\in\{1,2\}$.

When $\nu_b-\nu_a\geq 2$, the supports of the linear combinations defining $F_a$ and $F_b$ are disjoint, and hence $F_a\cdot F_b=0$. By the light cone lemma, it follows that $F_a$ and $F_b$ are proportional. However, if $\nu_b-\nu_a\geq 3$, this is impossible since $F_a\cdot [C_{\nu_b-2}]=0$ while $ F_b\cdot [C_{\nu_b-2}]>0$. Thus, $\nu_b-\nu_a\leq 2$.

If $\nu_b-\nu_a\leq 0$, then the configuration $(D\cup N_a\cup N_b)\setminus C_{\nu_a}$ splits as the disjoint union of two negative definite strings, namely $\st_{ac}^{\leq \nu_a-1}$ and $\st_{bc}^{\geq \nu_a+1}$. Together they contain $b_2^-(X)+1$ components, which is impossible. Therefore, $1\leq \nu_b-\nu_a$.

\medskip
 {\bf Step 2:} If $\nu_b-\nu_a=2$, then $F_a=F_b$ and $(p_a,q_a)=(p_b,q_b)=(0,1)$.

 We have seen in Step 1 that $\nu_b-\nu_a=2$ implies $F_a\cdot F_b=0$. Since also $F_a\cdot [N_c]=F_b\cdot [N_c]=1$, the light cone lemma implies that $F_a=F_b$.
  It follows that
  \[
    p_a=F_a\cdot [C_{\nu_a}]=F_b\cdot [C_{\nu_a}]=0,
    \qquad
    p_b=F_b\cdot [C_{\nu_b}]=F_a\cdot [C_{\nu_b}]=0,
  \]
  and hence $F_a^2=F_b^2=0$. Since $(p_a,q_a)$ and $(p_b,q_b)$ are coprime pairs, we must have $q_a=q_b=1$.
  
\medskip
{\bf Step 3:} If $\nu_b-\nu_a=1$, then $F_a=F_b$ and $(p_a,q_a)=(q_b,p_b)$.

We perform successive toric blowups at the node $C_{\nu_a}\cap C_{\nu_b}$, using the weight sequence $(m_1,\dots,m_l)$ associated to the pair $(p_a,q_a)$ as in Section \ref{section:resfiberclass}. This produces a new string $\tilde{\st}$ in the blowup manifold $(\tilde{X},\tilde{\omega})$ with components
\[A_1\cup\cdots\cup A_{k_a}\cup N_b\cup C_1\cup\cdots\cup \tilde{C}_{\nu_a}\cup \mathcal{L}\cup\tilde{C}_{\nu_b}\cup\cdots\cup C_{k_c}\cup N_a\cup B_{k_b}\cup\cdots\cup B_1,\]
where $\tilde{C}_{\nu_a}$ and $\tilde{C}_{\nu_b}$ are the proper transforms of $C_{\nu_a}$ and $C_{\nu_b}$, and $\mathcal{L}$ is the string created by the toric blowup process. Let $e_1,\dots,e_l$ denote the exceptional classes of these blowups. Then $\mathcal{L}$ has the form
\[\tilde{C}_{e_\alpha}\cup\cdots\cup\tilde{C}_{e_\beta}\cup C_{e_l}\cup\tilde{C}_{e_\gamma}\cup\cdots\cup\tilde{C}_{e_\theta},\]
where each $\tilde{C}_{e_i}$ is the proper transform of the exceptional sphere in class $e_i$, and $C_{e_l}$ is the exceptional sphere arising from the final blowup. The resolution class
\(\tilde{F}_a:=F_a-\sum_{i=1}^l m_i e_i\)
has intersection number $1$ with $C_{e_l}$ and intersection number $0$ with every other component of $\tilde{\st}$. It also satisfies $\tilde{F}_a\cdot [N_c]=1$. Now consider the truncated string $\tilde{\st}^{\geq \gamma}\subseteq \tilde{\st}$ with components
\[B_1\cup\cdots\cup B_{k_b}\cup N_a\cup C_{k_c}\cup\cdots\cup \tilde{C}_{\nu_b}\cup \tilde{C}_{e_\theta}\cup\cdots\cup \tilde{C}_{e_\gamma}.\]
This string cannot be negative definite. Indeed, otherwise $\tilde{\st}\setminus (\tilde{C}_{e_\beta}\cup C_{e_l})$ would be a negative definite configuration spanning a lattice of rank $b_2^-(X)+l$ inside $\langle\tilde{F}_a\rangle^\perp\subseteq H_2(\tilde{X};\mathbb{Z})$, which is impossible by Lemma \ref{lem:fibercomplement}. By the same argument as in Step 1, the truncated string $\tilde{\st}^{\geq \gamma}\setminus \tilde{C}_{e_\gamma}$ must nevertheless be negative definite. Therefore, as in Section \ref{section:resfiberclass}, if $\Delta_i$ denotes the determinant of the negative intersection matrix of the first $(i-1)$ components of $\tilde{\st}^{\geq \gamma}$, then the class
\[\tilde{F}_b:=\Delta_1[B_1]+\cdots+\Delta_{k_b}[B_{k_b}]+\Delta_{k_b+1}[N_a]+\Delta_{k_b+2}[C_{k_c}]+\cdots+\Delta_{l(\tilde{\st}^{\geq \gamma})}[\tilde{C}_{e_\gamma}]\]
satisfies
\(\tilde{F}_b^2=-\Delta_{l(\tilde{\st}^{\geq \gamma})}\Delta_{l(\tilde{\st}^{\geq \gamma})+1}\geq 0.\)
By the same argument as in Step 2, we conclude that $\tilde{F}_b=\tilde{F}_a$.
Since the self-intersection sequence of $\st_{bc}^{\geq \nu_b}\setminus C_{\nu_b}$ is unchanged by the toric blowups, we have
\[F_b=\Delta_1[B_1]+\cdots+\Delta_{k_b}[B_{k_b}]+\Delta_{k_b+1}[N_a]+\Delta_{k_b+2}[C_{k_c}]+\cdots+\Delta_{k_b+k_c-\nu_b+2}[C_{\nu_b}].\]
Hence
\[\tilde{F}_b-F_b=\Delta_{k_b+k_c-\nu_b+2}([\tilde{C}_{\nu_b}]-[C_{\nu_b}]) +\Delta_{k_b+k_c-\nu_b+3}[\tilde{C}_{e_\theta}]+\cdots+\Delta_{l(\tilde{\st}^{\geq \gamma})}[\tilde{C}_{e_\gamma}]\in \Z\langle e_1,\dots,e_l\rangle.\]
Therefore, the equality $\tilde{F}_a=\tilde{F}_b$ implies $F_a=F_b$, and the equality $(p_a,q_a)=(q_b,p_b)$ follows immediately.

\medskip
{\bf Step 4:} If $[N_c]^2\geq 0$, then $\nu_b-\nu_a=2$.

Suppose $\nu_b-\nu_a=1$. By Step 3, we have $F_b\cdot[C_{\nu_b}]=p_b=q_a>0$. Since the string $\st_{ac}^{\leq \nu_a}$ is non-negative definite, so is any of its toric or half-toric blowdowns. After blowing down the $N_c$-sphere, there must be at least one $(-1)$-component. Moreover, because $\st_{ac}^{\leq \nu_a}$ has only one $(-1)$-component, namely $N_c$, Lemma \ref{lem:adjacent-1} shows that one of the following cases occurs.
\begin{itemize}[nosep]
\item If there is only one $(-1)$-component after blowing down $N_c$, then by Lemma \ref{lem:-1endpoint} it cannot appear as an endpoint of the string. Hence we can perform a further toric blowdown.
\item If there are two adjacent $(-1)$-components after blowing down $N_c$, let $U$ be the sum of their homology classes, so that $U^2=0$. Since $[N_c]^2\geq 0$, if $U\cdot [N_c]=0$, then the light cone lemma implies that the nonzero classes $U$ and $[N_c]$ are proportional. This is impossible because $U\cdot [B_1]=0$ while $[N_c]\cdot [B_1]=1$. Thus one of the $(-1)$-components must intersect $N_c$. Moreover, if $U\cdot [C_{\nu_b}]=0$, then $U\cdot F_b=0$, so the nonzero classes $U$ and $F_b$ are again proportional by the light cone lemma. This contradicts $F_b\cdot [C_{\nu_b}]>0$. Therefore one of the $(-1)$-components must intersect $C_{\nu_b}$, which in turn implies that the entire string consists only of these two $(-1)$-components.
\end{itemize}

From the above analysis, we see that the string $\st_{ac}^{\leq \nu_a}$ can be contracted to two $(-1)$-components by a sequence of toric blowdowns. The same argument applies to $\st_{bc}^{\geq \nu_b}$. Together with the $N_c$-component, this would produce a toric divisor with self-intersection sequence $(-1,-1,[N_c]^2,-1,-1)$. However, such a toric configuration cannot exist: after further blowing down the two $(-1)$-components adjacent to $N_c$, we would obtain the toric boundary self-intersection sequence $(0,[N_c]^2+2,0)$ on $\CP^2$, which is impossible. Therefore $\nu_b-\nu_a=2$ whenever $[N_c]^2\geq 0$.

\medskip
{\bf Step 5:} If $[N_c]^2=-1$, then $\nu_b-\nu_a=1$.

Suppose $\nu_b-\nu_a=2$. As in Step 4, we may perform toric blowdowns contracting $\st_{ac}^{\leq \nu_a}$ to a string with two adjacent $(-1)$-components, and we denote the sum of their homology classes by $U_a$. Since $U_a\cdot F_b=0$, the light cone lemma implies that the nonzero classes $U_a$ and $F_b$ are proportional. By Step 2, we have $F_b\cdot [C_{\nu_a+1}]=1$, so one of these $(-1)$-components must intersect $C_{\nu_a+1}$. Similarly, $\st_{bc}^{\geq \nu_b}$ can be contracted to a string containing two adjacent $(-1)$-components; denote the sum of their homology classes by $U_b$. Again, one of these components must meet $C_{\nu_a+1}$. Applying the light cone lemma once more, we obtain $U_a=U_b$. Hence both contracted strings must also intersect $N_c$. This produces a toric configuration with self-intersection sequence $(-1,-1,[N_c]^2,-1,-1,[C_{\nu_a+1}]^2)$.
This is impossible, because in $\CP^2\#3\overline{\CP}^2$ with symplectic canonical class $K$,
\[6=K^2=(-1-1+[N_c]^2-1-1+[C_{\nu_a+1}]^2)+12,\]
while $[N_c]^2=-1$ and $[C_{\nu_a+1}]^2\leq -2$. Therefore, $\nu_b-\nu_a=1$ whenever $[N_c]^2=-1$.

\medskip
{\bf Step 6:} Existence of symplectic affine rulings.

It follows from the previous steps together with Lemma \ref{lem:resolutionfiberclass} that, when $[N_c]^2\geq 0$, the common class $F_a=F_b$ is a fiber class. For generic $J\in\mathcal{J}(D)$, this class is represented by an embedded $J$-holomorphic sphere of self-intersection $0$. By its intersection pattern, this sphere meets the unique component $C_{\nu_a+1}=C_{\nu_b-1}$ transversely at a single point and is disjoint from $\st_a\cup\st_b$. This is precisely the desired embedded symplectic sphere. When $[N_c]^2=-1$, after performing successive toric blowups using the common weight sequence of $(p_a,q_a)=(q_b,p_b)$, the resolved class $\tilde{F}_a=\tilde{F}_b$ is represented by an embedded symplectic sphere of self-intersection $0$. By \cite[Proposition 4.3.1]{McduffSiegel}, this sphere corresponds to a symplectic $(p_a,q_a)$-unicuspidal rational curve whose cusp is located at $C_{\nu_a}\cap C_{\nu_b}$ and which is disjoint from the other components of $D$.

\end{proof}

\begin{remark}
  The main point of Proposition \ref{prop:affineruling} is to provide a criterion by $[N_c]^2$ for determining whether the symplectic affine rulings intersecting only the string $\st_c$ are represented by smooth curves or by singular curves. For weighted projective planes, there may be many other affine rulings. Indeed, by reversing the orientations of the strings $\st_{ac}$ and $\st_{bc}$, we can construct rulings that only intersect $\st_a$ or $\st_b$. In the algebraic setting, Daigle--Russell (\cite{DP1,DP2}) gave a combinatorial description that enumerates affine rulings modulo a certain equivalence relation on normal rational surfaces with Picard rank $1$. It would be interesting to develop a symplectic analogue of this result as well.
\end{remark}

\begin{example}
  We conclude with the example of $\CP(11,13,14)$, which exhibits the necessity of considering singular curves for affine rulings intersecting $\st_c$-components rather than only embedded ones. For $(a,b,c)=(11,13,14)$, the resolution strings $\st_a,\st_c,\st_b$ have self-intersection sequences
  \[(-2,-3,-2,-2),\quad (-3,-5),\quad (-2,-2,-2,-2,-2,-3).\]
This is an example with $[N_c]^2=-1$, since in $\CP^2\#12\overline{\CP}^2$ with canonical class $K$,
\[-3=K^2=-2-3-2-2-3-5-2-2-2-2-2-3+[N_a]^2+[N_b]^2+[N_c]^2+30.\]
We have $\nu_a=1$ and $\nu_b=2$. The strings $\st_{ac}^{\leq 1}$ and $\st_{bc}^{\geq 2}$ have self-intersection sequences
\[(-2,-3,-2,-2,-1,-3),\qquad (-3,-2,-2,-2,-2,-2,-1,-5),\]
respectively, and their corresponding determinant $\Delta_i$-sequences from Section \ref{section:resfiberclass} are
\[(1,2,5,8,11,3,-2),\qquad (1,3,5,7,9,11,13,2,-3).\]
It follows that the common class $F_a=F_b$ is represented by a symplectic $(2,3)$-unicuspidal rational curve with cusp located at the intersection of the $(-3)$- and $(-5)$-components of $\st_c$. See Figure \ref{fig:111314}.
\end{example}

\begin{figure}[h]
\begin{center}
\begin{tikzpicture}[x=1cm,y=1cm,line cap=round,line join=round,>=Latex]
  \tikzset{
    string/.style={draw=black!70, line width=0.8pt},
    hidden/.style={draw=black!55, dashed, line width=0.8pt},
    cusp/.style={draw=red!85!brown, line width=1pt}
  }

  \draw[string] (0.0,0) -- (1,-0.5);
  \draw[string] (0.8,-0.5) -- (1.8,0.0);
  \draw[string] (1.6,0.0) -- (2.6,-0.5);
  \draw[string] (2.4,-0.5) -- (3.4,0.0);
  \node at (0.5,-0.2) {$-2$};
  \node at (1.2,-0.2) {$-3$};
  \node at (2.2,-0.2) {$-2$};
  \node at (2.9,-0.2) {$-2$};
  \node at (1.75,-0.92) {$\st_a$};

  \draw[hidden] (3.1,-0.1) -- (4.8,-0.1);

  \node[above] at (3.95,-0.08) {$N_b$};

  \draw[string] (4.5,0.0) -- (5.5,-0.5);
  \draw[string] (5.3,-0.5) -- (6.3,0.0);
  \node at (4.8,-0.3) {$-3$};
  \node at (6,-0.3) {$-5$};
  \node at (5.55,-0.92) {$\st_c$};

  \draw[hidden] (6.0,-0.1) -- (7.7,-0.1);
  \node[above] at (6.85,-0.08) {$N_a$};

  \draw[string] (7.4,0.0) -- (8.4,-0.5);
  \draw[string] (8.2,-0.5) -- (9.2,0.0);
  \draw[string] (9,0.0) -- (10.0,-0.5);
  \draw[string] (9.8,-0.5) -- (10.8,0.0);
  \draw[string] (10.6,0.0) -- (11.6,-0.5);
  \draw[string] (11.4,-0.5) -- (12.4,0.0);
  \node at (7.9,-0.2) {$-2$};
  \node at (8.6,-0.2) {$-2$};
  \node at (9.6,-0.2) {$-2$};
  \node at (10.3,-0.2) {$-2$};
  \node at (11.3,-0.2) {$-2$};
  \node at (12,-0.2) {$-3$};
  \node at (9.75,-0.92) {$\st_b$};

  \draw[cusp] (4.4,0.4) .. controls (4.8,0.2) and (5.1,0) .. (5.4,-0.45);
  \draw[cusp] (6.4,0.4) .. controls (6,0.2) and (5.7,0.) .. (5.4,-0.45);
  \node[above] at (5.5,0.2) {$(2,3)$-cusp};
\end{tikzpicture}
\caption{The red curve represents a symplectic $(2,3)$-cuspidal affine ruling in class $F_a=F_b$ on the minimal resolution of $\CP(11,13,14)$.}\label{fig:111314}
\end{center}
\end{figure}

\subsection{Recovering sub-toric configurations from affine rulings}
We now prove the converse direction of Theorem \ref{thm:ruling}. We begin by adapting the argument of Section \ref{section:connecting} to show that three strings can still be joined into a connected configuration by exceptional spheres when the symplectic area assumption is replaced by the existence of affine ruling. 

\begin{lemma}\label{lemma:connecting}
   Let $D=\st_1\cup\st_2\cup\st_3$ be disjoint symplectic strings in $(X,\omega)$ with $l(\st_1)+l(\st_2)+l(\st_3)=b_2^-(X)$. Assume that $A$ is a component of $\st_3$ such that every component of $D\setminus A$ has self-intersection at most $-2$. If there exists an embedded symplectic sphere $C$ of self-intersection $0$ that intersects $A$ transversely at a single point and is disjoint from all other components of $D$, then there exist symplectic exceptional spheres $C_1,C_2$ such that
   \begin{itemize}[nosep]
    \item $D\cup C_1\cup C_2$ is a symplectic divisor without non-toric exceptional spheres;
    \item $[C_1]\cdot [\st_1]=[C_1]\cdot [\st_3]=[C_2]\cdot [\st_2]=[C_2]\cdot [\st_3]=1$, $[C_1]\cdot[\st_2]=[C_2]\cdot[\st_1]=[C_1]\cdot [C_2]=0$.
   \end{itemize}
\end{lemma}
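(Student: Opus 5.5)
The plan is to use the fiber class $F:=[C]$ as a ruling of $X$ and to read the two exceptional spheres off its reducible fibers, in the same way as in the proof of Proposition~\ref{prop:nonontoric}. I have not checked the key counting step, and a first pass at it runs into a tension with the hypotheses, which I flag below.

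\textbf{Step 1 (the ruling).} Since $C$ is an embedded sphere of square $0$, $F$ is a fiber class and $b_2^+(X)=1$. We have $F\cdot[A]=1$ and $F\cdot[S]=0$ for every other component $S$ of $D$. Fix a generic $J\in\mathcal{J}(D)$ for which $F$ has an embedded $J$-holomorphic representative. Through every point there is a $J$-subvariety in class $F$. By positivity of intersection, every component of $D\setminus A$ lies entirely inside some reducible fiber. Because $F\cdot[A]=1$, $A$ is a section, so the space of fibers is identified with $A$. Each fiber meets $A$ exactly once, through a unique component of multiplicity one, which is therefore not a component of $D$. By Proposition~\ref{prop:fiberclass}, every reducible fiber $\Theta$ is a tree of spheres, every proper subtree of $\Theta$ is negative definite, and $\Theta$ contains either two $(-1)$-spheres or a $(-1)$-sphere of multiplicity at least $2$.

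\textbf{Step 2 (counting, and choosing $B_1,B_2$).} Let $\Theta_1,\Theta_2$ be the fibers containing $\st_1,\st_2$, and let $\Theta'_3,\Theta''_3$ be the fibers containing the at most two substrings of $\st_3\setminus A$. Two inputs constrain these fibers. The first is the full condition $l(\st_1)+l(\st_2)+l(\st_3)=b_2^-(X)$ together with Lemma~\ref{lem:fibercomplement}. The second is that, writing $X$ as an $N$-fold blowup of a ruled surface, the non-generic fibers contribute exactly $N=b_2^-(X)-1$ extra components in total, so $\sum_{\Theta}(l(\Theta)-1)=b_2^-(X)-1$. From these I would show that $\Theta_1\neq\Theta_2$. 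Otherwise a single component joining $\st_1$ to $\st_2$ would give a negative definite subtree that is too large, as in the selection of $B_2,B_3$ in Proposition~\ref{prop:nonontoric}. I would then show that $\Theta_i$ contains a $(-1)$-component $B_i$ that meets $\st_i$ and meets $\st_3$ exactly once, either at $A$ or at the piece of $\st_3\setminus A$ sharing its fiber.

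\textbf{Main obstacle.} The genuinely delicate point is this exact bookkeeping of extra components in each reducible fiber against $b_2^-$. Taken naively, the count says each reducible fiber has exactly one component outside $D$, namely the component meeting $A$. That component has multiplicity one, while every other component is a $D$-component of square at most $-2$. This clashes with Proposition~\ref{prop:fiberclass}(3), since the adjunction sum $\sum m_i(-2-[C_i]^2)$ would then be at least $-1$ rather than $-2$. So before anything else I would recheck which components are forced into fibers and how multiplicities enter. The likeliest fix is that the pieces of $\st_3\setminus A$ share fibers with $\st_1$ or $\st_2$, and that the $(-1)$-curves are the connecting components. This is the step I expect to require the most care.

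\textbf{Step 3 (realizing the spheres).} Once the classes $[B_1],[B_2]$ with the stated intersection numbers are identified, they are exceptional classes pairing nonnegatively with every component of $D$, and $[B_1]\cdot[B_2]=0$. Applying Theorem~\ref{thm:MO15} through Lemma~\ref{lem:blowupexceptional}, first to $D$ and then to $D\cup C_1$, gives embedded symplectic representatives $C_1,C_2$ such that $D\cup C_1\cup C_2$ is a symplectic divisor. Finally, I would rule out non-toric exceptional spheres for $D\cup C_1\cup C_2$ by repeating the negative-definiteness count of Proposition~\ref{prop:nonontoric}: such a sphere, together with the fiber structure, would produce more than $b_2^-(X)$ disjoint negative definite components.
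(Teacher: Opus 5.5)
Your proposal does not yet prove the lemma. Step~2 carries all of the content: $\Theta_1\neq\Theta_2$, the existence of the $(-1)$-components $B_i$, and their intersection numbers. You only announce it, and the tension you run into comes from a false assertion in Step~1. You say that the multiplicity-one component through which a fiber meets $A$ is ``therefore not a component of $D$''. But the (at most two) neighbours of $A$ in $\st_3$ are components of $D\setminus A$ that meet $A$. They lie in reducible fibers and can be exactly that component. So the clash with Proposition~\ref{prop:fiberclass}(3) comes from this slip, not from an inconsistency in the hypotheses.

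Once corrected, your count does prove the lemma, by a route different from the paper's case-by-case use of Lemma~\ref{lem:fibercomplement}.
\begin{itemize}[nosep]
\item You only need the inequality $\sum_\Theta(l(\Theta)-1)\le b_2^-(X)-1$. It follows from Proposition~\ref{prop:fiberclass}(2) and $b_2^+(X)=1$, with no ruled-surface model.
\item The $b_2^-(X)-1$ components of $D\setminus A$ all lie in reducible fibers, and each reducible fiber contains a $(-1)$-sphere outside $D$. Hence every reducible fiber has exactly one non-$D$ component, and equality holds in the count.
\item That component is the fiber's only $(-1)$-sphere, so it has multiplicity at least $2$ and cannot meet $A$. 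The fiber therefore meets $A$ through a neighbour of $A$ in $\st_3$. Consequently $\Theta_1$ and $\Theta_2$ each contain one of $\st_3',\st_3''$, which is your guessed fix.
\item You must still exclude $\Theta_1=\Theta_2$. In that case the fiber would be $\st_1\cup\st_2\cup\st_3'\cup B$, with $B$ meeting all three strings. Write $C_k$ for the fiber components, with multiplicities $m_k$ and valences $\deg C_k$. Adjunction and $F\cdot[C_k]=0$ give $-2=\sum_k(\deg C_k-2)m_k$. Multiplicities strictly decrease along each arm of the strings leading away from $B$, since all weights there are at most $-2$. So the right side is at least $m_B$ minus the sum of the multiplicities of the three neighbours of $B$, which is $0$ because $F\cdot[B]=0$. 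This contradicts $-2$.
\item Hence $\Theta_1=\st_1\cup B_1\cup\st_3'$ and $\Theta_2=\st_2\cup B_2\cup\st_3''$. The tree property then gives every stated intersection number for $C_i:=B_i$.
\item The $B_i$ are already embedded $J$-spheres, so invoking Lemma~\ref{lem:blowupexceptional}, which concerns blowups, is unnecessary.
\item For the last item, let $C_e$ be a non-toric sphere meeting only a component $Y$ of $\Theta_i$. Computed via $\Theta_i$ it has $[C_e]\cdot F=m_Y>0$, while computed via $\Theta_{3-i}$ it has $[C_e]\cdot F=0$, a contradiction.
\item If instead $Y=A$, then $[C_e]\in F^\perp$ is orthogonal to every fiber component. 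Because equality holds in the count, these components span $F^\perp\otimes\mathbb{Q}$, which forces $[C_e]^2=0$.
\end{itemize}
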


\begin{proof}
  After a small perturbation, we may assume that $D\cup C$ is a symplectic divisor. Choose any $J\in \mathcal{J}(D\cup C)$ such that the fiber class $[C]$ is $J$-nef. Let $\Theta_1,\Theta_2$ be the $J$-holomorphic subvarieties in class $[C]$ passing through points on $\st_1$ and $\st_2$, respectively. We claim that $\Theta_1\neq \Theta_2$. Indeed, otherwise there would exist a component $B\in \Theta_1=\Theta_2$ connecting $\st_1$ to $\st_2$. By Proposition \ref{prop:fiberclass}(2) and our assumption on the self-intersection numbers of the components of $D\setminus A$, the configuration $(D\setminus A)\cup B$ would then be negative definite. However, it has $b_2^-(X)$ components, contradicting Lemma \ref{lem:fibercomplement}.

  Therefore, by Proposition \ref{prop:fiberclass}(3), we can choose two disjoint symplectic exceptional spheres $C_1,C_2$ contained in $\Theta_1,\Theta_2$, respectively. Since the configurations of $\Theta_1$ and $\Theta_2$ are trees of embedded spheres by Proposition \ref{prop:fiberclass}(1), after another small perturbation we may assume that $D\cup C_1\cup C_2$ is a symplectic divisor. We now verify the stated intersection relations below.
  
  First, since $\Theta_1\neq \Theta_2$, we naturally have
  \[
    [C_1]\cdot[\st_2]=[C_2]\cdot[\st_1]=[C_1]\cdot [C_2]=0.
  \]
  Next, we claim that $C_1$ (respectively, $C_2$) must intersect both $\st_1$ and $\st_3$ (respectively, $\st_2$ and $\st_3$). Indeed, if $C_1$ did not intersect both $\st_1$ and $\st_3$, then $\Theta_1$ would contain components other than $C_1\cup \st_1$, and hence $(D\setminus A)\cup C_1$ would be negative definite by Proposition \ref{prop:fiberclass}(2), again contradicting Lemma \ref{lem:fibercomplement}. The same argument applies to $C_2$. Since the configurations of $\Theta_1$ and $\Theta_2$ are trees and hence contain no loops, it follows that
  \[
    [C_1]\cdot[\st_1]=[C_2]\cdot[\st_2]=1.
  \]
  Finally, we show that $[C_1]\cdot [\st_3]\leq 1$; the same proof gives $[C_2]\cdot [\st_3]\leq 1$. Let $\st_3',\st_3''$ denote the two (possibly empty) strings of connected components of $\st_3\setminus A$, and let $\Theta'_3,\Theta''_3$ be the $J$-holomorphic subvarieties passing through $\st_3',\st_3''$, respectively. There are two cases.
  \begin{itemize}[nosep]
   \item If $\Theta_1\neq \Theta_3'$ and $\Theta_1\neq \Theta_3''$, then the configuration of $\Theta_1$ cannot contain more components than $\st_1\cup C_1$ by Lemma \ref{lem:fibercomplement}. This will imply that $C_1$ must intersect the $A$-component. However, since $C_1$ is the only $(-1)$-component in $\Theta_1$, Proposition \ref{prop:fiberclass}(3) indicates that the multiplicity of $C_1$ is at least $2$. This contradicts $[C]\cdot A=1$.
    \item If $\Theta_1=\Theta_3'$ (similarly, if $\Theta_1=\Theta_3''$), then Proposition \ref{prop:fiberclass}(1) gives $[C_1]\cdot [\st_3']=1$ since there is no loop, while $[C_1]\cdot [A]=0$ because $[C]\cdot [A]=1$, and $[C_1]\cdot [\st_3'']=0$ because $\Theta_3'\neq \Theta_3''$.
  \end{itemize}
  Therefore, we must have $[C_1]\cdot [\st_3]=[C_1]\cdot([\st_3']+[\st_3'']+[A])=1$. By symmetry, also $[C_2]\cdot [\st_3]=1$, and hence
  \[
    [C_1]\cdot [\st_1]=[C_1]\cdot [\st_3]=[C_2]\cdot [\st_2]=[C_2]\cdot [\st_3]=1.
  \]
  It remains to show that $D\cup C_1\cup C_2$ admits no non-toric exceptional spheres. By the preceding analysis, after possibly relabeling we may assume that $\Theta_1=\Theta_3'$ and $\Theta_2=\Theta_3''$. Suppose, for contradiction, that there exists a non-toric exceptional sphere $C_e$. Then the configuration $C_e\cup (D\setminus A)$ must be negative definite. This is immediate if $C_e$ intersects the $A$-component. If instead $C_e$ intersects a component of $\st_3'$ (and similarly for $\st_3''$), then $\st_1\cup C_e\cup\st_3'$ is a proper sub-configuration of $\Theta_1$, so negative definiteness again follows from Proposition \ref{prop:fiberclass}(2). Since $C_e\cup (D\setminus A)$ is disjoint from $C$ and has $b_2^-(X)$ components, this contradicts Lemma \ref{lem:fibercomplement}.

\end{proof}

\begin{proposition}\label{prop:recoverfromruling}
  Let $D=\st_1\cup\st_2\cup\st_3$ be three disjoint symplectic HJ strings in $(X,\omega)$ where $[\omega]\cdot K_\omega<0$. Suppose $D$ is full and of $(a,b,c)$-type. If there exists $C\subseteq (X,\omega)$ which is
\begin{itemize}[nosep]
  \item either a symplectic $(p,q)$-unicuspidal rational curve with $[C]^2=pq$, whose $(p,q)$-cusp is at the intersection of two components of one string $\st_i$ and disjoint from other components in $D$;
  \item or an embedded symplectic sphere of self-intersection $0$ intersecting exactly one component in $D$ transversely,
\end{itemize} 
then $D$ is sub-toric.
\end{proposition}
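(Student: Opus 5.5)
Both cases reduce to the embedded case; then Lemma \ref{lemma:connecting} connects the strings and Proposition \ref{prop:cycle} completes them to a toric cycle.

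\textbf{Reduction to an embedded fiber.} In the cuspidal case, let the cusp sit at $A\cap A'$ for consecutive components of $\st_3$, with contact orders $p,q$. Following \cite[Proposition 4.3.1]{McduffSiegel} in reverse, I perform the $L$ toric blowups at $A\cap A'$ dictated by the weight sequence $\mathcal{W}(p,q)$ of Section \ref{section:resfiberclass}, with small areas. The proper transform of $C$ is then an embedded sphere in class $[C]-\sum m_ie_i$, of self-intersection $pq-\sum m_i^2=0$. It meets only the last exceptional component $C_L$, once transversely. The blown-up configuration $\tilde D$ consists of $\st_1$, $\st_2$ and the total transform $\tilde\st_3$. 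Its only non-$(\le -2)$ component is $C_L$, since all other new components have square $\le -2$ and the proper transforms of $A,A'$ only get more negative. It has $b_2^-(X)+L=b_2^-(\tilde X)$ components. So Lemma \ref{lemma:connecting} applies with $A:=C_L$. In the embedded case it applies directly with $L=0$.

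\textbf{Connecting the strings.} Lemma \ref{lemma:connecting} gives exceptional spheres $C_1,C_2$ joining $\st_1$ to $\tilde\st_3$ and $\st_2$ to $\tilde\st_3$, with no non-toric exceptional spheres for $\tilde D\cup C_1\cup C_2$. From the proof, $C_1,C_2$ attach to the two sides $\st_3',\st_3''$ of $C_L$ in $\tilde\st_3$. Hence the connected divisor $\mathcal D:=\tilde D\cup C_1\cup C_2$ is a tree with $b_2(\tilde X)+1$ components containing $C_L$. I then blow down the $L$ toric exceptional curves of the resolution in reverse order, using Lemma \ref{lem:blowupexceptional} to keep $C_1,C_2$ as exceptional spheres. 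This returns to $X$ with a connected divisor $\overline{\overline D}=\st_1\cup C_1\cup\st_3\cup C_2\cup\st_2$ of length $b_2(X)+1$.

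\textbf{Replacing gap-admissibility.} In Section \ref{Section:HJstrings}, gap-admissibility was used only to get $[\omega]\cdot(K_\omega+[\overline{\overline D}])<0$ and the absence of non-toric and exterior exceptional spheres. The latter now follow from Lemma \ref{lemma:connecting} and the full condition. For the former, I would use the fiber class. I expect $-K_\omega-[\overline{\overline D}]$, or its analogue upstairs, to be a positive combination of $[C]$-type classes and the components of reducible fibers $\Theta_1,\Theta_2$. More concretely, I would show the complementary curve exists directly: $\Theta_1$ and $\Theta_2$ are fibers, and $\Theta_1-(\text{the part in }\mathcal D)$ gives the remaining components. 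Rather than rely on area, I would check that $-K_{\tilde\omega}-[\mathcal D]$ is a nonnegative combination of $[C]$ and components of $\Theta_1\cup\Theta_2$ outside $\mathcal D$, using $K\cdot[C]=-2$ and the tree structure.

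\textbf{Main obstacle and finish.} The main obstacle is this area/positivity step, together with excluding branching components in $\overline{\overline D}$. In the ruling setting $\overline{\overline D}$ is automatically a string: $C_1,C_2$ meet $\st_3$ on opposite sides of $A$, and the tree property of fibers forces them to hit endpoints. I would verify this endpoint claim via the negative-definiteness arguments of Proposition \ref{prop:isstring}. Once $\overline{\overline D}$ is a string with $[\omega]\cdot(K_\omega+[\overline{\overline D}])<0$, Proposition \ref{prop:cycle} and \cite[Theorem 1.5]{LMN22} give sub-toricity. The $(a,b,c)$-type hypothesis is then used only to identify the result with the minimal resolution of $\CP(a,b,c)$, as in the `only if' part of Theorem \ref{thm:main}.
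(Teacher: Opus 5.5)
There is a genuine gap exactly at the step you flag as the main obstacle, and the certificate you propose for it cannot exist. Put $U:=-K_{\tilde\omega}-[\tilde D]-[C_1]-[C_2]$. Since $\tilde C$ is an embedded $0$-sphere meeting $\tilde D$ once and disjoint from $C_1,C_2$, you get $U\cdot[\tilde C]=2-1=1$. But every nonnegative combination of $[\tilde C]$ and fiber components pairs to $0$ with $[\tilde C]$. There is also nothing to combine: the proof of Lemma \ref{lemma:connecting} shows that $\Theta_1,\Theta_2$ are supported exactly on $\st_1\cup C_1\cup\st_3'$ and $\st_2\cup C_2\cup\st_3''$. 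In the toric model $U=[N_c]$ is a section, not a fiber combination.

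The missing idea is numerical. One has $U^2=(K_{\tilde\omega}+[\tilde D])^2+2\ge -1$. The bound on the adjoint class comes from Corollary \ref{cor:3nminus6}, which is where $(a,b,c)$-type is used, and it survives the toric blowups because they preserve the adjoint class. If $U^2\ge 0$, then $U\cdot[\tilde C]=1>0$ puts $U$ in the same component of the positive cone as $[\tilde C]$, so $[\tilde\omega]\cdot U>0$. If $U^2=-1$, the connected loop-free configuration gives $K_{\tilde\omega}\cdot U=-1$, and \cite[Theorem A]{LiLiu96} makes $U$ an exceptional class. Without this you do not have $[\tilde\omega]\cdot(K_{\tilde\omega}+[\overline{\overline D}])<0$. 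So neither the reduction machinery nor Proposition \ref{prop:cycle} is available to you.

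Your branching step is also not handled. The tree property does not force $C_1,C_2$ to hit endpoints, because singular fibers can branch. For example, a multiplicity-$4$ $(-1)$-curve meeting a $(-2)$-curve and the middle of a $(-2,-3,-2)$-chain is a fiber of an iterated blowup of a ruled surface. So $\Theta_1=\st_1\cup C_1\cup\st_3'$ could a priori be such a branched tree.

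The paper excludes branching in several steps:
\begin{itemize}
\item Remark \ref{rmk:-1reduction}, which itself needs the area inequality above.
\item The argument of Proposition \ref{prop:isstring}, which forces $\st_1,\st_2$ to be $(-2)$-strings.
\item Lemma \ref{lem:-2chains}. So $(a,b,c)$-type is essential here, not merely for identifying the end result as you claim.
\item In the embedded case, Claim \ref{claim:sphereinHirzebruch}. In the cuspidal case, a new $\Xi$-count comparing $\Xi(\check\st_3)=k-9$ with the value $-10$ for the reduced string $(-1,-1,-1,-1,-1)$.
\end{itemize}

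Finally, your early blowdown back to $X$ is unjustified. Lemma \ref{lem:blowupexceptional} transports exceptional spheres only upward under blowups, not downward. Before branching is excluded, $C_1,C_2$ may meet components of the resolution chain $\mathcal L$, and then their classes involve the $e_i$. In that case they need not descend to exceptional spheres meeting $D$ transversely. The paper avoids this by running the whole argument in $X\#L\overline{\CP}^2$ with $\tilde D$.
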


\begin{proof}
  We may assume that $C$ intersects $\st_3$ after relabeling. First, observe that the hypotheses of Lemma \ref{lemma:connecting} can always be recovered. If $C$ is embedded, this follows directly from the HJ assumption on $\st_1,\st_2,\st_3$. If $C$ is $(p,q)$-unicuspidal, then we proceed as in Section \ref{section:resfiberclass} by performing successive toric blowups determined by the weight sequence $\mathcal{W}(p,q)=(m_1,\cdots,m_L)$. Let $e_1,\cdots,e_L$ denote the corresponding exceptional classes. Then the total transform of $D$ is again a disjoint union of three strings, now with total length $b_2^-(X)+L$, and all components other than the $e_L$-component have self-intersection at most $-2$. Moreover, the resolution class $[C]-\sum_{i=1}^L m_i e_i$ is a fiber class in $(X\#L\overline{\CP}^2,\tilde{\omega})$, represented by an embedded symplectic sphere $\tilde{C}$ that intersects only the $e_L$-component of the total transform of $D$. Therefore, by Lemma \ref{lemma:connecting}, there exist exceptional spheres $C_1$ and $C_2$ in $(X,\omega)$ or in its blowup $(X\#L\overline{\CP}^2,\tilde{\omega})$ such that, by denoting the total transform of $D$ still by $\tilde{D}$ ($\tilde{D}=D$ if $L=0$), $\tilde{D}\cup C_1\cup C_2$ forms a connected symplectic divisor.
  
Next, consider the class
\[
  U:=-K_{\tilde{\omega}}-[\tilde{D}]-[C_1]-[C_2].
\]
We claim that $[\tilde{\omega}]\cdot U>0$. By the intersection pattern in Lemma \ref{lemma:connecting},
\[
  U^2=(-K_{\tilde{\omega}}-[\tilde{D}])^2+(-[C_1]-[C_2])^2+2(-K_{\tilde{\omega}}-[\tilde{D}])\cdot(-[C_1]-[C_2])
  =(K_{\tilde{\omega}}+[\tilde{D}])^2+2.
\]
Since $D$ is full and of $(a,b,c)$-type, Corollary \ref{cor:3nminus6} implies that $(K_{\omega}+[D])^2\geq -2$. Because toric blowup preserves the adjoint class, it follows that $(K_{\tilde{\omega}}+[\tilde{D}])^2\geq -2$ as well. Hence $U^2\geq -1$.

If $U^2\geq 0$, then, because $C_1$ and $C_2$ lie in the complement of $\tilde{C}$,
\[
  U\cdot [\tilde{C}]=(-K_{\tilde{\omega}}-[\tilde{D}])\cdot [\tilde{C}]=2-1=1.
\]
Therefore $U$ and $[\tilde{C}]$ lie in the same connected component of the positive cone in $H_2(X\#L\overline{\CP}^2;\Z)$. Since $[\tilde{\omega}]\cdot[\tilde{C}]>0$, we conclude that $[\tilde{\omega}]\cdot U>0$.

If $U^2=-1$, then, because the configuration $\tilde{D}\cup C_1\cup C_2$ is connected and loop-free, adjunction gives
\begin{align*}
  K_{\tilde{\omega}}\cdot U&=(-U-[\tilde{D}]-[C_1]-[C_2])\cdot U=-U^2-([\tilde{D}]+[C_1]+[C_2])\cdot U\\
  &=1-([\tilde{D}]+[C_1]+[C_2])\cdot (-K_{\tilde{\omega}}-[\tilde{D}]-[C_1]-[C_2])\\
  &=1-2=-1.
\end{align*}
It then follows from \cite[Theorem A]{LiLiu96} that $U\in \mathcal{E}_{\tilde{\omega}}$, and hence again $[\tilde{\omega}]\cdot U>0$.
 
Therefore, $\overline{\overline{D}}:=\tilde{D}\cup C_1\cup C_2$ is a connected symplectic divisor satisfying $[\tilde{\omega}]\cdot(K_{\tilde{\omega}}+[\overline{\overline{D}}])<0$. Hence we may carry out the quasi-minimal reduction for $\overline{\overline{D}}$ as in Section \ref{sec:LN25}. By Lemmas \ref{lemma:connecting} and \ref{lem:blowupexceptional}, this reduction uses only toric and half-toric blowdowns. We claim that $\overline{\overline{D}}$ has no branching component.

Suppose on the contrary that $\overline{\overline{D}}$ has a branching component $B$. By Remark \ref{rmk:-1reduction}, the argument of Lemma \ref{lem:-1reduction} still shows that one connected component of $\overline{\overline{D}}\setminus B$ can be contracted to a $(-1)$-sphere through a sequence of toric and half-toric blowdowns. Then, exactly as in the proof of Proposition \ref{prop:isstring}, every branching component of $\overline{\overline{D}}$ must lie on $\st_3$, and $\st_1$ and $\st_2$ must be $(-2)$-strings. After permuting $(a,b,c)$, we may therefore assume that $l(\st_1)=a-1$ and $l(\st_2)=b-1$. By Lemma \ref{lem:-2chains}, the string $\st_3$ has self-intersection sequence $(-a,-k,-b)$ for some $k\ge 2$, $(1-a,1-b)$ when $k=1$ and $b\neq 2$, or $(2-a)$ when $k=1$ and $b=2$. Performing $a-1$ toric blowdowns followed by one half-toric blowdown contracts $\st_1\cup C_1$, and similarly $b-1$ toric blowdowns followed by one half-toric blowdown contract $\st_2\cup C_2$. Thus $\overline{\overline{D}}$ can be reduced to a single string $\check{\st}_3$.

If $\tilde{D}=D$, so that $C$ is an embedded symplectic sphere, the rest of the argument is identical to that of Proposition \ref{prop:isstring}. In this case the branching component can only be the $(-k)$-component of $\st_3$, and $\check{\st}_3$ is a $3$-component string in a Hirzebruch surface. We can use Claim \ref{claim:sphereinHirzebruch} to obtain contradiction.

Assume now that $\tilde{D}\neq D$, so that $C$ is cuspidal. Let $\tilde{\st}_3\subseteq \tilde{D}$ be the total transform of $\st_3$ under the successive toric blowups. Then $\check{\st}_3$ can also be viewed as being obtained from $\tilde{\st}_3$ by performing $a+b$ non-toric blowdowns at the two distinct components to which $C_1$ and $C_2$ are attached. After deleting the $e_L$-component, the string $\check{\st}_3$ contains two non-negative definite strings $\check{\st}_3'$ and $\check{\st}_3''$. Aside from the two components where non-toric blowdowns occur, every component of $\check{\st}_3'$ and $\check{\st}_3''$ has self-intersection at most $-2$. Lemma \ref{lem:adjacent-1} therefore implies that each of $\check{\st}_3'$ and $\check{\st}_3''$ can be reduced, using only toric blowdowns, to a length-$2$ string of two $(-1)$-components. Consequently, $\check{\st}_3$ itself can be reduced by toric blowdowns to a string $\st_3'''$ with self-intersection sequence $(-1,-1,-1,-1,-1)$, where the middle $(-1)$ corresponds to the $e_L$-component.

We now consider the $\Xi$-invariant introduced in the proof of Proposition \ref{prop:secondexceptional}. In the case where $\st_3$ has self-intersection sequence $(-a,-k,-b)$, we have $\Xi(\st_3)=a+b+k-9$, and hence also $\Xi(\tilde{\st}_3)=a+b+k-9$ because toric blowups preserve $\Xi$. Contracting $\st_1\cup C_1$ and $\st_2\cup C_2$ increases the self-intersection numbers of two components of $\tilde{\st}_3$ by $a$ and $b$, respectively, so $\Xi(\check{\st}_3)=k-9$. On the other hand, the previous paragraph shows that $\check{\st}_3$ reduces to $\st_3'''$, so $\Xi(\check{\st}_3)=\Xi(\st_3''')=-10$. This is impossible because $k\ge 2$. The remaining cases lead to contradictions in the same way, by considering $k=1$ and $k=1$ with $b=2$. Therefore $\overline{\overline{D}}$ has no branching components.
  
Finally, once we know $\overline{\overline{D}}$ is a string, we can employ the same argument as in the proof of Proposition \ref{prop:cycle} to represent the class $U$ by an embedded symplectic sphere $C_U$, so that $\overline{\overline{D}}$ becomes a toric symplectic log Calabi--Yau divisor. This proves that $D$ is sub-toric.
\end{proof}

\begin{remark}
The proof of the above proposition contains two new features in addition to the overlaps with the argument in Section \ref{Section:HJstrings}. First, without the gap-admissible condition, one must use the existence of symplectic affine rulings to show that the class $U$ has positive symplectic area. Second, because we allow affine rulings given by unicuspidal curves, one must pass to a resolution in order to obtain a fiber class and work with the total transform $\tilde{D}$. The connecting exceptional spheres $C_1$ and $C_2$ provided by Lemma \ref{lemma:connecting} may intersect components of $\tilde{D}$ that do not belong to the proper transform of $D$.
\end{remark}

\begin{proof}[Proof of Theorem \ref{thm:ruling}]
  The theorem follows immediately from Propositions \ref{prop:affineruling} and \ref{prop:recoverfromruling}.
\end{proof}

\bibliographystyle{plain}
\bibliography{mybib}

\end{document}